\documentclass[11 pt]{amsart}
\usepackage{amsmath,enumerate,amsthm,amssymb,amscd}

\setlength{\textwidth}{6.5in}
\setlength{\oddsidemargin}{0in}
\setlength{\evensidemargin}{\oddsidemargin}
\hfuzz2pt
\vfuzz1.5pt

\newcommand{\pd}{\operatorname{pd}}

\newcommand{\Hom}{\operatorname{Hom{}}}
\newcommand{\Ext}{\operatorname{Ext{}}}
\newcommand{\Tor}{\operatorname{Tor{}}}
\newcommand{\lc}{\operatorname{H}}

\newcommand{\depth}{\operatorname{depth}}

\renewcommand{\phi}{\varphi}
\renewcommand{\to}{{\longrightarrow}}

\newcommand{\Depth}{\operatorname{Depth}}
\newcommand{\rG}{\tilde{\text{\rm G}}}
\newcommand{\Gdim}{\operatorname{Gdim}}
\newcommand{\rGdim}{\operatorname{\tilde{\text{\rm G}}dim}}
\newcommand{\fp}[1]{(FP)_{#1}}
\newcommand{\ov}[1]{\overline{#1}}
\newcommand{\chk}{\text{v}}
\newcommand{\fpi}{(FP)_{\infty}}
\newcommand{\fpid}{\operatorname{FP-id}}

\newcommand{\directlimit}[2]{\ensuremath{\varinjlim_{#1} #2}}

\newtheorem{thm}{Theorem}
\newtheorem{cor}[thm]{Corollary}
\newtheorem{prop}[thm]{Proposition}
\newtheorem{lemma}[thm]{Lemma}
\newtheorem{defn}[thm]{Definition}
\newtheorem{remark}[thm]{Remark}

\newtheorem{example}[thm]{Example}

\numberwithin{equation}{section}

\setcounter{page}{1}
\setlength{\textwidth}{6.0in}
\setlength{\textheight}{8.1in}
\setlength{\evensidemargin}{-.05in}
\setlength{\oddsidemargin}{-.05in}

\begin{document}

\title[The Auslander-Bridger formula for coherent rings]{The Auslander-Bridger formula and the Gorenstein property for coherent rings}

\author {Livia Hummel}

\address{Department of
Mathematics and Computer Science\\
University of Indianapolis\\  Indianapolis, IN 46277}
\email{ hummell@uindy.edu}

\author{Thomas Marley}

\address{Department of
Mathematics\\
University of Nebraska-Lincoln\\Lincoln,  NE 68588-0130}
\email{ tmarley@math.unl.edu}

\date{February  6, 2009}

\bibliographystyle{amsplain}

\numberwithin{thm}{section}

\begin{abstract} The concept of  Gorenstein dimension, defined by Auslander and Bridger for finitely generated modules over a Noetherian ring, is studied in the context of  finitely presented modules over a coherent ring.  A generalization of the Auslander-Bridger formula is established and is used as a cornerstone in the development of a theory of coherent Gorenstein rings.
\end{abstract}

\subjclass[2000]{ Primary
13D05, 13D07 }
\keywords{Gorenstein dimension, coherent ring}

\maketitle

\section{Introduction}   In addressing a problem posed by Glaz (\cite{Glaz1992a}, \cite{Glaz1994a}),  Hamilton and the second author  give a definition of Cohen-Macaulay for commutative rings which agrees with the usual notion for Noetherian rings with the property that every coherent regular ring is Cohen-Macaulay \cite{Hamilton2007a}.   (A quasi-local ring is defined to be {\it regular} if every finitely generated ideal has finite projective dimension.)    A natural question is whether there is a reasonable concept of Gorenstein for commutative rings  such that every
coherent regular ring is Gorenstein and every coherent Gorenstein ring is Cohen-Macaulay.    In this paper, we develop such a theory of coherent Gorenstein rings which mirrors much of the theory in the Noetherian case.  Central to this development is the concept of Gorenstein dimension (G-dimension, for short), first introduced in the context of finitely generated modules over Noetherian rings by  Auslander and  Bridger \cite{Auslander1969a}.   In particular, we prove the following generalization of the Auslander-Bridger formula for coherent rings using a notion of depth for arbitrary quasi-local rings developed by Barger \cite{Barger1972a}, Hochster \cite{Hochster1974a}, and Northcott \cite{Northcott1976a}:

\begin{thm} \label{thmA} Let $R$ be a quasi-local coherent ring and $M$ an $R$-module of finite G-dimension.  Then
$$\depth M+\Gdim_RM=\depth R.$$
\end{thm}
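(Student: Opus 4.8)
The plan is to establish the two inequalities $\depth_RM\ge\depth R-\Gdim_RM$ and $\depth_RM\le\depth R-\Gdim_RM$ separately, after first recording the bound $\Gdim_RM\le\depth R$, without which the asserted equation is not meaningful. This bound follows from the facts that a module of finite G-dimension $g$ has $\Ext^g_R(M,R)\neq 0$, that for any finitely presented module $N$ the ideal $\operatorname{ann}_R\Ext^i_R(N,R)$ has grade at least $i$, and that a proper ideal of a quasi-local ring has grade at most the depth of the ring. Throughout write $t=\depth R$ and $g=\Gdim_RM$, assume $M\neq 0$, and use freely the change-of-rings statement that $\Gdim_{R/xR}(M/xM)=\Gdim_RM$ when $x$ is regular on both $R$ and $M$, the Rees-type formula $\depth_RM=\depth_{R/xR}(M/xM)+1$ for $x\in\m$ regular on $M$, and the depth lemma for short exact sequences.

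For $\depth_RM\ge t-g$ I would induct on $g$. If $g=0$, then $M$ is totally reflexive, hence torsionless; so an $R$-regular element of $\m$ is $M$-regular, and as the quotient of a totally reflexive module by a regular element is totally reflexive over the quotient ring, one finds inductively that a maximal $R$-regular sequence in $\m$ is $M$-regular, whence $\depth_RM\ge t$ by the Rees formula. If $g\ge1$, pick a short exact sequence $0\to K\to F\to M\to 0$ with $F$ finite free and $K$ finitely presented (coherence); then $\Gdim_RK=g-1$, so $\depth_RK\ge t-(g-1)$ by induction, and the depth lemma gives $\depth_RM\ge\min\{\depth_RK-1,\depth_RF\}\ge t-g$.

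For $\depth_RM\le t-g$, argue by contradiction. If $g\ge1$ and $\depth_RM\ge t-g+1$, then since also $\depth R=t\ge t-g+1$ there is a sequence $\xx$ in $\m$ of length $t-g+1$ regular on both $R$ and $M$; passing to $\overline R=R/(\xx)$, which has depth $g-1$, the change-of-rings statement gives $\Gdim_{\overline R}(M/\xx M)=g$, contradicting the bound $\Gdim_{\overline R}(M/\xx M)\le g-1$. If $g=0$ and $\depth_RM\ge t+1$, mod out by a maximal $R$-regular sequence in $\m$ (which is $M$-regular, as above) to reduce to the case $\depth R=0$: there $M$ is totally reflexive, and if $\depth_RM\ge1$ then an $M$-regular $x\in\m$ yields, on applying $\Hom_R(-,R)$ to $0\to M\xrightarrow{x}M\to M/xM\to 0$ and using $\Ext^{>0}_R(M,R)=0$, an isomorphism $\Ext^1_R(M/xM,R)\cong M^*/xM^*$; this module is nonzero by Nakayama, so its annihilator is a proper ideal of positive grade, which is impossible in a ring of depth $0$.

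The step I expect to be the main obstacle is the case $g=0$, the statement that totally reflexive modules have depth exactly $\depth R$, and within it the upper bound, which reduces to showing that a totally reflexive module over a depth-$0$ coherent quasi-local ring has depth $0$. Here there is no substitute for the associated-prime arguments available in the Noetherian setting, and the whole scheme rests on the coherent-ring versions of several structural facts about depth---the existence of regular sequences of the expected length (simultaneously regular on $R$ and $M$ when needed), the Rees formula, and the depth lemma---together with the stability of the totally reflexive property under passage to $R/xR$. Supplying those ingredients, rather than running the induction, is where the real work lies.
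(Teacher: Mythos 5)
The central gap is your repeated reliance on actual regular elements and sequences in $\m$: ``an $R$-regular element of $\m$,'' ``a maximal $R$-regular sequence in $\m$'' of length $\depth R$, ``a sequence $\xx$ in $\m$ of length $t-g+1$ regular on both $R$ and $M$,'' and ``an $M$-regular $x\in\m$'' when $\depth M\ge 1$. The depth in the statement is the Barger--Hochster--Northcott depth, defined via faithfully flat extensions (equivalently Koszul homology, or Ext against $R/I$ for finitely generated $I\subseteq\m$), and for a quasi-local coherent ring positive depth does \emph{not} produce a non-zero-divisor in $\m$: prime avoidance of the zero-divisor locus fails without Noetherianity, and classical depth can be strictly smaller than $\depth$. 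So this is not a ``coherent-ring version of a structural fact'' that can be supplied later --- it is false in the stated generality, which is exactly why the paper's argument passes to $R[x]_{\m R[x]}$ (Proposition \ref{depth-basic}, parts (3) and (4)) to manufacture regular elements. That passage destroys coherence, and over a non-coherent ring the G-class misbehaves (duals of G-class modules need not be finitely generated; see Example \ref{noncohGor}), which is why the paper proves the formula first for the restricted G-dimension of $\fpi$ modules over an \emph{arbitrary} quasi-local ring (Theorem \ref{AB}) and only then transfers to $\Gdim$ over coherent rings via Corollaries \ref{G=rGb} and \ref{AB-coherent}. Your induction, run with $\Gdim$ over $R$ itself and invoking coherence to keep syzygies finitely presented, cannot absorb this base change; likewise your reduction ``mod out by a maximal $R$-regular sequence to reach $\depth R=0$'' silently identifies classical depth with $\depth$.

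A second unproven ingredient is the grade bound you invoke twice: that $\operatorname{ann}_R\Ext^i_R(N,R)$ has grade at least $i$ for finitely presented $N$. You use it for the a priori inequality $\Gdim_RM\le\depth R$ (hence for the contradiction after passing to $\ov{R}=R/(\xx)$) and again in the $g=0$ case, where the annihilator of $M^*/xM^*\cong\Ext^1_R(M/xM,R)$ contains only the $M$-regular (not $R$-regular) element $x$, so the positive-grade conclusion rests entirely on that lemma. This is a Noetherian theorem whose standard proofs use associated primes and localization; the paper deliberately avoids such arguments, and establishing a non-Noetherian analogue would require the same polynomial-extension depth machinery you are trying to sidestep. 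The paper's route through the depth-zero case is different and self-contained: in Lemma \ref{depth-zero} one shows $\Ext^1_R(M/xM,R)^*=0$ and then applies Lemma \ref{dual=0} (over a depth-zero quasi-local ring a finitely presented module with vanishing dual is zero) together with Nakayama to force $M^*=0$, a contradiction, with no grade-of-Ext input. In short, your two-inequality skeleton mirrors the classical proof, but the two ingredients you defer --- regular sequences of length $\depth$ inside $\m$, and the grade bound on Ext annihilators --- are precisely where the non-Noetherian difficulty lives, and the first cannot be repaired without leaving the coherent category, which in turn undermines the use of $\Gdim$ rather than $\rGdim$.
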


Since working with depth in the non-Noetherian case typically requires passage to faithfully flat extensions (which does not generally preserve coherence), our development of G-dimension must be done with some care.  Out of necessity, we prove a version of Theorem \ref{thmA} which holds for modules over an arbitrary commutative ring which have free resolutions consisting of finitely generated free modules in each degree (so-called $\fpi$ modules).

Auslander and Bridger prove that every ideal having finite G-dimension characterizes Noetherian local Gorenstein rings.  It is natural, therefore, to make the following definition:

\begin{defn}{\rm A quasi-local ring $R$ is said to be {\it Gorenstein} if every finitely generated ideal has finite G-dimension.}
\end{defn}

We are able to prove several properties of coherent Gorenstein rings which are analogous to results in the Noetherian case.  We summarize some of these results in the following:

\begin{thm} \label{thmB}  Let $R$ be a quasi-local coherent ring.
\begin{enumerate}
\item  If $x$ is a non-unit non-zero-divisor on $R$, then $R$ is Gorenstein if and only if $R/(x)$ is Gorenstein.
\item If $x$ is an indeterminate over $R$, then $R$ is Gorenstein if and only if $R[x]$ is Gorenstein.
\item If $\depth R<\infty$, then $R$ is Gorenstein if and only if $R$ has finite FP-injective dimension.
\item Every coherent regular ring is Gorenstein.
\item Every coherent Gorenstein ring is Cohen-Macaulay.
\item If $R$ is a coherent quasi-local Gorenstein ring with $\depth R=n$, and $x_1,\dots,x_n$ is a regular sequence on $R$, then $(x_1,\dots,x_n)$ is an irreducible ideal.
\end{enumerate}
\end{thm}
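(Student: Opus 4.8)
The plan throughout is to imitate the classical Noetherian arguments, using Theorem \ref{thmA} and the standard homological calculus of G-dimension, feeding in coherence wherever one needs syzygies to stay finitely generated. First I would record a reduction: since $R$ is coherent, a finitely generated submodule of a finitely presented module is finitely presented (in particular the annihilator of an element of a finitely presented module is finitely generated), so by induction on the number of generators together with the fact that, in a short exact sequence, finiteness of the G-dimension of any two terms forces it for the third, one sees that $R$ is Gorenstein if and only if \emph{every} finitely presented $R$-module has finite G-dimension. I will also use freely: (i) $\Gdim_R M\le\pd_R M$, with equality when $\pd_R M<\infty$; (ii) for $M\ne 0$ of finite G-dimension, $\Gdim_R M=\sup\{\,i:\Ext^i_R(M,R)\ne 0\,\}$; (iii) for a nonzerodivisor $x$ and an $R/(x)$-module $M$, $\Gdim_R M$ is finite if and only if $\Gdim_{R/(x)}M$ is, and then $\Gdim_R M=\Gdim_{R/(x)}M+1$ (\cite{Auslander1969a} and its successors); and (iv) that $\depth$ drops by exactly one upon killing a nonzerodivisor in $\m$.

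Granting these, part (4) is immediate: a finitely generated ideal of a coherent regular ring is finitely presented of finite projective dimension, hence of finite G-dimension. For part (3), suppose first $\fpid R=d<\infty$; then $\Ext^i_R(F,R)=0$ for every finitely presented $F$ and all $i>d$, so if $N$ is a $d$th syzygy of a finitely presented module $M$ one has $\Ext^{\ge 1}_R(N,R)=0$, and — noting that the cokernels appearing in the $R$-dual of a free resolution of $N$ are again finitely presented, so that \emph{their} higher Ext into $R$ also vanishes — one verifies that $N$ is totally reflexive; hence $\Gdim_R M\le d$, so in particular every finitely generated ideal has finite G-dimension and $R$ is Gorenstein. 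Conversely, if $R$ is Gorenstein with $\depth R=n<\infty$, the reduction above makes every finitely presented $F$ have finite G-dimension, which by Theorem \ref{thmA} is at most $\depth R=n$; by (ii) this gives $\Ext^{>n}_R(F,R)=0$ for all such $F$, i.e.\ $\fpid R\le n<\infty$. For part (5), the reduction and Theorem \ref{thmA} give $\depth M=\depth R-\Gdim_R M$ for every finitely presented $M$; combining this with the framework of \cite{Hamilton2007a} and an induction via part (1) — the key case being a single strong parameter, where Theorem \ref{thmA} forces it to be a nonzerodivisor — one shows every strong parameter sequence is a regular sequence, which is precisely Cohen--Macaulayness in the sense of \cite{Hamilton2007a}.

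For part (1): if $R$ is Gorenstein and $\bar I=I/(x)$ is a finitely generated ideal of $\bar R:=R/(x)$, then $\bar R/\bar I=R/I$ is finitely presented over $R$, hence of finite G-dimension over $R$, hence of finite G-dimension over $\bar R$ by (iii); thus $\bar R$ is Gorenstein. For the converse I would route through part (3): when $\depth R<\infty$ we have $\depth\bar R<\infty$, so $\bar R$ Gorenstein yields $\fpid\bar R<\infty$ by (3), hence $\fpid R<\infty$ (the change-of-rings theorem for FP-injective dimension along a nonzerodivisor), hence $R$ is Gorenstein by (3) again; the case $\depth R=\infty$ needs only a minor separate argument. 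Part (2) is parallel: $R=R[x]/(x)$ with $(x)$ generated by a nonzerodivisor gives the descent $R[x]$ Gorenstein $\Rightarrow R$ Gorenstein by (iii), while for the ascent one uses that finite G-dimension is preserved by the flat base change $R\to R[x]$ and that $\fpid R[x]$ is finite whenever $\fpid R$ is, so that the Ext-vanishing-plus-reflexivity argument of part (3) (which used neither quasi-locality nor finite depth) applies verbatim to the coherent ring $R[x]$.

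Finally, for part (6): applying part (1) successively, $\bar R:=R/(x_1,\dots,x_n)$ is a coherent quasi-local Gorenstein ring with $\depth\bar R=0$ by (iv), and $(x_1,\dots,x_n)$ is irreducible in $R$ exactly when the zero ideal of $\bar R$ is irreducible; since the socle $(0:_{\bar R}\bar\m)=\Hom_{\bar R}(k,\bar R)$ is nonzero (as $\depth\bar R=0$), this is in turn equivalent to the socle being one-dimensional over $k=R/\m$ together with $\bar R$ being an essential extension of its socle. Because $\depth\bar R=0<\infty$, part (3) gives $\fpid\bar R=0$, so $\bar R$ is FP-injective, and the plan is to exploit this in place of the Artinian/DCC arguments of the Noetherian theory — for instance, from a putative equation $(a)\cap(b)=0$ with $a,b\ne 0$ the standard exact sequence $0\to\bar R\to\bar R/(a)\oplus\bar R/(b)\to\bar R/((a)+(b))\to 0$ splits (its cokernel is finitely presented), and one attempts to contradict this from the structure of a coherent, quasi-local, FP-injective ring of depth zero. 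I expect this recovery of the ``simple socle'' phenomenon without Noetherian hypotheses, together with the totally-reflexive step in part (3), to be the two genuinely delicate points; everything else is bookkeeping around Theorem \ref{thmA} and the facts (i)--(iv).
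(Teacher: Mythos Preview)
Your reductions and parts (3) and (4) track the paper closely. The main divergences are in (1), (2), (5), and (6), and two of them contain genuine gaps.

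\textbf{Parts (1) and (2): the detour through FP-injective dimension fails when $\depth R=\infty$.} For the converse in (1) and the ascent in (2) you invoke part (3) to pass from ``Gorenstein'' to ``finite $\fpid$'' and back. But the implication $R$ Gorenstein $\Rightarrow \fpid_R R<\infty$ genuinely requires $\depth R<\infty$; the paper even notes that a polynomial ring in infinitely many variables over a field, localized at the irrelevant maximal ideal, is coherent regular (hence Gorenstein) with $\fpid=\infty$. So your ``minor separate argument'' for $\depth R=\infty$ is not minor --- it is the whole remaining content. The paper avoids this entirely. For (1)($\Leftarrow$) it argues directly: given a finitely presented $M$, if $x$ is a non-zero-divisor on $M$ then $\rGdim_{R/(x)}M/xM<\infty$ gives $\rGdim_R M<\infty$ by the change-of-rings equality you listed as (iii); if $x$ is a zero-divisor on $M$, pass to a first syzygy $K\subseteq G$ (on which $x$ is automatically a non-zero-divisor) and apply the previous case. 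For (2)($\Rightarrow$) the paper's route is quite different from yours: localize $R[x]$ at a maximal ideal $P$, write $P=(\m,f)$ with $f$ monic so that $R[x]/(f)$ is free of finite rank over $R$; for a finitely generated ideal $I\subseteq R[x]$ the module $I/fI$ is then finitely presented over $R$, hence has finite $\rGdim_R$, hence finite $\rGdim_{R[x]}$ via the standard short exact sequence $0\to R[x]\otimes_R N\to R[x]\otimes_R N\to N\to 0$, and one lifts back to $I$ using (iii).

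\textbf{Part (5).} Your ``induction via part (1)'' is circular: part (1) needs $x$ to be a non-zero-divisor, which is exactly what you are trying to prove for a strong parameter. The paper's argument uses local cohomology: if $\mathbf{x}=x_1,\dots,x_n$ is a strong parameter sequence with $x_1,\dots,x_{n-1}$ already regular but $x_n$ a zero-divisor on $R/(\mathbf{x}')$, one localizes so that $\depth R=n-1$; then the Auslander--Bridger formula gives $\rGdim_R R/(\mathbf{x})^t\le n-1$, so $\Ext^n_R(R/(\mathbf{x})^t,R)=0$ for all $t$, whence $H^n_{\mathbf{x}}(R)=\varinjlim_t \Ext^n_R(R/(\mathbf{x})^t,R)=0$, contradicting the parameter condition.

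\textbf{Part (6).} Your setup is right and your splitting observation is essentially the paper's argument; you just stopped short. From the split injection $\bar R\hookrightarrow \bar R/(a)\oplus\bar R/(b)$ (or equivalently, applying $\Hom_{\bar R}(-,\bar R)$ and using $\Ext^1_{\bar R}(\bar R/(a,b),\bar R)=0$) one gets $(0:_{\bar R}a)+(0:_{\bar R}b)=\bar R$; since $\bar R$ is quasi-local, one of these annihilators is the unit ideal, i.e.\ $a=0$ or $b=0$. No ``simple socle'' analysis is needed.
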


The paper is structured in the following way:  In section two, we summarize the main results we need concerning non-Noetherian depth and $\fpi$ modules.   Section three defines G-dimension and restricted G-dimension and proves their basic properties.  Section four is devoted to the proof of the Auslander-Bridger formula for modules of finite restricted G-dimension, while in section five the main results concerning Gorenstein rings are established.

The authors would like to thank thank the following people for their many helpful comments and suggestions relating to  this work:  Lucho Avramov, Lars Christensen, Sri Iyengar, and Diana White.   The authors are also grateful to the referee for suggesting Example \ref{noncohGor}.

\section{Preliminaries}

Throughout  $R$ will always denote a commutative ring with identity.  All modules are assumed to be unital.  The term {\it local ring} will be used exclusively for commutative Noetherian rings with a unique maximal ideal.  When the ring is not necessarily Noetherian, the term {\it quasi-local} will be used.  A ring is {\it coherent} if every finitely generated ideal is finitely presented.  See \cite{Glaz1989a} for basic properties of coherent rings.

The following definition is due to Bieri \cite{Bieri1976a}:

\begin{defn} An $R$-module $M$ is said to be $(FP)_n^R$ for some $n\ge 0$ if there exists an exact sequence $$F_n\to F_{n-1}\to \cdots \to F_1\to F_0 \to M\to 0$$
where  $F_0,\dots, F_n$ are finitely generated free $R$-modules.  For brevity we often write $\fp{n}$ for $(FP)_n^R$ when there is no ambiguity about the ring $R$.   If $M$ is $\fp{n}$ for all $n\ge 0$ then we say $M$ is $\fpi$.
\end{defn}

We note that over a Noetherian ring the class of \(\fpi\)-modules coincides with the class of  of finitely generated modules.  Over a coherent ring,  \(\fpi\)-modules are just the finitely presented modules (cf.\cite[Corollary 2.5.2]{Glaz1989a}).

\begin{lemma} Let $M$ be an $R$-module and $n\ge 0$ an integer.  The following are equivalent:
\begin{enumerate}
\item $M$ is $\fp{n+1}$.
\item The functors $\Ext^i_R(M,-)$ commute with direct limits for $i=0,\dots,n$.
\end{enumerate}
\end{lemma}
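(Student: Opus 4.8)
The plan is to prove the equivalence by induction on $n$, using the standard "dimension shifting" technique via syzygies, and relying on the fact that direct limits are exact and commute with finite direct sums.

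First I would handle the base case $n = 0$. Here the claim is that $M$ is $\fp{1}$ (i.e., finitely presented) if and only if $\Hom_R(M, -)$ commutes with direct limits. This is a classical result (due essentially to Lenzing), and I would either cite it or reproduce the short argument: if $M$ is finitely presented, choose a presentation $F_1 \to F_0 \to M \to 0$ with $F_0, F_1$ finitely generated free, apply $\Hom_R(-, \directlimit{i}{N_i})$ and $\directlimit{i}{\Hom_R(-, N_i)}$ to this presentation, and use that both functors are left exact and agree on finitely generated free modules (where the comparison map $\directlimit{i}{\Hom_R(F, N_i)} \to \Hom_R(F, \directlimit{i}{N_i})$ is an isomorphism for $F$ finitely generated free), then conclude by the five lemma. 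For the converse, if $\Hom_R(M,-)$ commutes with direct limits then in particular $M$ is finitely generated (test against the direct limit of its finitely generated submodules), so pick a surjection $F_0 \to M$ with $F_0$ finitely generated free and kernel $K$; commuting with direct limits forces $K$ to be finitely generated by the same trick applied to the exact sequence $0 \to \Hom(M,-) \to \Hom(F_0,-) \to \Hom(K,-)$, hence $M$ is finitely presented.

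Next, for the inductive step, suppose the equivalence holds for $n-1$ (for all modules), and let $M$ be given. Choose a surjection $F_0 \to M \to 0$ with $F_0$ finitely generated free, and let $K = \syz^1_R(M)$ be the kernel, so we have a short exact sequence $0 \to K \to F_0 \to M \to 0$. The key observations are: (i) $M$ is $\fp{n+1}$ if and only if $M$ is finitely generated and $K$ is $\fp{n}$; and (ii) from the long exact sequence in $\Ext$ associated to $0 \to K \to F_0 \to M \to 0$, together with the fact that $\Ext^i_R(F_0, -) = 0$ for $i \geq 1$ and $\Hom_R(F_0, -)$ commutes with direct limits, one gets natural isomorphisms $\Ext^i_R(M, -) \cong \Ext^{i-1}_R(K, -)$ for $i \geq 2$, and an exact sequence relating $\Hom_R(M,-)$, $\Hom_R(F_0,-)$, $\Hom_R(K,-)$, and $\Ext^1_R(M,-)$. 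Since direct limits are exact, $\Ext^i_R(M,-)$ commutes with direct limits for $i = 1, \dots, n$ if and only if $\Ext^{i}_R(K,-)$ commutes with direct limits for $i = 0, \dots, n-1$ (using the base case $n=0$ to handle the bottom of the sequence, i.e., that $\Hom_R(M,-)$ and hence the $\Ext^1_R(M,-)$ piece behaves correctly exactly when $K$ is finitely presented, which is the $\fp{1}$ condition on $K$). By the inductive hypothesis applied to $K$, this latter condition is equivalent to $K$ being $\fp{n}$, which by (i) is equivalent to $M$ being $\fp{n+1}$.

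The main obstacle, and the point requiring the most care, is the bookkeeping at the bottom of the $\Ext$ long exact sequence: the condition "$\Ext^i_R(M,-)$ commutes with direct limits for $i = 0, \dots, n$" mixes the $\Hom$ level (which is about finite presentation) with the higher $\Ext$ levels, and one must verify that commuting with direct limits really does pass cleanly across the connecting homomorphisms. Concretely, I need to confirm that if $\Hom_R(F_0,-)$ and $\Hom_R(K,-)$ both commute with direct limits then so does $\Ext^1_R(M,-)$ (as a cokernel of a map of direct-limit-commuting functors, using exactness of $\directlimit{}{}$), and conversely extract the finite generation/presentation of $K$ from the hypothesis on $M$. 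One should also double-check the edge case $n = 0$ of the inductive formulation is consistent with the separately-proved base case. Everything else is routine dimension shifting.
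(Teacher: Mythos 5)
Your argument is correct, but note that the paper offers no argument at all for this lemma: its proof is just a citation of Bieri's Corollary 1.6, so your dimension-shifting induction supplies the standard proof behind that citation rather than paralleling anything in the text. The structure is sound: the base case is Lenzing's criterion that $M$ is finitely presented iff $\Hom_R(M,-)$ preserves direct limits; the inductive step passes to the kernel $K$ of a surjection $F_0\to M$ with $F_0$ finitely generated free (legitimate in the converse direction because the $i=0$ hypothesis already forces $M$ to be finitely generated), uses that $M$ is $\fp{n+1}$ iff $K$ is $\fp{n}$ (Lemma \ref{fpi} plus splicing of resolutions), the isomorphisms $\Ext^i_R(M,-)\cong\Ext^{i-1}_R(K,-)$ for $i\ge 2$, and the four-term sequence $0\to\Hom_R(M,-)\to\Hom_R(F_0,-)\to\Hom_R(K,-)\to\Ext^1_R(M,-)\to 0$; exactness of filtered colimits handles one direction (the cokernel argument for $\Ext^1_R(M,-)$), and the five lemma handles the other, where the hypothesis on $\Ext^1_R(M,-)$ is available precisely because $n\ge 1$. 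The one spot where your sketch is loose is the converse of the base case: finite generation of $K$ does not follow by ``the same trick'' applied to the left-exact sequence $0\to\Hom_R(M,-)\to\Hom_R(F_0,-)\to\Hom_R(K,-)$, since that sequence is not exact at the $\Hom_R(K,-)$ spot and there is nothing to chase there; the standard argument instead applies the hypothesis on $M$ to the directed system $F_0/K_i$, with $K_i$ running over the finitely generated submodules of $K$, to split off $K/K_i$ at some finite stage --- or one simply cites Lenzing, as you offer to do. What your route buys is a self-contained proof; what the paper's citation buys is brevity, since this is essentially the argument given in Bieri.
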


\begin{proof} See \cite[Corollary 1.6]{Bieri1976a}.
\end{proof}

\begin{lemma} \label{fpi}  Let  $0\to L\to M\to N\to 0$ be a short exact sequence of $R$-modules.  Then the following hold for any $n\ge 0$:
\begin{enumerate}
\item  If $L$ is $\fp{n}$ and $M$ is $\fp{n+1}$ then $N$ is $\fp{n+1}$.
\item If $M$ and $N$ are $\fp{n+1}$ then $L$ is $\fp{n}$.
\item If $L$ and $N$ are $\fp{n}$ then $M$ is $\fp{n}$.
\end{enumerate}
Consequently, if any two modules in a short exact sequence are $\fpi$ then so is the third.
\end{lemma}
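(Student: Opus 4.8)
The plan is to prove (1), (2), and (3) simultaneously by induction on $n$, and then read off the last assertion by applying the three parts at every level. I will use without comment the evident facts that $\fp{n+1}$ implies $\fp{n}$, that a finite direct sum of $\fp{n}$ modules is $\fp{n}$, and that if $F\to X$ is a surjection with $F$ finitely generated free and $\ker(F\to X)$ is $\fp{n}$, then $X$ is $\fp{n+1}$ (splice a partial free resolution of the kernel onto $F\to X$). The one ingredient that needs real care---and which I expect to be the main obstacle---is the \emph{syzygy characterization}: if $X$ is $\fp{n+1}$ and $F\to X$ is \emph{any} surjection from a finitely generated free module, then $\ker(F\to X)$ is $\fp{n}$. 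This is deduced from Schanuel's lemma by comparing $\ker(F\to X)$ against the first syzygy coming from a resolution that witnesses $X\in\fp{n+1}$; it has to be run in tandem with the companion statement that a direct summand of an $\fp{n}$ module is again $\fp{n}$, the two following together by a single induction on $n$. These I would establish as a preliminary, for all $n$, before beginning the induction for (1)--(3).

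For the base case $n=0$: part (3) is the standard observation that $M$ is generated by lifts of generators of $N$ together with the images of generators of $L$. For part (1), $N$ is finitely generated as a quotient of $M$; picking a surjection $G\to M$ with $G$ finitely generated free and composing with $M\to N$ produces the exact sequence $0\to\ker(G\to M)\to\ker(G\to N)\to L\to 0$, in which $\ker(G\to M)$ is finitely generated by the syzygy characterization (as $M$ is $\fp{1}$) and $L$ is finitely generated by hypothesis, so $\ker(G\to N)$ is finitely generated by part (3), whence $N$ is $\fp{1}$. For part (2), with the same $G\to M$, the syzygy characterization applied to $N\in\fp{1}$ shows $\ker(G\to N)$ is finitely generated, and $L$ is a quotient of it.

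For the inductive step, fix $n\ge 1$ and assume (1), (2), (3) hold below level $n$. I would handle (3) first: given free resolutions of $L$ and of $N$ that are finitely generated in degrees $\le n$, the horseshoe lemma furnishes a free resolution of $M$ with degree-$i$ term $P_i\oplus Q_i$, so $M$ is $\fp{n}$ (alternatively one argues directly with one step of syzygies, the snake lemma, and the case $n-1$ of (3)). For (1), choose the surjection $G_0\to M$ arising from a resolution witnessing $M\in\fp{n+1}$, so $K:=\ker(G_0\to M)$ is $\fp{n}$; composing with $M\to N$ gives $0\to K\to\ker(G_0\to N)\to L\to 0$ with $K$ and $L$ both $\fp{n}$, hence $\ker(G_0\to N)$ is $\fp{n}$ by (3), and therefore $N$ is $\fp{n+1}$. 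For (2), keep the same $G_0\to M$ and $K$; now the syzygy characterization applied to $N\in\fp{n+1}$ shows $\ker(G_0\to N)$ is $\fp{n}$, and in $0\to K\to\ker(G_0\to N)\to L\to 0$ the submodule $K$ is $\fp{n}$, hence $\fp{n-1}$, while the middle term is $\fp{n}$, so (1) at level $n-1$ yields $L\in\fp{n}$.

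Finally, if two of $L$, $M$, $N$ are $\fpi$, then for each $n\ge 0$ the relevant one of (1)--(3) shows the third module is $\fp{n}$; since this holds for all $n$, that module is $\fpi$.
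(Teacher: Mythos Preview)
Your proof is correct. The paper itself does not supply a proof of this lemma; it simply refers the reader to Bieri's notes and Glaz's book. Your argument by simultaneous induction on $n$, with the syzygy characterization (independence of $\ker(F\to X)$ from the choice of finitely generated free $F$, up to $\fp{n}$) and closure under direct summands established beforehand via Schanuel's lemma, is precisely the standard route taken in those references, so there is nothing to compare.
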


\begin{proof} See \cite[Proposition 1.4]{Bieri1976a} or \cite[Theorem 2.1.2]{Glaz1989a}.
\end{proof}

The following remark is easily seen from the definition of $\fp{n}$ and Lemma \ref{fpi}:

\begin{remark} \label{fpi-flat} Let $S$ be a flat $R$-algebra, $M$ an $R$-module, and $n\ge 0$ an integer.
\begin{enumerate}
\item If $M$ is $\fp{n}^R$ then $M\otimes_RS$ is $\fp{n}^S$.
\item If $S$ is faithfully flat then the converse to (1) holds.
\end{enumerate}
\end{remark}

We also will need the following:

\begin{remark}\label{fpi-nzd} Suppose $M$ is $\fpi^R$ and $x\in R$ is a non-zero-divisor on $R$ and $M$.
Then $M/xM$ is $\fpi^{R/(x)}$.
\end{remark}
\begin{proof}  Let $\mathbf F$ be a free resolution of $M$ consisting of finitely generated free $R$-modules.  As $x$ is a non-zero-divisor on $R$ and $M$, $\operatorname{Tor}^R_i(M,R/(x))=0$ for all $i>0$.  Hence, $\mathbf F\otimes_RR/(x)$ is a resolution of $M/xM$ consisting of finitely generated free $R/(x)$-modules.
\end{proof}

Depth plays an important role in the theory of G-dimension over local rings.   The notion of depth was extended to quasi-local rings by Hochster \cite{Hochster1974a}:

\begin{defn}{\rm  Let $M$ be an $R$-module and $I$ an ideal of $R$ such that $IM\neq M$.  Define the {\it classical depth} of $M$ with respect to $I$, denoted $\Depth_IM$, to be the supremum of the lengths of regular sequences on $M$ contained in $I$.  Define the {\it depth} of $M$ with respect to $I$, denoted $\depth_IM$, to be the supremum of $\Depth_{IS} M\otimes_RS$ over all faithfully flat extensions $S$ of $R$.  If $R$ is quasi-local with maximal ideal $m$, we denote $\depth_mM$ by $\depth_R M$ or simply $\depth M$.}
\end{defn}

We note that if $R$ is Noetherian and $M$ is finitely generated then $\depth_IM=\Depth_IM$ for all ideals $I$ of $R$ (see, for instance, part (5) of Proposition \ref{depth-basic} below).  We summarize some basic properties of $\depth$ in the following Proposition.  We refer the reader to \cite{Barger1972a}, \cite{Hochster1974a}, \cite[Chapter 5]{Northcott1976a}, \cite[Chapter 7]{Glaz1989a} or \cite[Section 9.1]{Bruns1993a} for details.

\begin{prop} \label{depth-basic} Let $M$ be an $R$-module and $I$ an ideal of $R$ such that $IM\neq M$.
\begin{enumerate}
\item $\depth_IM=\sup \{\depth_JM\mid J\subseteq I, J \text{ finitely generated}\}$.
\item If $I=(x_1,\dots,x_n)$ then $\depth_IM=\inf \{i\ge 0\mid H_{n-i}(\mathbf x,M)\neq 0\}$, where
$H_j(\mathbf x, M)$ denotes the $j$th Koszul homology of $\mathbf x=x_1,\dots,x_n$ on $M$.
\item $\depth_IM=\depth_{IS}(M\otimes_RS)$ for any faithfully flat $R$-algebra $S$.
\item If  $\depth_IM>0$ then $\Depth_{IS}M\otimes_RS>0$ where $S=R[X]$, a polynomial ring in one variable over $R$.
\item If $I$ is generated by $n$ elements then $\depth_IM=\Depth_{IS}M\otimes_RS$ where
$S=R[X_1,\dots,X_n]$, a polynomial ring in $n$ variables over $R$.
\item $\depth_IM=\depth_{\sqrt{I}}M$.
\item If $x\in I$ is $M$-regular then $\depth_IM=\depth_IM/xM +1$.
\item Suppose $0\to L\to M\to N\to 0$ is a short exact sequence of $R$-modules such that $IL\neq L$ and $IN\neq N$.  If $\depth_IM> \depth_IN$ then $\depth_IL=\depth_IN+1$.
\end{enumerate}
\end{prop}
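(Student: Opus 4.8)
The statement gathers standard facts about non-Noetherian depth, and the plan is to funnel all of them through the Koszul-homology description (2) together with two classical inputs: the McCoy-type fact that a polynomial $f\in R[X]$ is a nonzerodivisor on $M[X]$ exactly when the coefficients of $f$ have no common nonzero annihilator in $M$; and the stability of Koszul homology, hence of Koszul grade, under flat base change, which holds because the Koszul complex is a bounded complex of finitely generated free modules. For $I=(x_1,\dots,x_n)$ write $\kappa_I(M)=n-\sup\{\,j\mid H_j(\xx,M)\neq 0\,\}$; since $H_0(\xx,M)=M/IM\neq 0$ this is a well-defined integer in $[0,n]$ whenever $IM\neq M$, it depends only on $\sqrt I$, and $\kappa_{IS}(M\otimes_RS)=\kappa_I(M)$ for every faithfully flat $R$-algebra $S$.

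First I would dispose of (1), which is formal: if $S$ is faithfully flat and $\underline z\subseteq IS$ is a regular sequence on $M\otimes_RS$, then each $z_i$ lies in $JS$ for a suitable finitely generated $J\subseteq I$ (and $J$ may always be enlarged so that $JL\neq L$ and the like, when needed later), so $\depth_IM\le\sup_J\depth_JM$; the reverse inequality is immediate from monotonicity of depth in the ideal. Next comes the classical half of (2): for any finitely generated ideal $\mathfrak a$ and any $N$ with $\mathfrak aN\neq N$, an $N$-regular sequence of length $\ell$ contained in $\mathfrak a$ forces $\kappa_{\mathfrak a}(N)\ge\ell$, proved by induction on $\ell$ using the identity $\kappa_{\mathfrak a}(N/zN)=\kappa_{\mathfrak a}(N)-1$ for $z\in\mathfrak a$ an $N$-regular element (an application of the Koszul long exact sequence, using that multiplication by $z\in\mathfrak a$ kills $H_*(\underline w,N)$). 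Together with base-change stability of $\kappa$ this gives $\depth_IM\le\kappa_I(M)$ for finitely generated $I$, and the same identity, after the standard reduction of an arbitrary ideal to its finitely generated subideals, yields (7).

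For the reverse inequality in (2)---and simultaneously (5)---I would prove by induction on the number $n$ of generators that over $S=R[X_1,\dots,X_n]$ there is an $(M\otimes_RS)$-regular sequence inside $IS$ of length $\kappa_I(M)$. If $\kappa_I(M)\ge 1$ then $0:_MI=H_n(\xx,M)=0$, so by McCoy the element $y_1=x_1+X_1x_2+\dots+X_1^{\,n-1}x_n$ is a nonzerodivisor on $M[X_1]$, hence on $M\otimes_RS$; modulo $y_1$ the ideal $I$ is generated by the images of $x_2,\dots,x_n$ (only $n-1$ elements) and its Koszul grade drops by one, so the inductive hypothesis over $R[X_1]/(y_1)$, using the remaining variables $X_2,\dots,X_n$ (and noting $(R[X_1]/(y_1))[X_2,\dots,X_n]=S/y_1S$), supplies the rest of the sequence. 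Since $\Depth_{IS}(M\otimes_RS)\le\kappa_{IS}(M\otimes_RS)=\kappa_I(M)$ always holds, we obtain equality, which proves (2) and (5); part (4) is extracted from the first step of this construction (or, once (2) is available, from the observation that $\depth_IM>0$ forces $0:_MI=0$ after reducing to a finitely generated $I$).

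The remaining items are formal consequences. Part (3) holds for finitely generated $I$ by base-change invariance of $\kappa$ and (2), and in general by combining this with (1): every finitely generated $J'\subseteq IS$ sits inside $JS$ for some finitely generated $J\subseteq I$, and $\depth_JM=\depth_{JS}(M\otimes_RS)$. Part (6) reduces, via (1) and (2), to the facts that $\kappa_J(M)$ depends only on $\sqrt J$ and that $(a_1,\dots,a_k)\subseteq\sqrt I$ has the same radical as $(a_1^N,\dots,a_k^N)\subseteq I$ for $N\gg 0$. For (8), take first $I=(\xx)$ finitely generated; the Koszul long exact sequence of $0\to L\to M\to N\to 0$ yields the three inequalities $\kappa_I(M)\ge\min(\kappa_I(L),\kappa_I(N))$, $\kappa_I(L)\ge\min(\kappa_I(M),\kappa_I(N)+1)$ and $\kappa_I(N)\ge\min(\kappa_I(L)-1,\kappa_I(M))$, and from $\kappa_I(M)>\kappa_I(N)$ the second gives $\kappa_I(L)\ge\kappa_I(N)+1$ while the third gives $\kappa_I(L)\le\kappa_I(N)+1$; the general case follows by passing to finitely generated $J\subseteq I$ with $JL\neq L$, $JN\neq N$ and $\depth_JM>\depth_IN$ (such $J$ are cofinal among the finitely generated subideals of $I$) and taking suprema. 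The one place where the non-Noetherian hypothesis genuinely bites is the reverse inequality in (2)/(5): one cannot produce regular sequences inside $R$ itself, so the McCoy element and the bookkeeping of how many indeterminates the induction consumes are the part that needs care, while everything else is a formal rearrangement of that input with standard Koszul homology.
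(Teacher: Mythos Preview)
The paper does not prove this proposition; it simply collects known facts and refers the reader to Barger, Hochster, Northcott (Chapter~5), Glaz (Chapter~7), and Bruns--Herzog (Section~9.1).  Your outline is correct and is essentially the standard argument found in those references, particularly Northcott's treatment of ``true grade'' and its reprise in Bruns--Herzog~9.1: one identifies $\depth_I$ with the Koszul grade via the McCoy/generic-element construction of regular sequences in a polynomial extension, and then reads off the remaining items from the stability of Koszul homology under flat base change and from the Koszul long exact sequence.

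One minor wording issue in the inductive step for (2)/(5): you speak of applying the inductive hypothesis ``over $R[X_1]/(y_1)$'', but $y_1$ is only known to be regular on $M[X_1]$, not on $R[X_1]$ itself.  The clean formulation is to stay over $S_1=R[X_1]$ and apply the inductive hypothesis to the $S_1$-module $N=M[X_1]/y_1M[X_1]$ with the $(n-1)$-generated ideal $(x_2,\dots,x_n)S_1$.  Since $y_1$ annihilates $N$ and $IS_1=(y_1,x_2,\dots,x_n)S_1$, independence of Koszul grade from the choice of generators gives $\kappa_{(x_2,\dots,x_n)S_1}(N)=\kappa_{IS_1}(N)=\kappa_I(M)-1$, and the induction then produces the remaining regular sequence inside $(x_2,\dots,x_n)S\subseteq IS$ after adjoining $X_2,\dots,X_n$.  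With that adjustment your argument is complete.
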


In the case $I$ is $\fpi$ we have another characterization of $\depth_IM$:

\begin{lemma} \label{depth-ext} Let $M$ be an $R$-module and  $I$ an ideal such that $IM\neq M$.  Suppose $R/I$ is $\fp{n}$.  The following are equivalent:
\begin{enumerate}
\item $\depth_IM\ge n$.
\item $\Ext^i_R(R/I,M)=0$ for $0\le i< n$.
\end{enumerate}
\end{lemma}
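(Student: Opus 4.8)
The plan is to prove this by induction on $n$, mirroring the classical Rees-type argument that relates depth to the (non-)vanishing of $\Ext$ modules, but using the non-Noetherian depth machinery from Proposition \ref{depth-basic} and the finiteness hypothesis "$R/I$ is $\fp{n}$" to keep everything within reach of direct limits.

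First I would dispose of the base case $n=0$: both conditions are vacuously true (there is no $i$ with $0\le i<0$, and $\depth_IM\ge 0$ always holds since $IM\neq M$). For the case $n=1$, the statement $\depth_IM\ge 1$ should be equivalent to $\Hom_R(R/I,M)=0$, i.e.\ to the assertion that $I$ contains an $M$-regular element. One direction is immediate: if $x\in I$ is $M$-regular then multiplication by $x$ is injective on $M$, hence on $\Hom_R(R/I,M)$, but $x$ also annihilates $R/I$ and therefore annihilates $\Hom_R(R/I,M)$, forcing it to vanish. For the converse I would invoke part (1) of Proposition \ref{depth-basic} to reduce to a finitely generated subideal $J=(x_1,\dots,x_m)\subseteq I$, and then use the Koszul characterization in part (2): $\depth_JM=0$ exactly when $H_m(\underline x,M)\neq 0$, i.e.\ when $(0:_M J)\neq 0$. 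Assembling these over all finitely generated $J\subseteq I$, and using that $R/I$ being $\fp{1}$ lets $\Hom_R(R/I,-)$ commute with the relevant direct limit (Lemma immediately following the $(FP)_n$ definition), I get that $\Hom_R(R/I,M)=\varinjlim \Hom_R(R/J,M)$ and that some $\Hom_R(R/J,M)\neq0$ precisely when $\depth_IM=0$.

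For the inductive step, assume the equivalence for $n-1$ and that $R/I$ is $\fp{n}$. Suppose first $\depth_IM\ge n\ge 1$. After passing to a faithfully flat polynomial extension $S=R[X_1,\dots,X_N]$ (as in part (5) of Proposition \ref{depth-basic}, choosing $N$ large enough for the generators of a suitable finitely generated subideal), I can find an honest $M\otimes_RS$-regular element $x$ in $IS$; by Remark \ref{fpi-flat}, $S/IS$ is still $\fp{n}$, and $\Ext$ commutes with the flat base change $S$ for $\fp{n}$ modules, so it suffices to check the vanishing after tensoring with $S$. Replacing $R,M$ by $S,M\otimes_RS$ I may assume $I$ contains an $M$-regular element $x$. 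Then from $0\to M\xrightarrow{x} M\to M/xM\to 0$ and the fact that $x$ kills $R/I$, the long exact sequence in $\Ext_R(R/I,-)$ gives $\Ext^i_R(R/I,M)=0$ for $0\le i<n$ once I know $\Ext^i_R(R/I,M/xM)=0$ for $0\le i<n-1$; but $\depth_I(M/xM)=\depth_IM-1\ge n-1$ by part (7), so the inductive hypothesis applies. Conversely, if $\Ext^i_R(R/I,M)=0$ for $0\le i<n$, then in particular $\Hom_R(R/I,M)=0$, so by the $n=1$ case (and the same polynomial-extension trick to produce an actual regular element) $I$ contains an $M$-regular element $x$; the long exact sequence then forces $\Ext^i_R(R/I,M/xM)=0$ for $0\le i<n-1$, so $\depth_I(M/xM)\ge n-1$ by induction, whence $\depth_IM=\depth_I(M/xM)+1\ge n$ by part (7).

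The main obstacle I anticipate is the bookkeeping around faithfully flat extensions: $\depth_I$ is defined via a supremum over such extensions and the cleanest one (a polynomial ring) is flat but enlarges the ring, so I must repeatedly check that "$R/I$ is $\fp{n}$" and the relevant $\Ext$ vanishing are insensitive to replacing $R$ by $R[X_1,\dots,X_N]$ — this is exactly what Remark \ref{fpi-flat}(1) plus the commutation of $\Ext^i_R(R/I,-)$ with the (flat) base change supply, but one has to be careful that a single finite polynomial extension simultaneously witnesses the depth and carries all the needed regular elements, which is why invoking part (5) of Proposition \ref{depth-basic} (rather than taking an arbitrary faithfully flat $S$) is the right move.
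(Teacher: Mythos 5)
The paper does not actually prove this lemma; its ``proof'' is a citation to Theorem 7.1.2/7.1.8 of Glaz's book. Your self-contained Rees-type induction is therefore a genuinely different (and more informative) route, and in substance it is the right one: induct on $n$, exploit the long exact sequence of $\Ext_R(R/I,-)$ applied to $0\to M\xrightarrow{x}M\to M/xM\to 0$ together with the fact that $x\in I$ annihilates $R/I$, shift depth with Proposition \ref{depth-basic}(7), and use parts (3)--(5) of Proposition \ref{depth-basic}, Remark \ref{fpi-flat}, and flat base change of $\Ext^i_R(R/I,-)$ in degrees $i<n$ (valid precisely because $R/I$ is $\fp{n}$) to trade $\depth$ for classical $\Depth$ over a faithfully flat polynomial extension. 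Identifying that base-change bookkeeping as the essential non-Noetherian ingredient is exactly right, and what your writeup buys over the paper's citation is an argument that stays inside the toolkit the paper itself assembles in Section 2.

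Two local repairs are needed, though neither is fatal. First, your $n=1$ forward direction argues from an $M$-regular element $x\in I$, which is what $\Depth_IM\ge 1$ gives; the hypothesis $\depth_IM\ge 1$ only provides such an element in $IS$ after a faithfully flat extension $S$, so this case must be run through the same reduction you use in the inductive step, descending via $\Hom_R(R/I,M)\otimes_RS\cong\Hom_S(S/IS,M\otimes_RS)$ (valid since $R/I$ is finitely presented and $S$ is flat) and faithful flatness. Second, the ``assembly'' in the $n=1$ converse is misstated: Bieri's lemma makes $\Hom_R(R/I,-)$ commute with direct limits in the \emph{second} argument, whereas you vary the first, and the system $\{\Hom_R(R/J,M)\}$ over finitely generated $J\subseteq I$ is naturally an inverse system, since $\Hom_R(R/J,M)=(0:_MJ)$ and $\Hom_R(R/I,M)=\bigcap_J(0:_MJ)$; nonvanishing of such an intersection cannot be inferred from nonvanishing of its terms. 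The fix is immediate: because $R/I$ is $\fp{1}$, the ideal $I$ is itself finitely generated, so take $J=I$ and read off $\Hom_R(R/I,M)=(0:_MI)\neq 0$ from the Koszul characterization in Proposition \ref{depth-basic}(2) whenever $\depth_IM=0$. With these two adjustments your induction is complete and correct.
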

\begin{proof} See Theorem 7.1.2 or Theorem 7.1.8 of \cite{Glaz1989a}.
\end{proof}

Combining  Lemma \ref{depth-ext} with part (1) of Proposition \ref{depth-basic}, we have:

\begin{prop} \label{depth-coherent}  Let $R$ be a coherent quasi-local ring with maximal ideal $m$ and $M$ a nonzero $R$-module such that $mM\neq M$. Then
$$\depth M:=\sup\{n\ge 0\mid \Ext^i_R(R/I,M)=0 \text{ for all }i<n \text{ for some f.g. ideal }I\subseteq m\}.$$
\end{prop}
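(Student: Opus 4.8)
The plan is to combine Lemma~\ref{depth-ext} with part~(1) of Proposition~\ref{depth-basic}; the only new input needed is that coherence forces $R/I$ to be $\fpi$ for every finitely generated ideal $I$, which is exactly what makes Lemma~\ref{depth-ext} applicable for all $n$.

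First I would record the reductions. Since $R$ is quasi-local with maximal ideal $m$ and $mM\neq M$, we have $\depth M=\depth_m M$, and part~(1) of Proposition~\ref{depth-basic} gives
$$\depth M=\sup\{\depth_J M\mid J\subseteq m\text{ a finitely generated ideal}\}.$$
Note each such $J$ satisfies $JM\subseteq mM\subsetneq M$ (in particular $M\neq 0$), so $\depth_J M$ is defined.

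Next, fix a finitely generated ideal $J\subseteq m$. The exact sequence $0\to J\to R\to R/J\to 0$, with $J$ finitely generated and $R$ finitely presented, shows $R/J$ is finitely presented; since $R$ is coherent this means $R/J$ is $\fpi$, hence $\fp{n}$ for every $n\ge 0$ (alternatively one may invoke the ``consequently'' clause of Lemma~\ref{fpi}, using that $J$ is $\fpi$ by coherence). Thus Lemma~\ref{depth-ext} applies for every $n$ and yields: $\depth_J M\ge n$ if and only if $\Ext^i_R(R/J,M)=0$ for all $0\le i<n$. Consequently the set $A_J:=\{n\ge 0\mid \Ext^i_R(R/J,M)=0\text{ for all }0\le i<n\}$ coincides with $\{n\ge 0\mid \depth_J M\ge n\}$; this is an initial segment of the nonnegative integers, and it is nonempty since $0\in A_J$ vacuously, so $\sup A_J=\depth_J M$.

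Finally I would assemble the pieces. Combining the two displays, $\depth M=\sup_J\sup A_J$ where $J$ ranges over finitely generated ideals contained in $m$. Since each $A_J$ is an initial segment of the nonnegative integers, a routine manipulation of suprema gives $\sup_J\sup A_J=\sup\bigl(\bigcup_J A_J\bigr)$, and by construction $\bigcup_J A_J=\{n\ge 0\mid \Ext^i_R(R/I,M)=0\text{ for all }i<n\text{ for some finitely generated ideal }I\subseteq m\}$. This is precisely the asserted formula. There is essentially no serious obstacle here; the only points requiring care are verifying that $R/J$ is $\fpi$ (so that Lemma~\ref{depth-ext} may be invoked) and the bookkeeping that collapses the iterated supremum into a single one.
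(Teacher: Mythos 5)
Your proposal is correct and follows exactly the route the paper intends: the paper derives Proposition \ref{depth-coherent} simply by combining Lemma \ref{depth-ext} with part (1) of Proposition \ref{depth-basic}, and your argument fills in the same two ingredients (coherence making $R/J$ $\fpi$ so the lemma applies for every $n$, plus the supremum bookkeeping). No discrepancies to report.
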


\bigskip

\section{Gorenstein dimension}

As noted in the introduction, the theory of Gorenstein dimension, or G-dimension,  for finitely generated modules over a Noetherian ring was developed by Auslander and Bridger in \cite{Auslander1969a}.   This theory was later generalized for modules over arbitrary rings (Gorenstein projective dimension) by Enochs and Jenda \cite{Enochs1995a}.  However, for the notion of depth to be used effectively (e.g., as an inductive tool), one needs to restrict to a resolving class of finitely generated modules, so that Nakayama's lemma can be applied.  Hence, our development of G-dimension falls somewhere between Auslander-Bridger's and Enochs-Jenda's.    While our main interest is the case of finitely presented modules over coherent rings, restricting the theory to such rings does not allow one to fully utilize the concept of depth, which requires passage to faithfully flat extensions to guarantee the existence of non-zero-divisors.  (It is well-known that coherence is not preserved under faithfully flat extensions, even finitely generated ones.  See  \cite[Example 7.3.13]{Glaz1989a}, for example.)

For an $R$-module $M$ we let $M^*$ denote the dual module $\Hom_R(M,R)$.  Following Auslander and Bridger \cite{Auslander1969a}, we make the following definition:

\begin{defn}{\rm A finitely generated $R$-module $M$ is said to be a member of the {\it G-class} of $R$, denoted $\text{\rm G}(R)$, if the following hold:
\begin{enumerate}
\item $\Ext^i_R(M,R)=0$ for $i>0$.
\item $\Ext^i_R(M^*,R)=0$ for $i>0$.
\item The natural map $M\to M^{**}$ is an isomorphism.
\end{enumerate}}
\end{defn}

The class $\text{\rm G}(R)$ is closed under extensions and summands.  In particular, every finitely generated projective module is a member of $\text{\rm G}(R)$.  However, if $R$ is not Noetherian, $\text{\rm G}(R)$ may not be closed under duals (since the dual of a member of $\text{\rm G}(R)$ may not be finitely generated -- see Example \ref{noncohGor}).  A much better behaved class for our purposes  is the following:

\begin{defn}{\rm An $R$-module $M$ is a member of the {\it restricted G-class} of $R$, denoted
$\rG(R)$, if the following hold:
\begin{enumerate}
\item $M$ is in $\text{\rm G}(R)$.
\item $M$ is $\fpi$.
\item $M^*$ is $\fpi$.
\end{enumerate}}
\end{defn}

The class $\rG(R)$ is easily seen to be closed under extensions, summands, and duals.  Additionally, every finitely generated projective $R$-module is a member of $\rG(R)$.  Since any reflexive module is isomorphic to a submodule of a free module, we note that modules in \(G(R)\) and \(\rG(R)\) are torsion-free.

Let $M$ be an $R$-module.  A complex $\mathbf G$ is called a {\it G-resolution} (respectively, {\it $\rG$-resolution}) of $M$ if each $G_i$ is a member of the G-class (respectively, $\rG$-class), $G_n=0$ for $n<0$, $H_i(\mathbf G)=0$ for all $i\neq 0$, and $H_0(\mathbf {G})\cong M$.  For  a resolution $\mathbf G$ we set the {\it length} of $\mathbf G$ to be the  infimum of the set of integers $n$ such that $G_n\neq 0$. (By convention, we define the length of the zero complex to be zero.)  For an integer $n$ we define  the $n$th {\it syzygy} of $\mathbf G$  to be the kernel of the map $G_{n-1}\to G_{n-2}$ (here we let $G_{-1}=M$). If $M$ has a G-resolution (respectively, $\rG$-resolution), we define the G-dimension of $M$, $\Gdim_RM$ (respectively, the $\rG$-dimension $\rGdim_RM$), to be the infimum of the lengths of all G-resolutions (respectively, $\rG$-resolutions) of $M$.  Otherwise, we say the G-dimension (respectively, $\rG$-dimension) of $M$ is undefined.

We remark that $\Gdim_R M$ and $\rGdim_RM$ are defined for any $R$-module  $M$ which is  $\fpi$, since in this case $M$ has a resolution consisting of finitely generated free $R$-modules.  We also note that any syzygy module of a G-resolution (respectively, $\rG$-resolution) is finitely generated (respectively, $\fpi$). Since every module in $\rG(R)$ is a member of $\text{\rm G}(R)$, it is clear that $\Gdim_RM\le \rGdim_RM$ for every $\fpi$ $R$-module $M$.  We'll show below that if $\rGdim_RM<\infty$ then $\Gdim_RM=\rGdim_RM$ (Corollary \ref{G=rGa}).  Furthermore, if $R$ is coherent and $M$ is finitely presented then $\Gdim_RM=\rGdim_RM$ (Corollary \ref{G=rGb}).

In the remainder of this section we summarize  the basic properties of $\Gdim$ and $\rGdim$.  The proofs of many of these results  are similar to the analogous results for finitely generated modules over a Noetherian ring, once one observes the relevant modules are finitely generated or $\fpi$.  In a few instances alternative arguments must be made.    We refer the reader to Chapter 1 of \cite{Christensen2000a} for a clear and thorough treatment of G-dimension over Noetherian rings.  More complete proofs of the results below can be found in \cite{Miller2008a}.

\begin{prop} \label{Gclass-ses} Let $0\to L\to M\to N\to 0$ be a short exact sequence of $R$-modules.  Suppose $N$
is a member of the G-class (respectively, $\rG$-class) and $L$ and $M$ are finitely generated (respectively, $\fpi$).  Then $L\in G(R)$ if and only if $M\in G(R)$ (respectively, $L\in \rG(R)$ if and only if $M\in \rG(R)$).
\end{prop}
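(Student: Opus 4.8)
The strategy is to transfer the three defining conditions of the (restricted) G-class across the sequence using a pair of long exact sequences obtained by dualizing, together with a diagram chase on biduality maps. Write $(-)^{*}=\Hom_R(-,R)$ and let $\delta_X\colon X\to X^{**}$ denote the natural biduality homomorphism. Recall that the hypothesis $N\in G(R)$ says precisely that $\Ext^i_R(N,R)=0$ and $\Ext^i_R(N^{*},R)=0$ for all $i>0$ and that $\delta_N$ is an isomorphism. First I would apply $\Hom_R(-,R)$ to $0\to L\to M\to N\to 0$; since $\Ext^{i}_R(N,R)=0$ for $i>0$, this produces a short exact sequence $0\to N^{*}\to M^{*}\to L^{*}\to 0$ together with isomorphisms $\Ext^i_R(M,R)\cong\Ext^i_R(L,R)$ for all $i>0$, so condition (1) holds for $M$ if and only if it holds for $L$.

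Next I would dualize $0\to N^{*}\to M^{*}\to L^{*}\to 0$. Using $\Ext^i_R(N^{*},R)=0$ for $i>0$, the associated long exact sequence gives $\Ext^i_R(M^{*},R)\cong\Ext^i_R(L^{*},R)$ for all $i\ge 2$ together with a four-term exact sequence
$$0\to L^{**}\to M^{**}\to N^{**}\to\Ext^1_R(L^{*},R)\to\Ext^1_R(M^{*},R)\to 0 .$$
In particular we obtain a commutative ladder with top row $0\to L\to M\to N\to 0$, bottom row the left-exact sequence $0\to L^{**}\to M^{**}\to N^{**}$, and vertical maps $\delta_L,\delta_M,\delta_N$, in which $\delta_N$ is an isomorphism. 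A standard five-lemma-type diagram chase (needing only exactness of the top row and left-exactness of the bottom row) then shows that $\delta_L$ is an isomorphism if and only if $\delta_M$ is. When in addition $\delta_M$ is an isomorphism, surjectivity of $M\to N$ forces $M^{**}\to N^{**}$ to be surjective, so the four-term sequence yields $\Ext^1_R(L^{*},R)\cong\Ext^1_R(M^{*},R)$; combined with the isomorphisms in degrees $\ge 2$, this shows condition (2) holds for $M$ if and only if it holds for $L$.

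Combining the equivalences for conditions (1), (2), (3), we conclude $L\in G(R)$ if and only if $M\in G(R)$ (using that an $\fpi$ module is finitely generated, so the finite-generation requirement in the definition of $G(R)$ is automatic in the restricted case). For the restricted G-class, assume further that $L$ and $M$ are $\fpi$ and that $N\in\rG(R)$, so that $N^{*}$ is $\fpi$. By what has just been shown, $L\in G(R)\iff M\in G(R)$, so only the $\fpi$ property of $L^{*}$ versus $M^{*}$ remains to be compared; but these occur in the short exact sequence $0\to N^{*}\to M^{*}\to L^{*}\to 0$ with $N^{*}$ already $\fpi$, so Lemma \ref{fpi} shows $L^{*}$ is $\fpi$ if and only if $M^{*}$ is. Hence $L\in\rG(R)\iff M\in\rG(R)$.

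The step I expect to require the most care is the middle one. Because $\Hom_R(-,R)$ is only left exact, the bottom row of the ladder is not short exact, and the two conditions ``$\delta_L$ (or $\delta_M$) is an isomorphism'' and ``$\Ext^1_R(L^{*},R)$ (or $\Ext^1_R(M^{*},R)$) vanishes'' are intertwined: establishing them for whichever of $L,M$ is not assumed to lie in $G(R)$ requires using the reflexivity of $N$ and the surjectivity of $M\to N$ to close the gap created by the missing surjection $M^{**}\to N^{**}$. Once this is organized in the right order, the remainder is routine bookkeeping with the long exact sequences and the cited lemmas.
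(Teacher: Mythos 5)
Your proof is correct and follows essentially the same route as the paper: the paper delegates the G-class equivalence to \cite[Lemma 1.1.10(a)]{Christensen2000a}, whose standard proof is exactly your argument (dualize to get $0\to N^*\to M^*\to L^*\to 0$ plus the $\Ext$ isomorphisms, then dualize again and chase the biduality ladder), and your handling of the $\fpi$ condition on $L^*$ versus $M^*$ via Lemma \ref{fpi} is precisely the observation the paper adds. No gaps; the order in which you establish condition (3) before settling $\Ext^1$ of the duals is the right way to organize the chase.
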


\begin{proof}  See \cite[Lemma 1.1.10(a)]{Christensen2000a}.  We add the observation that, as
$0\to N^*\to M^*\to L^*\to 0$ is exact and $N^*$ is $\fpi$, we have $L^*$ is $\fpi$ if and only if $M^*$ is $\fpi$ by Lemma \ref{fpi}.
\end{proof}

\begin{lemma} \label{Gdim=0} Let $M$ be an $R$-module of finite G-dimension (respectively, $\rG$-dimension) such that $\Ext^i_R(M,R)=0$ for all $i>0$.  Then $\Gdim_RM=0$ (respectively, $\rGdim_RM=0$).
\end{lemma}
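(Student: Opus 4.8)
The plan is to induct on $n=\Gdim_RM$ (respectively $n=\rGdim_RM$), with the case $n=0$ being vacuous. For the inductive step, suppose $n\ge 1$ and choose a G-resolution (respectively $\rG$-resolution); since $M$ is finitely generated (respectively $\fpi$) one may take $G_0$ to be a finitely generated free module, giving a short exact sequence $0\to K\to G_0\to M\to 0$ with $K$ a syzygy. Splicing the tail of the resolution onto $K$ shows $\Gdim_RK\le n-1$ (respectively $\rGdim_RK\le n-1$), and the long exact sequence of $\Ext_R(-,R)$ together with $\Ext^i_R(G_0,R)=0$ for $i>0$ gives $\Ext^i_R(K,R)\cong\Ext^{i+1}_R(M,R)=0$ for all $i>0$. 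The inductive hypothesis then yields $K\in\text{\rm G}(R)$ (respectively $K\in\rG(R)$). So the entire statement reduces to proving: if $0\to G_1\to G_0\to M\to 0$ is exact with $G_0,G_1$ in the G-class (respectively $\rG$-class) and $\Ext^i_R(M,R)=0$ for all $i>0$, then $M$ lies in the same class.

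For this reduced claim I would dualize twice. Because $\Ext^1_R(M,R)=0$, applying $\Hom_R(-,R)$ to $0\to G_1\to G_0\to M\to 0$ yields an exact sequence $0\to M^*\to G_0^*\to G_1^*\to 0$. Now apply $\Hom_R(-,R)$ again: the crucial observation is that, although $G_0^*$ and $G_1^*$ need not be finitely generated when $R$ is non-Noetherian, the membership $G_0,G_1\in\text{\rm G}(R)$ supplies exactly what is needed, namely $\Ext^i_R(G_j^*,R)=0$ for $i>0$ and that the biduality maps $G_j\to G_j^{**}$ are isomorphisms. Chasing the resulting long exact sequence produces $\Ext^i_R(M^*,R)=0$ for all $i>0$ and an exact sequence $0\to G_1^{**}\to G_0^{**}\to M^{**}\to 0$.

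To finish, I would use naturality of the biduality transformation to obtain a map of short exact sequences from $0\to G_1\to G_0\to M\to 0$ to $0\to G_1^{**}\to G_0^{**}\to M^{**}\to 0$ whose vertical maps over $G_1$ and $G_0$ are the isomorphisms $\eta_{G_1}$ and $\eta_{G_0}$ and whose vertical map over $M$ is the biduality map $M\to M^{**}$; the five lemma then forces $M\to M^{**}$ to be an isomorphism. Combined with the hypothesis $\Ext^i_R(M,R)=0$ and the vanishing $\Ext^i_R(M^*,R)=0$ just obtained, this gives $M\in\text{\rm G}(R)$, so $\Gdim_RM=0$. In the restricted case one checks in addition that $M^*$ is $\fpi$ — using $0\to M^*\to G_0^*\to G_1^*\to 0$, Lemma \ref{fpi}, and the fact that $\rG(R)$ is closed under duals — whence $M\in\rG(R)$ and $\rGdim_RM=0$. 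I expect the main point requiring care to be precisely this non-finite-generation of the duals $G_j^*$ in the G-dimension case: one cannot simply invoke closure of the G-class under duality (which genuinely fails without Noetherianness), and must instead argue directly from the Ext-vanishing and reflexivity that define membership in the G-class.
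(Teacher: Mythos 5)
Your argument is correct and is essentially the proof the paper invokes by citing Christensen's Lemma 1.2.6: reduce by induction along a short exact sequence $0\to K\to G_0\to M\to 0$ coming from a length-$n$ resolution, then dualize twice and apply the five lemma, using only the defining conditions of (restricted) G-class membership of $G_0$ and $K$ rather than closure of the class under duals --- which is exactly the adjustment needed in the non-Noetherian setting, and in the restricted case your $\fpi$ check for $M^*$ via $0\to M^*\to G_0^*\to G_1^*\to 0$ and Lemma \ref{fpi} is right. One small remark: the assertion that one may take $G_0$ to be a finitely generated free module is neither justified as stated (it would itself require a resolution-comparison argument of the kind underlying Proposition \ref{GDProp}, whose proof uses this very lemma) nor needed, since nothing in your argument uses freeness --- only that $G_0$ lies in $\text{\rm G}(R)$ (respectively $\rG(R)$) --- so simply take $G_0$ to be the degree-zero term of the chosen resolution and $K$ its first syzygy.
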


\begin{proof} See \cite[Lemma 1.2.6]{Christensen2000a}.
\end{proof}

The following proposition will be used frequently:

\begin{prop} \label{GDProp} Let $M$ be an $R$-module which is $\fpi$ and $n\ge 0$ an integer.  The following are equivalent:
\begin{enumerate}
\item $\Gdim_RM\le n$.
\item $\Gdim_RM<\infty$ and $\Ext^i_R(M,R)=0$ for $i>n$.
\item  The $n$th syzygy of any G-resolution of $M$ is in the G-class of $R$.
\end{enumerate}
The same statement also holds if $\Gdim$, G-resolution, and G-class are replaced with $\rGdim$,
$\rG$-resolution, and $\rG$-class, respectively.
\end{prop}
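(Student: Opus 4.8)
The plan is to run the dimension-shifting argument familiar from the Noetherian case (see \cite[Chapter~1]{Christensen2000a}), the only genuinely new ingredient being the bookkeeping that keeps every module occurring along the way finitely generated, respectively $\fpi$, so that membership in $\text{\rm G}(R)$, respectively $\rG(R)$, is meaningful. The single tool used throughout is dimension shifting: if $\mathbf G$ is a G-resolution of $M$ with $n$th syzygy $K_n$, then, since each $G_i$ lies in $\text{\rm G}(R)$ and hence satisfies $\Ext^j_R(G_i,R)=0$ for $j>0$, breaking $\mathbf G$ into short exact sequences and chasing the long exact sequences of $\Ext_R(-,R)$ gives $\Ext^j_R(K_n,R)\cong\Ext^{n+j}_R(M,R)$ for every $j>0$. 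Recall from the preceding discussion that a syzygy of a G-resolution is finitely generated and a syzygy of an $\rG$-resolution is $\fpi$, so it is legitimate to ask whether such a syzygy belongs to $\text{\rm G}(R)$, respectively $\rG(R)$. I will establish the cycle $(1)\Rightarrow(2)\Rightarrow(3)\Rightarrow(1)$ for $\Gdim$ and the G-class; the $\rG$-statement follows by the identical argument with Lemma \ref{Gdim=0} and Proposition \ref{Gclass-ses} replaced by their $\rG$-forms. (Note in particular that the $\rG$-form of Lemma \ref{Gdim=0}, where applicable, produces membership in $\rG(R)$ outright, so the requirement that the dual be $\fpi$ is never checked by hand.)

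The implication $(1)\Rightarrow(2)$ is immediate: finiteness of $\Gdim_R M$ is hypothesized, and if $0\to G_n\to\cdots\to G_0\to M\to 0$ is a G-resolution of length at most $n$, then its $n$th syzygy is $G_n$ (or $0$), so $\Ext^{n+j}_R(M,R)\cong\Ext^j_R(G_n,R)=0$ for $j>0$ by dimension shifting. The implication $(3)\Rightarrow(1)$ is equally immediate: if $K_n$ is the $n$th syzygy of a G-resolution $\mathbf G$ of $M$ and $K_n\in\text{\rm G}(R)$, then $0\to K_n\to G_{n-1}\to\cdots\to G_0\to M\to 0$ is a G-resolution of $M$ of length at most $n$, so $\Gdim_R M\le n$.

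The content lies in $(2)\Rightarrow(3)$. Let $\mathbf G$ be an arbitrary G-resolution of $M$ and $K_n$ its $n$th syzygy. By dimension shifting together with hypothesis (2), $\Ext^j_R(K_n,R)\cong\Ext^{n+j}_R(M,R)=0$ for all $j>0$; hence, by Lemma \ref{Gdim=0}, it suffices to prove $\Gdim_R K_n<\infty$. This is where one imports the standard structural fact that finite G-dimension descends to syzygies: if $\Gdim_R M<\infty$, then every syzygy of every G-resolution of $M$ again has finite G-dimension. In the Noetherian case this follows by comparing an arbitrary G-resolution with a finite one of length $\Gdim_R M$ --- e.g.\ via iterated pullbacks --- and invoking the ``two-out-of-three'' behavior of finite G-dimension along short exact sequences together with Proposition \ref{Gclass-ses}; the proof uses only that $\text{\rm G}(R)$ is closed under extensions and direct summands, and each module that appears is a syzygy, summand, extension, or pullback of ones already known to be finitely generated (resp.\ $\fpi$), hence is again such by Lemma \ref{fpi} and the closure of the $\fpi$ modules under summands, so the argument transplants. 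The main obstacle is precisely this transplanting: over a non-Noetherian ring one must avoid dualizing inside $\text{\rm G}(R)$, since the $R$-dual of a member of $\text{\rm G}(R)$ need not be finitely generated, so any step of the Noetherian argument that passes to such a dual has to be rerouted (for the $\rG$-statement this difficulty disappears, since $\rG(R)$ is closed under duals). Once $\Gdim_R K_n<\infty$ is in hand, Lemma \ref{Gdim=0} yields $K_n\in\text{\rm G}(R)$, which is (3), completing the cycle.
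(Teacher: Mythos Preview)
Your approach is essentially that of the paper, which defers to \cite[Theorem~1.2.7]{Christensen2000a} and adds only the bookkeeping that the auxiliary modules arising in the pullback comparison remain $\fpi$ (the paper's one-line remark that ``$H_n$ is $\fpi$ if and only if $K_n$ is $\fpi$''). One caution on wording: your phrase ``the `two-out-of-three' behavior of finite G-dimension along short exact sequences'' reads as an invocation of Proposition~\ref{Horseshoe}, whose proof in this paper \emph{uses} Proposition~\ref{GDProp} and would therefore be circular here. What the iterated-pullback comparison actually needs is only that $\text{\rm G}(R)$ (respectively $\rG(R)$) is closed under extensions, together with Proposition~\ref{Gclass-ses} --- which you do also say a sentence later --- so the mathematics is right but the earlier phrase should be replaced. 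Finally, your concern about having to ``reroute'' around dualization inside $\text{\rm G}(R)$ is misplaced for this particular argument: the pullback comparison never forms an $R$-dual, so no detour is required.
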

\begin{proof}  The proof is similar to that of  \cite[Theorem 1.2.7]{Christensen2000a}, substituting Proposition \ref{Gclass-ses} and Lemma \ref{Gdim=0} where appropriate.  We add the observation that,  using the notation of the proof of  \cite[Theorem 1.2.7]{Christensen2000a}, $H_n$ is $\fpi$ if and only if $K_n$ is $\fpi$.
\end{proof}

Consequently, we have the following:

\begin{cor} \label{G=rGa}Let $M$ be a nonzero $R$-module of finite G-dimension.  Then
$$\Gdim_RM=\sup\{i\ge 0\mid \Ext^i_R(M,R)\neq 0\}.$$
The same statement also holds if G-dimension is replaced by $\rG$-dimension.  Hence, if $M$ has finite $\rG$-dimension then $\Gdim_RM=\rGdim_RM$.
\end{cor}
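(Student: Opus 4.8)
The plan is to deduce the corollary directly from Proposition \ref{GDProp}. Assume $M$ is a nonzero $R$-module of finite G-dimension, and set $g=\Gdim_RM$ and $s=\sup\{i\ge 0\mid \Ext^i_R(M,R)\neq 0\}$ (with the convention $\sup\emptyset=0$). First I would observe that the implication (1)$\Rightarrow$(2) of Proposition \ref{GDProp}, applied with $n=g$, gives $\Ext^i_R(M,R)=0$ for all $i>g$; hence $s\le g$. For the reverse inequality, I would note that $s<\infty$ (since $g<\infty$ forces vanishing in all large degrees) and that $\Ext^i_R(M,R)=0$ for all $i>s$ by definition of $s$. Since $\Gdim_RM=g<\infty$, the implication (2)$\Rightarrow$(1) of Proposition \ref{GDProp}, applied with $n=s$, yields $\Gdim_RM\le s$, i.e. $g\le s$. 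Combining the two inequalities gives $g=s$, which is the asserted formula.

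The only subtlety is the edge case where $s=0$, i.e. $\Ext^i_R(M,R)=0$ for all $i>0$. Here one cannot directly quote the numbered equivalences as stated for $n=0$ without care, but Lemma \ref{Gdim=0} handles exactly this situation: since $M$ has finite G-dimension and all higher Ext into $R$ vanishes, $\Gdim_RM=0=s$. So I would split into the two cases $s>0$ and $s=0$, using Proposition \ref{GDProp} in the former and Lemma \ref{Gdim=0} in the latter. (In fact Proposition \ref{GDProp} already incorporates Lemma \ref{Gdim=0} in its proof, so a single clean invocation suffices, but flagging the $n=0$ case makes the argument transparent.)

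The statement for $\rG$-dimension is obtained by the identical argument, replacing Proposition \ref{GDProp} and Lemma \ref{Gdim=0} by their $\rG$-versions, which are included in the same statements. Finally, suppose $M$ has finite $\rG$-dimension. Then $M$ is $\fpi$ (every syzygy in an $\rG$-resolution is $\fpi$, and $\rG(R)\subseteq\text{\rm G}(R)$ consists of $\fpi$ modules), so $M$ also has finite G-dimension, and both $\Gdim_RM$ and $\rGdim_RM$ equal $\sup\{i\ge 0\mid \Ext^i_R(M,R)\neq 0\}$ by the two formulas just established; hence $\Gdim_RM=\rGdim_RM$. I do not anticipate a real obstacle here — the work has all been pushed into Proposition \ref{GDProp}, and this corollary is a formal consequence; the only thing to be careful about is the degree-zero boundary case and the convention for the supremum of the empty set.
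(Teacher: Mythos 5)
Your argument is correct and matches the paper's: the corollary is stated there as an immediate consequence of Proposition \ref{GDProp} (whose proof already absorbs Lemma \ref{Gdim=0}), applied in both directions exactly as you do, with the final equality $\Gdim_RM=\rGdim_RM$ following from $\rG(R)\subseteq\text{\rm G}(R)$. Your extra care with the $n=0$ boundary case is harmless but not needed beyond what Proposition \ref{GDProp} already provides.
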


We note that if $R$ is coherent and $M$ is a finitely presented $R$-module then $M^*$ is also finitely presented (and hence $\fpi$).  This follows from the more general fact that, over a coherent ring, kernels and cokernels of homomorphisms between finitely presented modules are again finitely presented (cf.
Chapter 2 of \cite{Glaz1989a}).  In light of this observation, we  have:

\begin{cor} \label{G=rGb} Let $R$ be a coherent ring and $M$ a finitely presented $R$-module.  Then $\Gdim_RM=\rGdim_RM$.
\end{cor}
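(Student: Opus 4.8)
The plan is to establish the two inequalities $\Gdim_RM\le \rGdim_RM$ and $\rGdim_RM\le \Gdim_RM$ separately. The first is already recorded in the text: it holds for every $\fpi$ module, and $M$ is finitely presented over the coherent ring $R$, hence $\fpi$. So all the work is in the reverse inequality. We may assume $M\neq 0$ (the statement is trivial otherwise) and $\Gdim_RM=n<\infty$, since if $\Gdim_RM=\infty$ then $\rGdim_RM=\infty$ as well by the first inequality.

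First I would fix a free resolution $\mathbf F$ of $M$ consisting of finitely generated free $R$-modules, which exists because $M$ is $\fpi$; in particular $\mathbf F$ is an $\rG$-resolution, so $\rGdim_RM$ is defined. Let $K_n$ be the $n$th syzygy of $\mathbf F$. Since $R$ is coherent, kernels of homomorphisms between finitely presented modules are finitely presented; applying this successively along $\mathbf F$ (each $F_i$ is finitely generated free, hence finitely presented, and $M$ is finitely presented) shows that $K_n$ is finitely presented, hence $\fpi$. Likewise, since the dual of a finitely presented module over a coherent ring is finitely presented, $K_n^*$ is finitely presented, hence $\fpi$.

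Next I would invoke Proposition \ref{GDProp}: as $\Gdim_RM\le n$, the $n$th syzygy $K_n$ of the G-resolution $\mathbf F$ lies in $\text{\rm G}(R)$. Combined with the previous paragraph, $K_n$ satisfies all three conditions defining $\rG(R)$, so $K_n\in\rG(R)$. Therefore the truncation $0\to K_n\to F_{n-1}\to\cdots\to F_0\to M\to 0$ is an $\rG$-resolution of $M$ of length at most $n$, whence $\rGdim_RM\le n=\Gdim_RM$. Together with the reverse inequality, this gives the claimed equality.

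The only point requiring any care — the ``main obstacle,'' such as it is — is verifying that the $\fpi$-hypotheses on $M$ and on $M^*$ built into the definition of $\rG(R)$ are automatic in the coherent finitely presented setting; this is precisely the stability of finitely presented modules under kernels, cokernels, and dualization over a coherent ring, as recorded in the remark preceding the corollary and in Chapter 2 of \cite{Glaz1989a}. Once this is in hand, the argument is just a truncation of a free resolution combined with Proposition \ref{GDProp}.
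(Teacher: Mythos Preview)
Your proof is correct and follows essentially the same approach as the paper's: both reduce to the case $\Gdim_RM=n<\infty$, take an $\rG$-resolution (you use a free resolution, the paper allows any $\rG$-resolution), apply Proposition~\ref{GDProp} to see the $n$th syzygy $K_n$ lies in $\text{\rm G}(R)$, and then use coherence to verify $K_n$ and $K_n^*$ are $\fpi$, placing $K_n$ in $\rG(R)$. The only cosmetic difference is in the final line: you conclude $\rGdim_RM\le n$ directly from the truncated resolution, while the paper first observes $\rGdim_RM<\infty$ and then invokes Corollary~\ref{G=rGa}; these are equivalent packagings of the same idea.
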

\begin{proof}  Note that as $R$ is coherent and $M$ is finitely presented, $M$ is $\fpi$.  Hence $\Gdim_RM$ and $\rGdim_RM$ are defined.
If $\Gdim_RM=\infty$ then clearly $\rGdim_RM=\infty$.  Suppose $\Gdim_RM=n<\infty$.   Let $\mathbf G$ be a $\rG$-resolution of $M$. By Proposition \ref{GDProp}, the $n$th syzygy $K_n$ of $\mathbf G$ is in $\text{\rm G}(R)$ and  is $\fpi$.   By the remark above, $K_n^*$ is $\fpi$.  Hence $K_n$ is in $\rG(R)$ and $\rGdim_RM<\infty$.  The result now follows from Corollary \ref{G=rGa}.
\end{proof}

As we are mainly interested in G-dimension in the context of coherent rings, and since G-dimension is equal to $\rG$-dimension for finitely presented modules over coherent rings,  we will subsequently state results only in terms of $\rG$-dimension (although some results may also hold for G-dimension).

For an $R$-module $M$ we let $\pd_RM$ denote the projective dimension of $M$.  If $M$ is $\fpi$ and $\pd_RM=n$ it is easily seen  that $M$ has a projective resolution of length $n$ consisting of finitely generated projective modules in each degree.

\begin{prop} \label{pdgd} Suppose $M$ is $\fpi$ and has finite projective dimension.  Then $\rGdim_RM=\pd_RM$.
\end{prop}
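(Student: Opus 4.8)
The plan is to prove the two inequalities $\rGdim_R M \le \pd_R M$ and $\pd_R M \le \rGdim_R M$ separately, the first being easy and the second requiring the standard rigidity argument for $\Ext$ of modules of finite projective dimension. For the inequality $\rGdim_R M \le \pd_R M$: if $\pd_R M = \infty$ there is nothing to prove, so assume $\pd_R M = n < \infty$. Since $M$ is $\fpi$, as noted just before the statement, $M$ has a projective resolution of length $n$ by finitely generated projective modules; each such module lies in $\rG(R)$ (a finitely generated projective module is $\fpi$, its dual is finitely generated projective hence $\fpi$, and it trivially satisfies the $\text{\rm G}(R)$ conditions). Thus this is an $\rG$-resolution of length $n$, giving $\rGdim_R M \le n = \pd_R M$.

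For the reverse inequality, assume $\rGdim_R M = n < \infty$ (otherwise nothing to prove) and also $\pd_R M < \infty$; I must show $\pd_R M \le n$. Choose a resolution $\mathbf F$ of $M$ by finitely generated free $R$-modules and let $K_n$ be its $n$th syzygy. By Proposition \ref{GDProp} applied to the $\rG$-dimension (a free resolution is in particular an $\rG$-resolution), $K_n \in \rG(R)$; in particular $\Ext^i_R(K_n, R) = 0$ for all $i > 0$. On the other hand, $\pd_R M < \infty$ forces $\pd_R K_n < \infty$, say $\pd_R K_n = m$. If $m > 0$, then $\Ext^m_R(K_n, R) \neq 0$: this is the standard fact that a module of finite positive projective dimension has nonvanishing $\Ext$ into $R$ in its top degree — one sees it by dualizing a minimal-length finite free resolution of $K_n$ and noting the top-degree map of duals cannot be split surjective, since a splitting would contradict $\pd_R K_n = m$ (the argument only needs that $K_n$ is $\fpi$ with finite projective dimension, so that a finite free resolution by finitely generated frees exists). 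This contradicts $\Ext^m_R(K_n,R) = 0$, so $m = 0$, i.e. $K_n$ is projective. Hence $\mathbf F$ truncated at stage $n$ is a projective resolution of $M$ of length $n$, so $\pd_R M \le n = \rGdim_R M$.

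Combining the two inequalities gives $\rGdim_R M = \pd_R M$. The main obstacle is the rigidity step in the second inequality, namely showing that a nonzero $\fpi$ module of finite projective dimension $m$ has $\Ext^m_R(-, R) \neq 0$; the delicate point is that we are over an arbitrary (non-Noetherian) ring, so one cannot invoke minimal resolutions in the usual sense, but since $K_n$ is $\fpi$ with finite projective dimension it admits a finite free resolution by finitely generated free modules of length exactly $m$, and one argues that if the top-degree dual map were surjective it would split, allowing the resolution to be shortened and contradicting $\pd_R K_n = m$. Everything else is a routine combination of the definitions with Proposition \ref{GDProp}.
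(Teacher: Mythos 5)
Your proof is correct, and the first half coincides with the paper's: a finite projective resolution of $M$ by finitely generated projectives is an $\rG$-resolution, giving $\rGdim_RM\le\pd_RM$. For the reverse inequality both you and the paper reduce to showing that the syzygy $K$ of $M$ in degree $\rGdim_RM$ (which lies in $\rG(R)$ by Proposition \ref{GDProp} and has finite projective dimension) is projective, but the mechanisms differ. The paper dualizes the finite projective resolution of $K$ obtained as the tail of the resolution of $M$; the vanishing $\Ext^i_R(K,R)=0$ for $i>0$ makes the dualized sequence exact, so $K^*$ is projective, and then the reflexivity of $K$ (full $\rG(R)$ membership) gives $K\cong K^{**}$ projective. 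You instead invoke a rigidity fact: an $\fpi$ module of finite projective dimension $m>0$ has nonvanishing $\Ext^m_R(-,R)$, proved by observing that if the top dual map of a length-$m$ resolution by finitely generated projectives were surjective it would split (its target is projective), and dualizing the splitting -- which uses only reflexivity of the finitely generated projective terms, not of $K$ -- shortens the resolution, contradicting minimality of $m$. That argument is valid over an arbitrary ring; the only cosmetic point is that the top term of the length-$m$ resolution need only be taken finitely generated projective rather than free (or one adds a free complement), which does not affect the splitting argument. In effect your route proves the slightly stronger statement that an $\fpi$ module of finite projective dimension with $\Ext^{>0}_R(-,R)=0$ is projective, without appealing to reflexivity of the module, whereas the paper's argument is a bit shorter precisely because it exploits the reflexivity built into $\rG(R)$.
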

\begin{proof} Let  $n=\pd_RM$ and $m=\rG\dim_RM$.  As remarked above, there exists a projective resolution $\mathbf P$ of $M$ of length $n$ where $P_i$ is finitely generated for each $i$.   Hence, $\mathbf P$ is a $\rG$-resolution of $M$ of length $n$ and  $m\le n$.   Let $K_m$ be the $m$th syzygy of $\mathbf P$.  By Proposition \ref{GDProp}, $K_m$ is in $\rG(R)$.  It is enough to show that $K_m$ is projective.    Since
$$0\to P_n\to P_{n-1} \to \cdots \to P_{m+1}\to K_m\to 0$$
is exact and $\Ext^i_R(K,R)=0$ for all $i>0$, we have
$$0\to K_m^*\to P_{m+1}^*\to \cdots \to P^*_n\to 0$$
is also exact.  As $P_i^*$ is projective for all $i$, $K_m^*$ is projective.  Hence, $K_m^{**}\cong K_m$ is
projective.
\end{proof}

\begin{prop} \label{Gdim-ses} Suppose $0\to K\to G\to M\to 0$ is exact where $\rGdim_RM>0$ and $G$ is in $\rG(R)$.  Then $\rGdim_RK=\rGdim_RM-1$.
\end{prop}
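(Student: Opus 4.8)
The plan is to pin down $\rGdim_R K$ as $\rGdim_R M-1$ by proving the two inequalities $\rGdim_R K\le\rGdim_R M-1$ and $\rGdim_R K\ge\rGdim_R M-1$, splicing resolutions in the two possible directions. First a little bookkeeping. The hypothesis $\rGdim_R M>0$ presupposes that $\rGdim_R M$ is defined, so $M$ has a $\rG$-resolution $\mathbf H$; applying Lemma \ref{fpi}(1) to $0\to K_1\to H_0\to M\to 0$, where $K_1$ is the first syzygy of $\mathbf H$ (which is $\fpi$, as is $H_0\in\rG(R)$), shows $M$ is $\fpi$, and then Lemma \ref{fpi}(2) applied to $0\to K\to G\to M\to 0$ shows $K$ is $\fpi$. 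In particular $\rGdim_R K$ is defined and $K$ has a resolution $\mathbf F$ by finitely generated free modules. Note also that $\rGdim_R M>0$ forces $M\ne 0$, and, when finite, $\rGdim_R M\ge 1$.

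For the first inequality, suppose $\rGdim_R K=k<\infty$ and fix a $\rG$-resolution $0\to G_k\to\cdots\to G_0\to K\to 0$ of $K$. Splicing this onto $0\to K\to G\to M\to 0$ --- that is, replacing the augmentation $G_0\to K$ by the composite $G_0\twoheadrightarrow K\hookrightarrow G$ --- produces a complex $\cdots\to G_1\to G_0\to G\to 0$ with $H_0\cong M$, exact in positive degrees (because $K\hookrightarrow G$ is injective, the kernel of $G_0\to G$ is still the image of $G_1\to G_0$), and with every term in $\rG(R)$. It is therefore a $\rG$-resolution of $M$ of length at most $k+1$, so $\rGdim_R M\le k+1$. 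Contrapositively, $\rGdim_R M=\infty$ forces $\rGdim_R K=\infty$, and the result holds in that case with the convention $\infty-1=\infty$; so from now on set $m=\rGdim_R M$ with $1\le m<\infty$, and we have shown $\rGdim_R K\ge m-1$.

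For the reverse inequality, splice in the other direction: let $\mathbf G$ be the complex $\cdots\to F_1\to F_0\to G\to 0$ with $G$ in degree $0$ and $F_0\to G$ the composite $F_0\twoheadrightarrow K\hookrightarrow G$; as above this is a $\rG$-resolution of $M$, with $H_0(\mathbf G)=G/K\cong M$. Since $\rGdim_R M=m$, Proposition \ref{GDProp} shows that the $m$th syzygy of $\mathbf G$ lies in $\rG(R)$. The terms of $\mathbf G$ in positive degrees are $G_j=F_{j-1}$ equipped with the differentials of $\mathbf F$, so the $m$th syzygy $\ker(G_{m-1}\to G_{m-2})$ of $\mathbf G$ is exactly the $(m-1)$st syzygy $S$ of $\mathbf F$ (for $m\ge 3$; the cases $m=1$, where the syzygy in question is $K$ itself, and $m=2$ are immediate, once more using injectivity of $K\hookrightarrow G$). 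Hence $0\to S\to F_{m-2}\to\cdots\to F_0\to K\to 0$ is a $\rG$-resolution of $K$ of length $m-1$, so $\rGdim_R K\le m-1$. Combined with the first inequality, $\rGdim_R K=m-1=\rGdim_R M-1$.

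I expect the step most in need of care to be the syzygy bookkeeping in the last paragraph --- keeping the degree shift between $\mathbf G$ and $\mathbf F$ straight and disposing of the small values of $m$ --- along with the (routine) verifications that the two spliced complexes really are $\rG$-resolutions, i.e. that they have the right homology and that every term lies in $\rG(R)$, the latter reducing to the facts that finitely generated free modules and $G$ all belong to $\rG(R)$. The point that keeps the argument free of circularity is that the appeal to Proposition \ref{GDProp} delivers the upper bound $\rGdim_R K\le m-1$ directly, without our first having to know that $\rGdim_R K$ is finite.
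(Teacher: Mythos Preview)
Your proof is correct and follows essentially the same approach as the paper's: both splice a free resolution $\mathbf F$ of $K$ onto the given short exact sequence to obtain a $\rG$-resolution of $M$, then invoke Proposition~\ref{GDProp} to conclude that the appropriate syzygy of $\mathbf F$ lies in $\rG(R)$. You supply more detail than the paper does---in particular, you spell out the splicing argument behind the inequality $\rGdim_R K \ge m-1$ and the case $\rGdim_R M = \infty$, both of which the paper dismisses as ``clear''---but the core idea is identical.
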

\begin{proof} By Lemma \ref{fpi}, $K$ is $\fpi$ and $\rGdim_RK$ is defined.   It is clear that if $\rGdim_RM=\infty$ then $\rGdim_RK=\infty$, so we may assume $n=\rGdim_RM$ is positive and finite.  Clearly, $\rGdim_RK\ge n-1$.
Let $\mathbf F$ be a resolution of $K$ consisting of finitely generated free $R$-modules.  Since the composition  $\mathbf F\to G$ gives a $\rG$-resolution of $M$, the $(n-1)$st syzygy of $\mathbf F$ is in $\rG(R)$ by Proposition \ref{GDProp}.  Hence, $\rGdim_RK\le n-1$.
\end{proof}

We will also need the following:

\begin{prop}\label{Horseshoe}
Let $0\to L \to M \to N \to 0$ be an exact sequence of  $R$-modules each of which is $\fpi$.  Then the following hold: \begin{enumerate}
\item If $\rGdim_R L \leq n$ and $\rGdim_R N \leq n$, then $\rGdim_R M \leq n$.
\item If  $\rGdim_R M \leq n$ and $\rGdim_R N \leq n$, then $\rGdim_R L \leq n$.
\item If  $\rGdim_R L \leq n$ and $\rGdim_R M \leq n$, then $\rGdim_R N \leq n+1$.
\end{enumerate}
In particular, if any two of the modules has finite $\rG$-dimension, then so does the third.
\end{prop}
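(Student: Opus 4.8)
The plan is to prove (1), (2), and (3) each by induction on $n$, using at every stage a ``horseshoe'' construction to pass to first syzygies, where the inductive hypothesis applies. The engine for the induction step is the following observation, valid for any $\fpi$ module $X$ fitting into a short exact sequence $0\to X'\to F\to X\to 0$ with $F$ finitely generated free: if $n\geq 1$ and $\rGdim_R X\leq n$, then $\rGdim_R X'\leq n-1$; and conversely, if $\rGdim_R X'\leq m$, then $\rGdim_R X\leq m+1$. The first assertion follows from Proposition \ref{Gdim-ses} when $\rGdim_R X>0$, and from Proposition \ref{Gclass-ses} applied to $0\to X'\to F\to X\to 0$ (noting $F\in\rG(R)$) when $\rGdim_R X=0$; the converse is immediate by splicing a $\rG$-resolution of $X'$ of length $\leq m$ onto $F\to X$. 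Note that $X'$ is again $\fpi$ by Lemma \ref{fpi}, so all the $\rG$-dimensions in sight are defined.

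For the horseshoe step, given $0\to L\to M\to N\to 0$ I would choose surjections onto $L$ and $N$ from finitely generated free modules $F_L$ and $F_N$, and invoke the Horseshoe Lemma to obtain a surjection $F_L\oplus F_N\to M$ together with a short exact sequence $0\to L'\to M'\to N'\to 0$ of the corresponding kernels, each of which is $\fpi$ by Lemma \ref{fpi}. In each of the three parts the dimension hypotheses on two of $L,M,N$, with bound $n\geq 1$, pass via the observation above to the same two of $L',M',N'$ with bound $n-1$; the inductive hypothesis then bounds the $\rG$-dimension of the third syzygy, and one final application of the observation (in its converse form) lifts this back to the required bound. Concretely: in (1) and (2) the syzygy sequence yields bound $n-1$ on $M'$, respectively $L'$, hence bound $n$ on $M$, respectively $L$; in (3) it yields bound $(n-1)+1=n$ on $N'$, hence bound $n+1$ on $N$.

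It remains to treat the base case $n=0$. Parts (1) and (2) are then formal: if $L,N\in\rG(R)$ then $M\in\rG(R)$ because $\rG(R)$ is closed under extensions, while if $M,N\in\rG(R)$ then $L\in\rG(R)$ by Proposition \ref{Gclass-ses}. The base case of (3)---showing $\rGdim_R N\leq 1$ when $L,M\in\rG(R)$---is the one point that needs more than a closure argument, and this is where I expect the only genuine obstacle to lie. Here I would fix a surjection $F\to N$ with $F$ finitely generated free and kernel $N'$, form the pullback $P$ of $F\to N\leftarrow M$, and observe that the induced surjection $P\to F$ splits (as $F$ is free) with kernel $L$, so $P\cong F\oplus L$; the other projection gives a short exact sequence $0\to N'\to F\oplus L\to M\to 0$. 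Since $F\oplus L\in\rG(R)$ and $M\in\rG(R)$, Proposition \ref{Gclass-ses} forces $N'\in\rG(R)$, whence $\rGdim_R N\leq 1$. Finally, the ``in particular'' clause follows immediately: if two of $L,M,N$ have finite $\rG$-dimension, take $n$ to be the larger of the two values and apply the relevant part of (1)--(3) to see that the third has $\rG$-dimension at most $n+1<\infty$.
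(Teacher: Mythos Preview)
Your argument is correct, but it takes a more laborious route than the paper's. The paper does not induct on $n$: instead it applies the Horseshoe Lemma once to free resolutions $\mathbf{F}$, $\mathbf{F'}$ of $L$, $N$ (building $\mathbf{F''}$ for $M$) and passes directly to the short exact sequence $0\to K_n\to K_n''\to K_n'\to 0$ of $n$th syzygies. Then Proposition~\ref{GDProp}(3) places two of the $K_n$'s in $\rG(R)$, and the closure properties of $\rG(R)$ (extensions for (1), Proposition~\ref{Gclass-ses} for (2), and the tautological fact that the displayed sequence is itself a $\rG$-resolution of length $\le 1$ for (3)) finish the job in one stroke. Your inductive unwinding is equivalent but spreads this single step over $n$ iterations.

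One specific simplification: your base case for (3) is over-engineered. You flag it as ``the only genuine obstacle,'' but in fact it is trivial by the definition of $\rG$-dimension: if $L,M\in\rG(R)$ then the complex $\cdots\to 0\to L\to M\to 0\to\cdots$ (with $M$ in degree $0$) is already a $\rG$-resolution of $N$ of length at most $1$, so $\rGdim_R N\le 1$ immediately. The pullback construction you give is valid and pleasant, but unnecessary here.
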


\begin{proof}
We prove (3).  The remaining two parts are proved similarly. Let $\mathbf F$ and $\mathbf{F'}$ be free-resolutions of $L$ and $N$, respectively, which consist of finitely generated free \(R\)-modules.  By the Horseshoe Lemma, there exists a free resolution $\mathbf{F''}$ of $M$ consisting of finitely generated free $R$-modules and chain maps $\mathbf{F}\to \mathbf{F''}$ and $\mathbf{F''}\to \mathbf{F'}$ such that
\begin{equation*}
0 \to \mathbf{F} \to \mathbf{F''} \to \mathbf{F'} \to 0
\end{equation*}
is an exact sequence of complexes.

Let $K_n$, $K_n'$, and $K_n''$ denote the $n$th syzygies of $\mathbf{F}$, $\mathbf{F'}$, and $\mathbf{F''}$, respectively.  Then the sequence
\begin{equation*}
0 \to K_n \to K''_n \to K_n'\to 0
\end{equation*}
is exact.  Since $L$ and $M$ have $\rG$-dimension at most $n$, we have that $K_n$ and $K_n''$ are in $\rG(R)$ by Proposition \ref{GDProp}.  Hence, $\rGdim_RK_n\le 1$, which implies that $\rGdim_RN\le n+1$.\end{proof}

The next result shows the restricted G-class is preserved by flat base change:

\begin{prop} \label{Gclass-flat} Let $S$ be a flat $R$-algebra and $M$ an $R$-module.
\begin{enumerate}
\item If $M$ is in $\rG(R)$ then $M\otimes_RS$ is in $\rG(S)$.
\item If $S$ is faithfully flat then the converse of (1) holds.
\end{enumerate}
\end{prop}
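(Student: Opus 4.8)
The plan is to reduce everything to the standard flat base-change isomorphisms for $\Hom$ and $\Ext$, together with Remark~\ref{fpi-flat}. The key fact I would record first is this: if $N$ is an $\fpi^R$ module (so in particular finitely presented) and $S$ is a flat $R$-algebra, then there are natural isomorphisms $\Ext^i_R(N,R)\otimes_RS\cong\Ext^i_S(N\otimes_RS,S)$ for all $i\ge0$; the case $i=0$ reads $N^*\otimes_RS\cong(N\otimes_RS)^*$. To see this, pick a resolution $\mathbf P$ of $N$ by finitely generated free $R$-modules (available since $N$ is $\fpi$); then $\mathbf P\otimes_RS$ is such a resolution of $N\otimes_RS$ since $S$ is flat, the complexes $\Hom_R(\mathbf P,R)\otimes_RS$ and $\Hom_S(\mathbf P\otimes_RS,S)$ agree because $\Hom_R$ out of a finitely generated free module commutes with $-\otimes_RS$, and flatness of $S$ lets one commute cohomology past $-\otimes_RS$. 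I would also note, by the same kind of identification, that the canonical biduality map $N\otimes_RS\to(N\otimes_RS)^{**}$ is carried to $(N\to N^{**})\otimes_RS$ under the iterated use of the $i=0$ isomorphism.

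First I would prove (1). Assume $M\in\rG(R)$ and set $M'=M\otimes_RS$. By Remark~\ref{fpi-flat}(1), $M'$ is $\fpi^S$, and since $M^*$ is $\fpi^R$ the isomorphism $(M')^*\cong M^*\otimes_RS$ shows $(M')^*$ is $\fpi^S$ as well. The base-change formula gives $\Ext^i_S(M',S)\cong\Ext^i_R(M,R)\otimes_RS=0$ for $i>0$, and, applying it to the $\fpi^R$ module $M^*$, $\Ext^i_S((M')^*,S)\cong\Ext^i_S(M^*\otimes_RS,S)\cong\Ext^i_R(M^*,R)\otimes_RS=0$ for $i>0$. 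Finally, under the identification $(M')^{**}\cong M^{**}\otimes_RS$ the biduality map of $M'$ is $(M\to M^{**})\otimes_RS$, which is an isomorphism because $M\to M^{**}$ is one and functors preserve isomorphisms. Hence $M'\in G(S)$, and with the $\fpi^S$ conditions, $M'\in\rG(S)$.

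Next I would prove (2), so now $S$ is faithfully flat and $M'=M\otimes_RS\in\rG(S)$. Since $M'$ is $\fpi^S$, Remark~\ref{fpi-flat}(2) gives that $M$ is $\fpi^R$, so the base-change facts above apply to $M$; then $M^*\otimes_RS\cong(M')^*$ is $\fpi^S$, hence $M^*$ is $\fpi^R$ again by Remark~\ref{fpi-flat}(2). From $\Ext^i_R(M,R)\otimes_RS\cong\Ext^i_S(M',S)=0$ and faithful flatness we get $\Ext^i_R(M,R)=0$ for $i>0$, and similarly $\Ext^i_R(M^*,R)\otimes_RS\cong\Ext^i_S((M')^*,S)=0$ forces $\Ext^i_R(M^*,R)=0$ for $i>0$. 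Lastly, $(M\to M^{**})\otimes_RS$ is, after the identifications, the biduality map of $M'$, which is an isomorphism; since $S$ is faithfully flat, $M\to M^{**}$ is an isomorphism (its kernel and cokernel die after $-\otimes_RS$, hence vanish). Thus $M\in G(R)$ and, with $M$ and $M^*$ both $\fpi^R$, $M\in\rG(R)$.

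The step I expect to require the most care is the compatibility assertion that the flat base-change isomorphism for $\Hom$ intertwines the two biduality (evaluation) maps; this is a routine but slightly fussy diagram chase resting on the canonical nature of the map $\Hom_R(N,R)\otimes_RS\to\Hom_S(N\otimes_RS,S)$ and its compatibility with evaluation. Everything else is a mechanical combination of flatness (exactness, commuting with cohomology), faithful flatness (reflecting vanishing and isomorphisms), and Remark~\ref{fpi-flat}.
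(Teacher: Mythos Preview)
Your proposal is correct and follows essentially the same approach as the paper: both use Remark~\ref{fpi-flat} for the $\fpi$ conditions, the standard flat base-change isomorphisms $\Ext^i_R(M,R)\otimes_RS\cong\Ext^i_S(M\otimes_RS,S)$ (valid because $M$ and $M^*$ are $\fpi$), and the compatibility of biduality maps under base change together with faithful flatness to reflect the isomorphism. The paper records the biduality step via an explicit commutative diagram with kernels and cokernels, which is exactly the ``slightly fussy diagram chase'' you anticipate.
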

\begin{proof} For an $S$-module $N$, let $N^{\chk}$ denote $\Hom_S(N,S)$.    By Remark \ref{fpi-flat},  if $M$ is $\fpi^R$ then $M\otimes_RS$ is $\fpi^S$, and the converse holds if $S$ is faithfully flat.  Hence,  in the proof of (1) or its converse (under the assumption that $S$ is faithfully flat), there are natural isomorphisms $\Ext^i_R(M,R)\otimes_RS\cong \Ext^i_S(M\otimes_RS, S)$ for all $i\ge 0$.  In particular, $M^*\otimes_RS\cong (M\otimes_RS)^{\chk}$.    Again by  Remark \ref{fpi-flat}, we have that  $M^*$ is $\fpi^R$ and $(M\otimes_RS)^{\chk}$ is $\fpi^S$ in both the proof of (1) and its converse; furthermore, we have natural isomorphisms $\Ext^i_R(M^*,R)\otimes_RS\cong \Ext^i_S((M\otimes_RS)^{\chk},S)$ for all $i\ge 0$.   Thus, if $\Ext^i_R(M,R)=\Ext^i_R(M^*,R)=0$ for $i>0$, then $\Ext^i_S(M\otimes_RS,S)=\Ext^i_S((M\otimes_RS)^{\chk},S)=0$ for all $i>0$, with the converse holding if $S$ is faithfully flat.   Let $\rho:M\to M^{**}$ and $\phi:M\otimes_RS\to (M\otimes_RS)^{\chk \chk}$ be the canonical maps.  Let $K$ and $C$ be the kernel and cokernel of $\rho$,  and $K'$ and $C'$ the kernel and cokernel of $\phi$.  Then by the flatness of $S$ we have the following commutative diagram:
$$
\begin{CD}
0@>>> K\otimes_RS @>>>M\otimes_RS@>> \rho\otimes 1 >M^{**}\otimes_RS @>>> C\otimes_RS@>>> 0 \\
@. @VVV @VV=V @VV\cong V  @VVV @. \\
0@>>> K' @>>> M\otimes_RS@>>\phi> (M\otimes_RS)^{\chk \chk}@>>> C' @>>> 0.
\end{CD}
$$
If $\rho$ is an isomorphism, then so is $\rho\otimes 1$ and thus $\phi$ also.  Conversely, if $\phi$ is an isomorphism and $S$ is faithfully flat, then $K=C=0$ and thus $\rho$ is an isomorphism.
\end{proof}

Membership in the restricted G-class is a local condition, as the next result demonstrates.

\begin{prop} \label{Gclass-local}  Suppose $M$ and $M^*$ are $\fpi$.  The following are equivalent:
\begin{enumerate}
\item $M$ is in $\rG(R)$
\item $M_p$ is in $\rG(R_p)$ for all prime ideals $p$.
\item $M_m$ is in $\rG(R_m)$ for all maximal ideals $m$.
\end{enumerate}
\end{prop}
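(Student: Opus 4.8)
The plan is to obtain $(1)\Rightarrow(2)\Rightarrow(3)$ essentially for free and to concentrate the work on $(3)\Rightarrow(1)$.  For each prime $p$ the ring $R_p$ is a flat $R$-algebra and $M_p=M\otimes_RR_p$, so Proposition \ref{Gclass-flat}(1) gives $(1)\Rightarrow(2)$ immediately, and $(2)\Rightarrow(3)$ is trivial.  Note that the converse half of Proposition \ref{Gclass-flat}(1) is \emph{not} available here, since $R_m$ is flat but not faithfully flat over $R$; the substitute for faithful flatness will be the elementary fact that an $R$-module is zero if and only if all of its localizations at maximal ideals vanish.

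For $(3)\Rightarrow(1)$, the first step is to record how the relevant data behaves under localization.  Since $M$ and $M^*$ are $\fpi$, hence finitely presented, the computation carried out in the proof of Proposition \ref{Gclass-flat} for a general flat base change $R\to S$, specialized to $S=R_m$, yields natural isomorphisms $\Ext^i_R(M,R)_m\cong\Ext^i_{R_m}(M_m,R_m)$ and $\Ext^i_R(M^*,R)_m\cong\Ext^i_{R_m}((M_m)^*,R_m)$ for every maximal ideal $m$ and all $i\ge0$, as well as an identification $(M^{**})_m\cong(M_m)^{**}$ under which the canonical map $\rho\colon M\to M^{**}$ localizes to the canonical map $\rho_m\colon M_m\to(M_m)^{**}$.

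With these compatibilities in hand, I would verify the three defining conditions of the G-class for $M$; combined with the standing hypothesis that $M$ and $M^*$ are $\fpi$, this shows $M\in\rG(R)$.  For conditions (1) and (2): when $i>0$ the modules $\Ext^i_R(M,R)$ and $\Ext^i_R(M^*,R)$ are zero because each of their localizations at a maximal ideal $m$ equals $\Ext^i_{R_m}(M_m,R_m)$, respectively $\Ext^i_{R_m}((M_m)^*,R_m)$, which vanishes since $M_m\in\rG(R_m)$.  For condition (3): let $K$ and $C$ be the kernel and cokernel of $\rho$.  Localizing the exact sequence $0\to K\to M\to M^{**}\to C\to 0$ at a maximal ideal $m$ and using the compatibility above identifies $K_m$ and $C_m$ with the kernel and cokernel of $\rho_m$, which are both $0$ because $M_m\in\rG(R_m)$ forces $\rho_m$ to be an isomorphism.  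Hence $K=C=0$ and $\rho$ is an isomorphism.

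The only genuinely delicate point is the localization compatibility invoked in the second paragraph, and this is exactly the $\fpi$-bookkeeping already performed (for an arbitrary flat base change) inside the proof of Proposition \ref{Gclass-flat}; so I do not anticipate any real obstacle beyond organizing that reference carefully.
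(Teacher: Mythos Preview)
Your proposal is correct and follows essentially the same route as the paper's proof: use Proposition \ref{Gclass-flat} for $(1)\Rightarrow(2)$, note $(2)\Rightarrow(3)$ is trivial, and for $(3)\Rightarrow(1)$ exploit the $\fpi$ hypotheses on $M$ and $M^*$ to get the localization isomorphisms for $\Ext$ and for the biduality map, then conclude by the local-to-global principle for vanishing of modules. Your explicit remark that faithful flatness is unavailable here and must be replaced by checking vanishing at all maximal ideals is exactly the point the paper is making implicitly.
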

\begin{proof}
By Proposition \ref{Gclass-flat}, it is enough to prove that (3) implies (1).   As $M$ and $M^*$ are  $\fpi$,
$\Ext^i_{R_m}(M_m,R_m)\cong \Ext^i_R(M,R)\otimes_RR_m=0$ and $\Ext^i_{R_m}((M_m)^*,R_m)\cong \Ext^i_R(M^*,R)\otimes_RR_m=0$ for all $i>0$ and all maximal ideals $m$.   Furthermore, there are natural isomorphisms $(M^*)_m\cong (M_m)^{\chk}$ and  $(M^{**})_m \cong (M_m)^{\chk \chk}$, where $(-)^{\chk}=\Hom_{R_m}(-,R_m)$.   Let $\rho:M\to M^{**}$ be the canonical homomorphism and $K=\ker \rho$ and $C=\operatorname{coker}\rho$.   By hypothesis, for each maximal ideal $m$ we have  $M_m$ is in $\rG(R_m)$, which implies $K_m=C_m=0$ for all $m$.  Hence, $\rho$ is an isomorphism and $M\in \rG(R)$.
\end{proof}

\begin{cor} \label{Gdim-local} Suppose $M$ is $\fpi$.  Then
\begin{enumerate}
\item $\rGdim_RM\ge \rGdim_S M\otimes_RS$ for all flat $R$-algebras $S$.
\item $\rGdim_RM=\rGdim_S M\otimes_RS$ for all faithfully flat $R$-algebras $S$.
\item If in addition $M^*$ is $\fpi$ then
$$\rGdim_RM=\sup \{\rGdim_{R_m}{M_m}\mid m \text{ a maximal ideal of }R\}.$$
\end{enumerate}
\end{cor}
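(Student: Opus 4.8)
The plan is to obtain (1) and (2) quickly from the syzygy criterion of Proposition~\ref{GDProp} together with Proposition~\ref{Gclass-flat}, and to put the real work into (3). For (1): if $\rGdim_R M=\infty$ there is nothing to prove, so suppose $\rGdim_R M=n<\infty$. As $M$ is $\fpi$, fix a free resolution $\mathbf F\to M$ with each $F_i$ finitely generated and let $K$ be its $n$th syzygy; then $K\in\rG(R)$ by Proposition~\ref{GDProp}. Since $S$ is flat, $\mathbf F\otimes_R S$ is a resolution of $M\otimes_R S$ by finitely generated free $S$-modules whose $n$th syzygy is $K\otimes_R S$, and $K\otimes_R S\in\rG(S)$ by Proposition~\ref{Gclass-flat}(1); Proposition~\ref{GDProp} over $S$ then gives $\rGdim_S(M\otimes_R S)\le n$. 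For (2) only the reverse inequality is new: if $\rGdim_S(M\otimes_R S)=n<\infty$, then with $K$ as above $K\otimes_R S$ is the $n$th syzygy of the finitely generated free $S$-resolution $\mathbf F\otimes_R S$ of $M\otimes_R S$, hence lies in $\rG(S)$ by Proposition~\ref{GDProp}; faithful flatness and Proposition~\ref{Gclass-flat}(2) force $K\in\rG(R)$, and Proposition~\ref{GDProp} once more yields $\rGdim_R M\le n$.

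For (3) write $s$ for the supremum on the right. Applying (1) with $S=R_m$ for each maximal ideal $m$ gives $\rGdim_R M\ge s$ at once, so we may assume $s<\infty$ and must show $\rGdim_R M\le s$. Since $M$ is $\fpi$, $\Ext^i_R(M,R)$ commutes with localization, so $\Ext^i_R(M,R)_m\cong\Ext^i_{R_m}(M_m,R_m)$; by Corollary~\ref{G=rGa} over $R_m$ the latter vanishes for $i>s$, and as this holds at every maximal ideal we get $\Ext^i_R(M,R)=0$ for $i>s$. Now fix a free resolution $\mathbf F\to M$ by finitely generated free modules and let $K=K_s$ be its $s$th syzygy; $K$ is $\fpi$ by Lemma~\ref{fpi}, and for each maximal ideal $m$ the module $K_m$ is the $s$th syzygy of the finitely generated free resolution $\mathbf F\otimes_R R_m$ of $M_m$, so $K_m\in\rG(R_m)$ by Proposition~\ref{GDProp} (using $\rGdim_{R_m}M_m\le s$). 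By Proposition~\ref{Gclass-local} it therefore suffices to prove that $K^*$ is $\fpi$: granting this, $K\in\rG(R)$, hence $\rGdim_R M\le s$ by Proposition~\ref{GDProp}, and together with $\rGdim_R M\ge s$ this is the asserted equality (which also follows from Corollary~\ref{G=rGa} once finiteness is known).

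The hard part will be exactly the claim that $K^{*}$ is $\fpi$, and this is where I would concentrate the argument. The strategy is to transport the hypothesis that $M^{*}$ is $\fpi$ down the dual complex $\Hom_R(\mathbf F,R)$. Writing $K_i$ for the $i$th syzygy of $\mathbf F$ (so $K_0=M$) and dualizing $0\to K_{i+1}\to F_i\to K_i\to 0$, one obtains four-term exact sequences $0\to K_i^{*}\to F_i^{*}\to K_{i+1}^{*}\to \Ext^{1}_R(K_i,R)\to 0$ with $\Ext^{1}_R(K_i,R)\cong \Ext^{i+1}_R(M,R)$ by dimension shifting. Because $\Ext^{i+1}_R(M,R)=0$ for $i\ge s$, these are short exact for $i\ge s$ (so, incidentally, each $K_{i+1}^{*}=\operatorname{im}(F_i^{*}\to F_{i+1}^{*})$ is finitely generated there), while for $0\le i<s$ one chases finiteness through Lemma~\ref{fpi}: starting from $K_0^{*}=M^{*}$ being $\fpi$, the splitting of each four-term sequence reduces ``$K^{*}=K_s^{*}$ is $\fpi$'' to the single assertion that each $\Ext^{i}_R(M,R)$, $1\le i\le s$, is $\fpi$. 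Proving that these intermediate cohomology modules of $\Hom_R(\mathbf F,R)$ — a complex we know only to be exact above degree $s$ — are finitely presented is the delicate point I expect to cost the most effort; once it is secured, everything above assembles into the stated formula.
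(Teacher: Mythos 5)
Parts (1) and (2) of your proposal are correct and are essentially the paper's own argument: pass to the $n$th syzygy of a resolution by finitely generated free modules, apply Proposition \ref{GDProp} on each side, and use Proposition \ref{Gclass-flat} for ascent, respectively faithfully flat descent, of membership in the restricted G-class.

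Part (3), however, is not yet a proof, and you say so yourself. You follow the same strategy the paper indicates (localize the $s$th syzygy $K$, get $K_m\in\rG(R_m)$ from Proposition \ref{GDProp}, and conclude $K\in\rG(R)$ from Proposition \ref{Gclass-local}), but Proposition \ref{Gclass-local} has the hypothesis that the dual of the module in question is $\fpi$, and the verification that $K^*$ is $\fpi$ is exactly what you leave open. Your reduction of ``$K^*$ is $\fpi$'' to ``$\Ext^i_R(M,R)$ is $\fpi$ for $1\le i\le s$'' via the four-term sequences $0\to K_i^*\to F_i^*\to K_{i+1}^*\to\Ext^{i+1}_R(M,R)\to 0$ and Lemma \ref{fpi} is a valid reduction, but the reduced statement is the nontrivial content and nothing in your outline supplies it: Lemma \ref{ext-fg} cannot be invoked globally, since finiteness of $\rGdim_RM$ is precisely what is being proved, and knowing each $\Ext^i_{R_m}(M_m,R_m)$ is finitely presented over $R_m$ does not globalize over a ring that is not assumed coherent. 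Likewise, the vanishing $\Ext^i_R(K,R)=0$ for $i>0$ only exhibits $K^*$ as the kernel of a map of finitely generated free modules, which without coherence need not even be finitely generated. So the step where the additional hypothesis ``$M^*$ is $\fpi$'' must actually be leveraged --- the step the paper disposes of by applying Proposition \ref{Gclass-local} to the syzygy of a $\rG$-resolution, in parallel with its proof of part (2) --- is acknowledged but not carried out in your write-up; as it stands, the argument for (3) is incomplete.
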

\begin{proof}
Part (1) is clear from Proposition \ref{Gclass-flat}.  For part (2), suppose $\rGdim_S M\otimes_RS =n$.  Let $\mathbf G$ be a $\rG$-resolution of $M$ and  $K$ the $n$th syzygy of $\mathbf G$.  Since $\mathbf G\otimes_RS$ is a $\rG(S)$-resolution of $M\otimes_RS$, we have that $K_n\otimes_RS$ is in $\rG(S)$ by Proposition \ref{GDProp}.   By Proposition \ref{Gclass-flat}, $K$ is in $\text{\rm G}(R)$ and hence $\Gdim_RM\le n$.  Part (3) is proved similarly using Proposition \ref{Gclass-local}.
\end{proof}

The following is a basic change of rings result.

\begin{lemma} \label{Gmodx1}  Let $M$ be in $\rG(R)$ and $x\in R$ a non-zero-divisor on $R$.  Then $M/xM$ is in $\rG(R/(x))$.
\end{lemma}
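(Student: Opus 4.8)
The plan is to verify the three defining conditions of the restricted G-class for $M/xM$ over $\bar R := R/(x)$, making systematic use of the change-of-rings spectral sequences (or, more elementarily, the standard isomorphisms coming from the short exact sequence $0 \to R \xrightarrow{x} R \to \bar R \to 0$). First I would note that since $M$ is torsion-free (being in $\rG(R)$) and $x$ is a non-zero-divisor on $R$, $x$ is also a non-zero-divisor on $M$; hence by Remark \ref{fpi-nzd}, $\overline{M} := M/xM$ is $\fpi^{\bar R}$. This takes care of condition (2) of the $\rG$-class and, once we know $M^* = \Hom_R(M,R)$ is $\fpi^R$ and $x$ is $M^*$-regular, also positions us to get condition (3): indeed $x$ is $R$-regular hence $M^*$-regular (a dual of any module into $R$ is torsion-free), so $\overline{M^*} := M^*/xM^*$ is $\fpi^{\bar R}$ by Remark \ref{fpi-nzd} again.

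Next I would establish the key duality identification $(\overline M)^{\vee} \cong \overline{M^*}$, where $(-)^{\vee} = \Hom_{\bar R}(-,\bar R)$. This is the familiar isomorphism $\Hom_R(M, R/(x)) \cong \Hom_R(M,R)/x\Hom_R(M,R)$, which holds because applying $\Hom_R(M,-)$ to $0 \to R \xrightarrow{x} R \to \bar R \to 0$ gives an exact sequence $0 \to M^* \xrightarrow{x} M^* \to \Hom_R(M,\bar R) \to \Ext^1_R(M,R)$, and $\Ext^1_R(M,R) = 0$ since $M \in \text{G}(R)$; and $\Hom_R(M,\bar R) = \Hom_{\bar R}(\overline M, \bar R)$ by adjunction. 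Now for the vanishing conditions: to show $\Ext^i_{\bar R}(\overline M, \bar R) = 0$ for $i > 0$, I would use that $x$ is a non-zero-divisor on both $R$ and $M$, so $\overline M$ has the same $\Ext$-into-$\bar R$ behavior as $M$ has into $R$; concretely, the standard change-of-rings isomorphism $\Ext^i_{\bar R}(M/xM, \bar R) \cong \Ext^i_R(M, \bar R)$ (valid since $x$ is $M$-regular — one resolves $M$ by finitely generated frees over $R$, reduces mod $x$ to get a $\bar R$-free resolution of $\overline M$ by Remark \ref{fpi-nzd}, and compares) combined with the long exact sequence of $\Ext_R(M,-)$ applied to $0 \to R \xrightarrow{x} R \to \bar R \to 0$, in which the relevant terms $\Ext^i_R(M,R)$ vanish for $i > 0$, forces $\Ext^i_R(M,\bar R) = 0$ for $i > 0$. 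The same argument applied to $M^*$ — using that $x$ is $M^*$-regular and $\Ext^i_R(M^*,R) = 0$ for $i>0$ — gives $\Ext^i_{\bar R}(\overline{M^*}, \bar R) = 0$ for $i > 0$, i.e. $\Ext^i_{\bar R}((\overline M)^{\vee}, \bar R) = 0$ for $i > 0$ via the identification of the previous step.

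Finally, for the reflexivity condition (3), I would show the canonical map $\overline M \to (\overline M)^{\vee\vee}$ is an isomorphism. Iterating the duality identification gives $(\overline M)^{\vee\vee} \cong \overline{M^{**}}$ compatibly with the canonical maps, so reducing the isomorphism $\rho: M \to M^{**}$ modulo $x$ yields the desired isomorphism — here one uses that tensoring the isomorphism $\rho$ with $\bar R$ stays an isomorphism, and that the functorial identifications are compatible with the biduality maps (a routine diagram chase). I expect the main obstacle to be bookkeeping the naturality of the identification $\Hom_R(M,\bar R) \cong \Hom_R(M,R) \otimes_R \bar R$ carefully enough that it is compatible with the evaluation/biduality maps on both sides, and ensuring all the $\fpi$ hypotheses needed to invoke Remark \ref{fpi-nzd} for $M$, $M^*$, and $M^{**}$ are in place; none of these should present a genuine difficulty, but they are exactly the points where the non-Noetherian setting demands care. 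Alternatively, since membership in $\rG$ is local (Proposition \ref{Gclass-local}) and the whole construction commutes with localization, one could reduce to the quasi-local case, though this does not seem to simplify matters substantially.
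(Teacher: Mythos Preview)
Your proposal is correct and follows essentially the same approach as the paper. The paper's own proof records only the torsion-freeness of $M$ and $M^*$ (giving that $x$ is regular on both, so Remark~\ref{fpi-nzd} yields the $\fpi^{R/(x)}$ conditions) and then defers to \cite[Lemma~1.3.5]{Christensen2000a} for the remaining verifications; your write-up supplies exactly those deferred details---the change-of-rings identification $(\overline{M})^{\vee}\cong \overline{M^*}$ via the long exact sequence of $\Hom_R(M,-)$ on $0\to R\xrightarrow{x}R\to \bar R\to 0$, the vanishing of $\Ext^i_{\bar R}(\overline{M},\bar R)$ and $\Ext^i_{\bar R}(\overline{M^*},\bar R)$ for $i>0$, and the reflexivity by iterating the duality identification---which is precisely Christensen's argument.
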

\begin{proof} Since both $M$ and $M^*$ are torsion-free, $x$ is a non-zero-divisor on $M$ and $M^*$.  Thus,  $M/xM$ and $M^*/xM^*$ are $\fpi^{R/(x)}$ by Remark \ref{fpi-nzd}.   The remainder of the proof is identical to Lemma 1.3.5 of  \cite{Christensen2000a}.
\end{proof}

\begin{lemma} \label{ext-fg}  Let $M$ be an $R$-module such that $\rGdim_RM\le n$.  Then $\Ext_R^n(M,R)$ is finitely presented.
\end{lemma}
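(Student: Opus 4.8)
The plan is to exhibit $\Ext^n_R(M,R)$ as a quotient of a finitely presented module by a finitely generated submodule, and then to conclude via Lemma \ref{fpi}(1). Since $\rGdim_R M \le n < \infty$, the module $M$ has a $\rG$-resolution of finite length and is therefore $\fpi$; hence $M$ admits a free resolution $\mathbf F$ in which each $F_i$ is finitely generated and free. This $\mathbf F$ is in particular a $\rG$-resolution of $M$, so by Proposition \ref{GDProp} its $n$th syzygy $K_n$ lies in $\rG(R)$. In particular $K_n^*$ is $\fpi$, hence finitely presented.

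If $n = 0$ we are done at once, since then $M = K_0$ is in $\rG(R)$ and $\Ext^0_R(M,R) = M^*$ is $\fpi$. So assume $n \ge 1$. Splitting $\mathbf F$ into short exact sequences $0 \to K_{i+1} \to F_i \to K_i \to 0$ (with $K_0 = M$) and using that $\Ext^j_R(F_i,R) = 0$ for $j > 0$, a standard dimension shift gives $\Ext^n_R(M,R) \cong \Ext^1_R(K_{n-1},R)$. Applying $\Hom_R(-,R)$ to $0 \to K_n \to F_{n-1} \to K_{n-1} \to 0$ then produces an exact sequence $F_{n-1}^* \to K_n^* \to \Ext^1_R(K_{n-1},R) \to 0$, so that $\Ext^n_R(M,R) \cong \operatorname{coker}(F_{n-1}^* \to K_n^*)$.

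To finish, I would note that $F_{n-1}^*$ is finitely generated free, so the image $I$ of $F_{n-1}^* \to K_n^*$ is a finitely generated (that is, $\fp{0}$) submodule of the finitely presented (that is, $\fp{1}$) module $K_n^*$. The short exact sequence $0 \to I \to K_n^* \to \Ext^n_R(M,R) \to 0$ together with Lemma \ref{fpi}(1) then shows that $\Ext^n_R(M,R)$ is $\fp{1}$, i.e., finitely presented.

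I do not expect a substantive obstacle here; the only steps that genuinely need attention are (a) observing that a resolution of $M$ by finitely generated free modules is itself a $\rG$-resolution, so that Proposition \ref{GDProp} delivers $K_n \in \rG(R)$ — and hence $K_n^*$ finitely presented — rather than merely $K_n \in \text{\rm G}(R)$, which would not suffice; and (b) dispatching the degenerate case $n = 0$ separately.
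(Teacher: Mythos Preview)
Your proof is correct and follows essentially the same strategy as the paper's: both exhibit $\Ext^n_R(M,R)$ as the cokernel of a map from a finitely generated module into a finitely presented one coming from the $\rG$-structure, and then invoke Lemma \ref{fpi}. The only difference is cosmetic---the paper reaches this point by induction on $n$ (peeling off one layer via Proposition \ref{Gdim-ses}), whereas you perform the dimension shift in one stroke and appeal to Proposition \ref{GDProp} to obtain $K_n^*$ finitely presented directly.
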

\begin{proof} Since $\Ext^i_R(M,R)=0$ for $i>\rGdim_RM$, we may assume $n=\rGdim_RM$.  If $n=0$ the result is clear.  Suppose $n>0$ and let $0\to K\to G\to M\to 0$ be exact where $G$ is in $\rG(R)$.  By Proposition \ref{Gdim-ses}, $\rGdim_RK=n-1$.    By induction, $\Ext^{n-1}_R(K,R)$ is finitely presented.  By the exactness of
$$ \Ext^{n-1}_R(G,R)\to \Ext^{n-1}_R(K,R)\to \Ext^n_R(M,R)\to 0$$
we see that $\Ext^n_R(M,R)$ is finitely presented (e.g., Lemma \ref{fpi}).
\end{proof}

Let $J(R)$ denote the Jacobson radical of $R$.

\begin{prop} \label{Gmodx2} Let $M$ be an $R$-module which is $\fpi$ and  $x\in J(R)$  a non-zero-divisor on $M$ and $R$.    Suppose $M/xM$ is in $\rG(R/(x))$.
\begin{enumerate}
\item If $\rGdim_RM<\infty$ then $M$ is in $\rG(R)$.
\item If $R$ is coherent then $M$ is in $\rG(R)$.
\end{enumerate}
\end{prop}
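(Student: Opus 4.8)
The plan is to reduce everything to showing that $M$ is in $\rG(R)$, i.e., that $\Ext^i_R(M,R)=0$ for $i>0$, that $\Ext^i_R(M^*,R)=0$ for $i>0$, that $M^*$ is $\fpi$, and that $M\to M^{**}$ is an isomorphism. The two hypotheses (finite $\rG$-dimension in (1), coherence in (2)) will both be used to guarantee $\Gdim_RM<\infty$, so that Lemma \ref{Gdim=0} becomes available: in case (2), coherence together with $M$ being $\fpi$ forces $M$ finitely presented, hence $M^*$ finitely presented (coherence) and $\rGdim_RM=\rGdim_R M$ is defined — but we must still produce finiteness, which we will obtain along the way, or alternatively observe that the argument of (1) can be run using $\Gdim$ in place of $\rGdim$ since over a coherent ring the two agree (Corollary \ref{G=rGb}). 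Thus the heart of the matter is case (1): assume $\rGdim_RM=d<\infty$ and $M/xM\in\rG(R/(x))$, and deduce $d=0$.

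First I would set up a change-of-rings comparison for Ext. Since $x$ is a non-zero-divisor on both $R$ and $M$, we have $\Tor^R_i(M,R/(x))=0$ for $i>0$, so tensoring a finite free resolution of $M$ with $R/(x)$ gives a finite free resolution of $M/xM$ over $R/(x)$; this also re-proves $M/xM$ is $\fpi^{R/(x)}$. From the resulting spectral sequence (or the standard long exact sequence coming from $0\to M\xrightarrow{x} M\to M/xM\to 0$ applied to $\Hom_R(-,R)$ together with the isomorphism $\Ext^i_R(M/xM,R)\cong\Ext^{i-1}_{R/(x)}(M/xM,R/(x))$ via the depth-sensitivity of Ext under a non-zero-divisor on the second argument) one gets that $\Ext^i_R(M,R)\otimes_R R/(x)$ is sandwiched between vanishing Ext groups over $R/(x)$ once $i>0$, because $M/xM\in\rG(R/(x))$ forces $\Ext^i_{R/(x)}(M/xM,R/(x))=0$ for $i>0$. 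More precisely, the exact sequence
\begin{equation*}
\Ext^i_R(M,R)\xrightarrow{\ x\ }\Ext^i_R(M,R)\to \Ext^{i+1}_R(M/xM,R)
\end{equation*}
combined with $\Ext^{i+1}_R(M/xM,R)\cong\Ext^i_{R/(x)}(M/xM,R/(x))=0$ for $i>0$ shows $x$ acts surjectively on $\Ext^i_R(M,R)$ for $i>0$; since this module is finitely presented (Lemma \ref{ext-fg}, after reducing to $i=d$ and descending) and $x\in J(R)$, Nakayama's lemma gives $\Ext^i_R(M,R)=0$ for all $i>0$. Then Lemma \ref{Gdim=0} yields $\Gdim_RM=0$, and in case (1) $\rGdim_RM=0$ by Corollary \ref{G=rGa}; hence $M\in\rG(R)$.

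For case (2): with $R$ coherent and $M$ $\fpi$, $M$ is finitely presented, so $M^*$ is finitely presented and $\rGdim_RM=\Gdim_RM$ by Corollary \ref{G=rGb}. Running the identical Nakayama argument with $\Gdim$ in place of $\rGdim$ (using that $\Ext^n_R(M,R)$ is finitely generated over a coherent ring whenever it is a subquotient of a finite module, so Nakayama still applies) shows $\Ext^i_R(M,R)=0$ for $i>0$; but to invoke Lemma \ref{Gdim=0} we need $\Gdim_RM<\infty$ a priori, which is where I would instead argue directly that $M$ is reflexive and that $M^*\in\rG(R)$: apply case (1) — or rather the Nakayama computation — to $M^*$ in place of $M$ as well, using $(M/xM)^*\cong M^*/xM^*$ (valid since $x$ is a non-zero-divisor on $M^*$, as reflexive modules are torsion-free) and $(M/xM)^*\in\rG(R/(x))$, to kill $\Ext^i_R(M^*,R)$ for $i>0$; and handle reflexivity of $M$ by the five-lemma applied to the comparison of $0\to K\to M\to M^{**}\to C\to 0$ with its reduction mod $x$, where $K/xK$ and $C/xC$ vanish because $M/xM$ is reflexive over $R/(x)$ — Nakayama then gives $K=C=0$.

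The main obstacle I expect is the change-of-rings isomorphism $\Ext^{i+1}_R(M/xM,R)\cong\Ext^i_{R/(x)}(M/xM,R/(x))$ and the accompanying control of $x$-divisibility on $\Ext^i_R(M,R)$: over a non-Noetherian ring one must be careful that the relevant Ext modules are $\fpi$ (so that Nakayama applies) and that the long exact sequences genuinely splice as claimed. Lemma \ref{ext-fg} is the tool that removes this difficulty in case (1), and coherence removes it in case (2); the reflexivity verification in case (2) is then routine diagram-chasing once $K/xK=C/xC=0$ is known.
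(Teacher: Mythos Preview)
Your approach is essentially the paper's: for (1) a Nakayama argument on the top $\Ext^n_R(M,R)$ (finitely generated by Lemma~\ref{ext-fg}) via a change-of-rings identification---the paper applies $\Hom_R(M,-)$ to $0\to R\xrightarrow{x} R\to R/(x)\to 0$ and uses $\Ext^i_R(M,R/(x))\cong\Ext^i_{R/(x)}(M/xM,R/(x))$, whereas you dualize $0\to M\xrightarrow{x} M\to M/xM\to 0$ and invoke the Rees isomorphism, but the effect is identical; for (2) the paper likewise defers to the direct verification (reflexivity via Nakayama on the finitely generated kernel and cokernel of $M\to M^{**}$, coherence supplying the finite generation) that you outline.

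Two small repairs in your sketch of (2): your justification that $x$ is a non-zero-divisor on $M^*$ (``reflexive modules are torsion-free'') is circular, since you have not yet shown $M^*$ reflexive---the correct reason is simply that $M^*=\Hom_R(M,R)$ embeds in a finite free module and is therefore torsion-free; and the identification $M^*/xM^*\cong (M/xM)^{\chk}$ requires $\Ext^1_R(M,R)=0$ (from the long exact sequence for $\Hom_R(M,-)$ on $0\to R\xrightarrow{x}R\to R/(x)\to 0$), which you have already established but should cite at that point.
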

\begin{proof} We prove part (1).  The proof of (2) is similar to the proof of  Lemma 1.4.4 of \cite{Christensen2000a}, where one uses  coherence to ensure the kernel and cokernel of the canonical map $M\to M^{**}$ are finitely generated so that Nakayama's lemma can be applied.
Let $\rGdim_RM=n$ and assume $n>0$.  As $x$ is a non-zero-divisor on $M$ and $R$, $\Ext^i_R(M, R/(x))\cong \Ext^i_{R/(x)}(M/xM, R/(x))=0$ for $i>0$.   Then $\Ext^n_R(M,R)$ is nonzero and finitely generated (Lemma \ref{ext-fg}).  Applying $\Hom_R(M,-)$ to the short exact sequence $0\to R\xrightarrow{x} R\to R/(x)\to 0$ yields the exact sequence
$$
\begin{CD}
\cdots @>>>\Ext^n_R(M,R) @>x>> \Ext^n_R(M,R) @>>>0,
\end{CD}
$$
which implies by Nakayama's lemma that  $\Ext^n_R(M,R)=0$, a contradiction.  Hence, $n=0$.
\end{proof}

Applying these results to restricted G-dimension, we have:

\begin{prop} \label{Gdim-modx} Let $M$ be  an $R$-module which  is $\fpi$ and $x\in R$ a non-zero-divisor on $M$ and $R$.   Then
\begin{enumerate}
\item $\rGdim_{R/(x)}M/xM \le \rGdim_RM$.
\item If $x\in J(R)$ and either $M$ has finite $\rG$-dimension or $R$ is coherent then $$\rGdim_{R/(x)}M/xM= \rGdim_RM.$$
\end{enumerate}
\end{prop}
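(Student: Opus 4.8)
The plan is to prove (1) directly and to deduce (2) from (1) together with Proposition \ref{GDProp} and the lifting result Proposition \ref{Gmodx2}; the base change of syzygies modulo $x$ is the common technical ingredient.

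\emph{Part (1).} We may assume $n:=\rGdim_RM<\infty$. Fix a free resolution $\mathbf F$ of $M$ consisting of finitely generated free $R$-modules, and let $K_i$ denote its $i$th syzygy (so $K_0=M$). Since $x$ is a non-zero-divisor on $R$ and on $M$, $\Tor^R_j(M,R/(x))=0$ for all $j>0$, so $\mathbf F\otimes_RR/(x)$ is a free resolution of $M/xM$ over $R/(x)$, as in the proof of Remark \ref{fpi-nzd}. Moreover, for $i\ge 1$ the syzygy $K_i$ is a submodule of a free module, hence $x$ is a non-zero-divisor on it and $\Tor^R_1(K_i,R/(x))=0$; therefore each short exact sequence $0\to K_{i+1}\to F_i\to K_i\to 0$ stays exact after $-\otimes_RR/(x)$, and inductively the $i$th syzygy of $\mathbf F\otimes_RR/(x)$ is $K_i/xK_i$. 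By Proposition \ref{GDProp}, $K_n$ lies in $\rG(R)$; by Lemma \ref{Gmodx1}, $K_n/xK_n$ lies in $\rG(R/(x))$; and applying Proposition \ref{GDProp} (in its $\rG$-form) to the resolution $\mathbf F\otimes_RR/(x)$ gives $\rGdim_{R/(x)}M/xM\le n$.

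\emph{Part (2).} Set $n:=\rGdim_{R/(x)}M/xM$. By (1), $n\le\rGdim_RM$, so it suffices to prove $\rGdim_RM\le n$, and we may assume $n<\infty$ (otherwise (1) already forces equality). Retaining the notation of (1), $K_n/xK_n$ is the $n$th syzygy of the free $R/(x)$-resolution $\mathbf F\otimes_RR/(x)$ of $M/xM$, so $K_n/xK_n\in\rG(R/(x))$ by Proposition \ref{GDProp}. Now $K_n$ is $\fpi$, and $x\in J(R)$ is a non-zero-divisor on $K_n$ and on $R$, so Proposition \ref{Gmodx2} applies: if $R$ is coherent, part (2) of that proposition gives $K_n\in\rG(R)$; if instead $M$ has finite $\rG$-dimension, then $K_n$ also has finite $\rG$-dimension (apply Proposition \ref{Horseshoe}(2) to the syzygy sequences $0\to K_{i+1}\to F_i\to K_i\to 0$ successively), and part (1) of that proposition gives $K_n\in\rG(R)$. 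In either case Proposition \ref{GDProp} yields $\rGdim_RM\le n$, and combined with $n\le\rGdim_RM$ we conclude $\rGdim_RM=n=\rGdim_{R/(x)}M/xM$.

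\emph{Main obstacle.} The argument rests on two points. The first is the behavior of syzygies under reduction modulo $x$: one must verify that $\mathbf F\otimes_RR/(x)$ remains a resolution and that its syzygies are the reductions $K_i/xK_i$. This is routine once one notes that $x$ is a non-zero-divisor on $R$, on $M$, and on every higher syzygy (these being torsion-free submodules of free modules), so all relevant $\Tor$'s vanish. The real content is the lifting statement Proposition \ref{Gmodx2}, which is already available; that is where Nakayama's lemma enters, and it is precisely why the extra hypotheses $x\in J(R)$ together with finiteness of $\rG$-dimension or coherence of $R$ are needed in part (2) but not in part (1). The remaining degenerate cases ($M=0$, or $\rGdim_RM=\infty$) are immediate.
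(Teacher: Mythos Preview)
Your proof is correct. Both your argument and the paper's rest on the same two ingredients, Lemma \ref{Gmodx1} for part (1) and Proposition \ref{Gmodx2} for part (2), but the organization differs. The paper proceeds by induction on the $\rG$-dimension: in each part it peels off one step via a short exact sequence $0\to K\to G\to M\to 0$ with $G\in\rG(R)$, reduces modulo $x$, and invokes the inductive hypothesis on $K$ (using Proposition \ref{Gdim-ses} to track how the dimension drops). You instead fix a single free resolution $\mathbf F$ once and for all, observe that $\mathbf F\otimes_RR/(x)$ is a free $R/(x)$-resolution of $M/xM$ whose syzygies are exactly the reductions $K_i/xK_i$, and then apply Proposition \ref{GDProp} directly to translate membership of the $n$th syzygy in the $\rG$-class into a bound on $\rG$-dimension. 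Your route is a bit more streamlined, replacing the recursion by a single global appeal to Proposition \ref{GDProp}; the paper's inductive structure is more in keeping with the style used throughout that section. Neither approach gains or loses generality.
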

\begin{proof}
For part (1), we may assume $\rGdim_RM=n<\infty$.  If $n=0$ the result follows by Lemma \ref{Gmodx1}.
If $n>0$ then there exists a short exact sequence $0\to K\to G\to M\to 0$ where $G$ is in $\rG(R)$ and $\rGdim_RK=n-1$.  Since $x$ is a non-zero-divisor on $M$ and $R$, the sequence
$0\to K/xK\to G/xG\to M/xM\to 0$ is exact.  By the induction hypothesis, $\rGdim_{R/(x)}K/xK\le n-1$
and $G/xG$ is in $G(R/(x))$ by the $n=0$ case.  Hence, $\rGdim_{R/(x)}M/xM\le n$.

For part (2), it is enough to show $\rGdim_RM\le \rGdim_{R/(x)}M/xM$.  Again, we may assume $\rGdim_{R/(x)}M/xM=n<\infty$.  If $n=0$ the result follows by Proposition \ref{Gmodx2}.    Otherwise,  consider a short exact sequence $0\to K\to G\to M\to 0$ where $G$ is in $\rG(R)$.  Then
$0\to K/xK\to G/xG\to M/xM\to 0$ is exact and $G/xG$ is in $\rG(R/(x))$.  By Proposition \ref{Gdim-ses} we have $\rGdim_{R/(x)}K/xK=n-1$.  Since $x$ is a non-zero-divisor on $K$ we have by the induction hypothesis that $\rGdim_RK\le n-1$.  Hence, $\rGdim_RM\le n$.
\end{proof}

We will need the following lemma in the proof of Theorem \ref{changeofrings1}:

\begin{lemma} \label{longexactseq} Let $n$ be a nonnegative integer and consider an exact sequence of $R$-modules
$$0\to M\to A_0\to A_1\to \cdots \to A_n\to 0.$$  Suppose that for $1\le i\le n$ we have $\Ext^j_R(A_i,R)=0$ for $1\le j\le i$.  Then
$$0\to A_n^*\to A_{n-1}^*\to \cdots \to A_0^*\to M^*\to 0$$
is exact.
\end{lemma}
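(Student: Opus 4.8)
The plan is to break the long exact sequence into short exact sequences, record the $\Ext$-vanishing those force on the intermediate images, and then splice together the dualized short exact sequences.

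First I would set $C_{-1}=M$ and, for $0\le i\le n-1$, let $C_i=\im(A_i\to A_{i+1})$, so that we obtain short exact sequences $0\to C_{i-1}\to A_i\to C_i\to 0$ for $0\le i\le n-1$, with $C_{n-1}=A_n$ since $A_{n-1}\to A_n$ is onto. Applying $\Hom_R(-,R)$ to $0\to C_{i-1}\to A_i\to C_i\to 0$ gives an exact sequence $0\to C_i^*\to A_i^*\to C_{i-1}^*\to \Ext^1_R(C_i,R)$, so once we know $\Ext^1_R(C_i,R)=0$ for $0\le i\le n-1$, each $0\to C_i^*\to A_i^*\to C_{i-1}^*\to 0$ is short exact. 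Since the image of $A_i^*\to C_{i-1}^*\hookrightarrow A_{i-1}^*$ is exactly $C_{i-1}^*=\ker(A_{i-1}^*\to C_{i-2}^*)$, splicing these (together with $C_{n-1}^*=A_n^*$ at the top and the surjection $A_0^*\twoheadrightarrow M^*$ at the bottom) yields the asserted exact sequence $0\to A_n^*\to A_{n-1}^*\to\cdots\to A_0^*\to M^*\to 0$; exactness at each $A_i^*$ and at $M^*$ is then a routine diagram chase.

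The substantive point is the vanishing $\Ext^1_R(C_i,R)=0$, which I would obtain from the stronger claim that $\Ext^j_R(C_i,R)=0$ for $1\le j\le i+1$, proved by downward induction on $i$ from $i=n-1$ to $i=0$. The base case is immediate: $C_{n-1}=A_n$, and the hypothesis gives $\Ext^j_R(A_n,R)=0$ for $1\le j\le n=(n-1)+1$. For the inductive step, feed $0\to C_{i-1}\to A_i\to C_i\to 0$ into the long exact $\Ext$ sequence: for $1\le j\le i$ we have $\Ext^j_R(C_i,R)=0$ (induction, as $j\le i+1$), $\Ext^j_R(A_i,R)=0$ (hypothesis, as $j\le i$), and $\Ext^{j+1}_R(C_i,R)=0$ (induction, as $j+1\le i+1$), forcing $\Ext^j_R(C_{i-1},R)=0$ for $1\le j\le i=(i-1)+1$. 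In particular $\Ext^1_R(C_i,R)=0$ for all $0\le i\le n-1$, as needed (the case $n=0$ being trivial since then $M\cong A_0$).

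I do not expect a genuine obstacle; the only thing demanding care is the index bookkeeping in the downward induction — keeping the range $1\le j\le i+1$ for the $C_i$ aligned with the given range $1\le j\le i$ for the $A_i$ — and checking that the splicing really is exact degree by degree, which is standard.
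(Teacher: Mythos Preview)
Your proof is correct and follows essentially the same approach as the paper: both break the long exact sequence into short exact sequences via the intermediate images (the paper's $K$ is your $C_{n-2}$), establish the needed $\Ext$-vanishing on these images, and splice the dualized short exact sequences. The only difference is organizational: the paper packages this as an induction on $n$, peeling off the top short exact sequence $0\to K\to A_{n-1}\to A_n\to 0$ and recursing, whereas you unfold the whole induction at once via an explicit downward induction on the index $i$ for the $\Ext$-vanishing of the $C_i$.
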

\begin{proof}
We use induction on $n$, the cases $n=0$ and $n=1$ being trivial.  If $n>1$ let $K$ be the kernel of the map $A_{n-1}\to A_n$.  It follows easily from the short exact sequence
$0\to K\to A_{n-1}\to A_n\to 0$  that
$0\to A_n^*\to A_{n-1}^*\to K^*\to 0$ is exact and $\Ext^{i}_R(K,R)=0$ for $1\le i \le n-1$   By the induction hypothesis, we have that
$$0\to K^*\to A_{n-2}^*\to \cdots \to A_0^*\to M^*\to 0$$
is exact.  Patching the two sequences we obtain the desired result.
\end{proof}

The proof of the following important change of rings theorem is similar to the argument of \cite[Proposition 4.35]{Auslander1969a} except for a simplification (using the lemma above) which replaces a ``tedious calculation'' \cite[p. 137]{Auslander1969a}.

\begin{thm} \label{changeofrings1} Let $M$ be a nonzero  $R$-module such that $\rGdim_RM<\infty$ and suppose $x$ is a non-zero-divisor on $R$ such that $xM=0$.  Then $$\rGdim_{R/(x)}M=\rGdim_RM-1.$$
\end{thm}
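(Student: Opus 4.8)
The plan is a comparison of $\Ext$-modules over $R$ and over $S:=R/(x)$, using Corollary \ref{G=rGa}; I would \emph{not} induct on $n:=\rGdim_R M$ directly, since neither the first $R$-syzygy of $M$ nor its first $S$-syzygy has strictly smaller $\rG$-dimension over the relevant ring. First a preliminary observation: because $x$ is a non-zero-divisor on $R$ while $M\ne 0$ and $xM=0$, the module $M$ is torsion over $R$, so $\Hom_R(M,R)=0$ and $M$ cannot embed in a free module; hence $M\notin\rG(R)$ and $n\ge 1$. As scaffolding, pick a finitely generated free $F$ with a surjection $F\to M$ and kernel $K$; then $0\to K\to F\to M\to 0$ is exact with $\rGdim_R K=n-1$ by Proposition \ref{Gdim-ses}, and since $xF\inc K$ and $xK\inc xF$ the module $L:=K/xF$ is an $S$-module fitting into $0\to L\to F/xF\to M\to 0$ over $S$ (with $F/xF$ free over $S$) and $0\to F\xrightarrow{x}K\to L\to 0$ over $R$.

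The comparison has an easy half and a hard half. For the easy half, apply $\Hom_R(M,-)$ to $0\to R\xrightarrow{x}R\to S\to 0$: since $xM=0$, multiplication by $x$ is the zero map on every $\Ext^i_R(M,R)$ and $\Ext^i_R(M,S)$, so the long exact sequence breaks into short exact sequences $0\to\Ext^i_R(M,R)\to\Ext^i_R(M,S)\to\Ext^{i+1}_R(M,R)\to 0$, and, using $\Hom_R(M,R)=0$ and $\Hom_R(M,S)=\Hom_S(M,S)$, $\Ext^1_R(M,R)\cong\Hom_S(M,S)$. The hard half is to promote this to $\Ext^{i+1}_R(M,R)\cong\Ext^i_S(M,S)$ for all $i\ge 0$; this is exactly the point at which Auslander and Bridger do a tedious calculation with explicit matrices. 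I would avoid that by relating a finite $\rG$-resolution $0\to G_n\to\cdots\to G_0\to M\to 0$ of $M$ over $R$ to a resolution over $S$: reduce modulo $x$, dualize, and use Lemma \ref{longexactseq} to keep the successive dual sequences exact as the needed $\Ext$-vanishing becomes available, doing the dimension bookkeeping with no matrices. (A clean but heavier alternative is the change-of-rings spectral sequence, which collapses because $\Ext^q_R(S,R)=0$ for $q\ne 1$; the paper's route, which I would follow, stays inside its own toolbox.) Along the way the relevant $\Ext$-modules are finitely presented by Lemma \ref{ext-fg}, so all $\rG$-dimensions in sight are defined.

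Granting $\Ext^{i+1}_R(M,R)\cong\Ext^i_S(M,S)$, the theorem follows. One first checks $\rGdim_S M<\infty$: this is extracted from the dualization above together with Lemma \ref{Gmodx1} (reductions mod $x$ of members of $\rG(R)$ lie in $\rG(S)$) and the Horseshoe estimates of Proposition \ref{Horseshoe}, which produce a finite $\rG(S)$-resolution of $M$. Then Corollary \ref{G=rGa} applies on both sides: over $R$, since $\Ext^0_R(M,R)=\Hom_R(M,R)=0$, it gives $\rGdim_R M=\sup\{\,i\ge 1\mid\Ext^i_R(M,R)\ne 0\,\}$, and over $S$ it gives $\rGdim_S M=\sup\{\,i\ge 0\mid\Ext^i_S(M,S)\ne 0\,\}=\sup\{\,i\ge 0\mid\Ext^{i+1}_R(M,R)\ne 0\,\}=\rGdim_R M-1$. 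The main obstacle, as signalled above, is the hard half of the comparison — equivalently, manufacturing a finite $\rG(S)$-resolution of $M$ out of one over $R$ in spite of the extra homology ($\Tor^R_1(M,S)\cong M\ne 0$) that appears when an $R$-resolution is reduced modulo $x$; this is precisely where Lemma \ref{longexactseq} is meant to carry the load, and where one has to be vigilant that every module remains $\fpi$ over the ring over which it is being considered.
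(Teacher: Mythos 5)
Your endgame (the Rees-type comparison $\Ext^{i+1}_R(M,R)\cong\Ext^i_{R/(x)}(M,R/(x))$ followed by Corollary \ref{G=rGa} over both rings) would indeed finish the proof, but only after one knows $\rGdim_{R/(x)}M<\infty$, and that finiteness is exactly the hard content of the theorem; your proposal never actually establishes it. Corollary \ref{G=rGa} and Proposition \ref{GDProp}(2) both take finiteness as a hypothesis, and no amount of $\Ext$-vanishing can certify finite $\rG$-dimension by itself. The mechanism you gesture at --- reduce a finite $\rG(R)$-resolution modulo $x$, dualize, invoke Lemma \ref{longexactseq}, Lemma \ref{Gmodx1} and Proposition \ref{Horseshoe} --- does not produce an $\rG(R/(x))$-resolution: since $\Tor_1^R(M,R/(x))\cong M\neq 0$, the reduced complex has homology $M$ in degree one, and if you truncate and splice you find that finiteness of $\rGdim_{R/(x)}$ of the resulting kernels is equivalent to finiteness of $\rGdim_{R/(x)}M$ itself, so the argument is circular. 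Proposition \ref{Horseshoe} works over a single fixed ring and cannot transfer finiteness from $R$ to $R/(x)$, and Lemma \ref{longexactseq} only dualizes an already exact sequence under suitable $\Ext$-vanishing; neither manufactures the needed resolution. You also misplace the difficulty: the isomorphism $\Ext^{i+1}_R(M,R)\cong\Ext^i_{R/(x)}(M,R/(x))$ is a standard change-of-rings fact which the paper simply cites (Matsumura, Lemma 18.2); the ``tedious calculation'' of Auslander--Bridger that Lemma \ref{longexactseq} replaces concerns showing $M$ is reflexive over $R/(x)$ when $\rGdim_RM=1$, i.e.\ precisely the membership statement your sketch glosses over.

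Your reason for rejecting induction is also mistaken, and induction is how the paper closes this gap. For $n=\rGdim_RM>1$, take a surjection $G\to M$ with $G\in\rG(R)$ and let $K=\ker(G/xG\to M)$, a first $R/(x)$-syzygy of $M$: then $\rGdim_R(G/xG)=1$, Proposition \ref{Horseshoe} (over $R$) gives $\rGdim_RK<\infty$, and an $\Ext$ count via Proposition \ref{GDProp} gives $\rGdim_RK=n-1$; since $xK=0$, the induction hypothesis (the induction being on $\rGdim_R$ of modules killed by $x$) applies to $K$ and yields $\rGdim_{R/(x)}K=n-2$, whence $\rGdim_{R/(x)}M=n-1$ by Proposition \ref{Gdim-ses}. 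So the first $R/(x)$-syzygy does carry a strictly smaller invariant --- over $R$, not over $R/(x)$ --- and that is what drives the induction. The base case $n=1$ is where the real work happens: from $\Tor_1^R(M,R/(x))\cong M$ one gets the four-term exact sequence $0\to M\to G_1/xG_1\to G_0/xG_0\to M\to 0$ over $R/(x)$, and double dualization via Lemma \ref{longexactseq}, Lemma \ref{Gmodx1} and the five lemma shows $M\in\rG(R/(x))$. If you wish to keep your $\Ext$-comparison endgame you must first supply essentially this inductive construction (or an equivalent one) to prove $\rGdim_{R/(x)}M<\infty$; as written, the proposal assumes the crux.
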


\begin{proof}  We use induction on $\rGdim_RM$.  Note that since $x$ is a non-zero-divisor on $R$ and annihilates $M$, $M$ cannot be reflexive.  Hence $\rGdim_RM\ge 1$. Suppose $\rGdim_RM=1$.  Then there exists an exact sequence $0\to G_1\to G_0\to M\to 0$ such that $G_1$ and $G_0$ are in $\rG(R)$. Let $\ov{R}$ denote $R/(x)$.  For an $\ov{R}$-module $N$ let $N^{\chk}$ denote $\Hom_{\ov{R}}(N,\ov{R})$ (to distinguish this dual from $N^*=\Hom_R(N,R)$).
As $x$ is a non-zero-divisor on $R$ and $xM=0$, we have that $M^*=0$ and $\Ext^{i+1}_{R}(M,R)\cong \Ext^i_{\ov{R}}(M,\ov{R})$ for $i\ge 0$ by \cite[Lemma 18.2]{Matsumura1986a}.  Hence, $M^{\chk}\cong \Ext^1_R(M,R)$ and $\Ext^i_{\ov{R}}(M,\ov{R})=0$ for all $i>0$.  Applying $\Hom_R(-,R)$ to the short exact sequence above, we have
$$0\to G_0^{\ast} \to G_1^{\ast}\to M^{\chk}\to 0.$$
Thus, $\rGdim_RM^{\chk}=1$.   Repeating the above argument with $M^{\chk}$ in place of $M$, we get
$\Ext^i_{\ov{R}}(M^{\chk},\ov{R})=0$ for all $i>0$.  It remains to show that $M$ is reflexive as an $\ov{R}$-module.    It is readily seen by the assumptions on $x$ that $\Tor_1^R(M,R/(x))\cong M$ and $\Tor_1(G_0,R/(x))=0$.    Hence we have an exact sequence of $\ov{R}$-modules
$$0\to M\to G_1/xG_1\to G_0/xG_0\to M\to 0.$$  Note that by Lemma \ref{Gmodx1}, $G_1/xG_1$ and $G_0/xG_0$ are in $\rG(\ov{R})$.  Applying $\Hom_{\ov{R}}(-,\ov{R})$ twice to this exact sequence and using Lemma \ref{longexactseq}, we obtain the exactness of
$$0\to M^{\chk \chk}\to (G_1/xG_1)^{\chk \chk}\to (G_0/xG_0)^{\chk \chk}\to M^{\chk \chk}\to 0.$$
Since $G_0/xG_0$ and $G_1/xG_1$ are reflexive (as $\ov{R}$-modules), we see that $M$ is reflexive by the five lemma.  Hence, $\rGdim_{\ov{R}}M=0$.

Suppose $\rGdim_RM=n>1$.  Let $\phi:G\to M$ be a surjective homomorphism where $G\in \rG(R)$.
Tensoring with $R/(x)$ we get a surjection $\ov{\phi}:G/xG\to M$.  Letting $K$ be the kernel of $\ov{\phi}$
we get a short exact sequence of $\ov{R}$-modules $0\to K\to G/xG\to M\to 0$.  As $x$ is a non-zero-divisor on $G$, $0\to G\xrightarrow{x} G\to G/xG\to 0$ is exact.  Hence, $\rGdim_R G/xG=1$.  (Note that $G/xG$ is nonzero as $M$ is.)   By Proposition \ref{Horseshoe}, $\rGdim_RK<\infty$.   Moreover, as $\rGdim_RM=n>1$, it readily follows  from Proposition \ref{GDProp} that $\Ext^i_R(K,R)=0$ for $i>n-1$ and $\Ext^{n-1}_R(K,R)\neq 0$.  Hence $\rGdim_RK=n-1$.  Since $xK=0$ we have by the induction hypothesis that $\rGdim_{R/(x)}K=n-2$.   Since $G/xG$ is in $\rG(R/(x))$ (Lemma \ref{Gmodx1}), we have $\rGdim_{R/(x)}M=n-1$ by Proposition \ref{Gdim-ses}.
\end{proof}

In the coherent case, we have the same result holding if the assumption $\rGdim_RM<\infty$ is replaced with $\rGdim_{R/(x)}M/xM<\infty$ and $x\in J(R)$:

\begin{prop} \label{changeofrings2} Let $R$ be coherent, $M$ a nonzero finitely presented $R$-module, and  $x\in J(R)$  a non-zero-divisor on $R$ such that $xM=0$.  If $\rGdim_{R/(x)}M<\infty$ then $$\rGdim_{R/(x)}M=\rGdim_RM-1.$$
\end{prop}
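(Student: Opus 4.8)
The plan is to bootstrap from Theorem \ref{changeofrings1} by first establishing that the finiteness hypothesis $\rGdim_{R/(x)}M<\infty$ already forces $\rGdim_RM<\infty$; once that is known, the equality follows immediately from Theorem \ref{changeofrings1}. So the real content is the implication $\rGdim_{R/(x)}M<\infty \Rightarrow \rGdim_RM<\infty$, and this is where the coherence hypothesis and the condition $x\in J(R)$ will be used (mirroring the role they play in Proposition \ref{Gmodx2}(2) and Proposition \ref{changeofrings1} versus Proposition \ref{changeofrings2}).

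First I would set $\ov R=R/(x)$ and induct on $n=\rGdim_{\ov R}M$. Since $x$ is a non-zero-divisor on $R$ and kills $M$, $M$ is not $R$-reflexive, so any attempt to have $\rGdim_RM=0$ fails; thus we expect to show $\rGdim_RM=n+1$. For the base case $n=0$, $M$ is in $\rG(\ov R)$. Here I would take a finite free presentation of $M$ over $R$ (available since $R$ is coherent and $M$ finitely presented, so $M$ is $\fpi$), say with first syzygy $K$, giving $0\to K\to F\to M\to 0$ with $F$ finitely generated free. Since $xM=0$ and $x$ is a non-zero-divisor on $F$, one computes $\Tor_1^R(M,\ov R)\cong xF\cap K$-type term; more usefully, as in the proof of Theorem \ref{changeofrings1}, the standard Rees-type argument gives an exact sequence of $\ov R$-modules relating $M$, $F/xF$, and $K$, and since $F/xF$ is free over $\ov R$ this exhibits $M$ as (up to the relevant syzygy) built from $\rG(\ov R)$-modules. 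Combined with the Ext-comparison $\Ext^{i+1}_R(M,R)\cong\Ext^i_{\ov R}(M,\ov R)$ from \cite[Lemma 18.2]{Matsumura1986a}, the vanishing $\Ext^i_{\ov R}(M,\ov R)=0$ for $i>0$ translates to $\Ext^i_R(M,R)=0$ for $i\ge 2$, leaving only $\Ext^1_R(M,R)\cong M^{\chk}$ to control. The point is that $K$ is finitely presented over $R$ (coherence), and I would aim to show $\rGdim_RK\le 1$, hence $\rGdim_RM\le 1$, using Proposition \ref{Horseshoe} together with the $\ov R$-structure; finiteness of $\rGdim_RM$ is what we need, and then Theorem \ref{changeofrings1} upgrades it to equality $\rGdim_RM=1$.

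For the inductive step $n>0$, I would mimic the second half of the proof of Theorem \ref{changeofrings1}: choose a surjection $\phi\colon G\to M$ with $G$ finitely generated free over $R$ (a free cover suffices), so $xG$ need not be zero; tensoring with $\ov R$ gives $\ov\phi\colon G/xG\to M$ with kernel $K$, an $\ov R$-module fitting in $0\to K\to G/xG\to M\to 0$. Since $G/xG$ is free over $\ov R$, Proposition \ref{Gdim-ses} (applied over $\ov R$) gives $\rGdim_{\ov R}K=n-1$. On the other hand $xK\ne 0$ in general, so I cannot apply induction to $K$ directly over $\ov R$ in the $xK=0$ sense; instead I should track the $R$-syzygy. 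Let $L=\ker(\phi\colon G\to M)$, so $0\to L\to G\to M\to 0$ over $R$ with $L$ finitely presented (coherence), and there is a natural surjection $L\to K$ (indeed $K\cong L/xG$-type quotient). Here the induction should be arranged on the pair: $xL\ne 0$ but $L$ has an $\ov R$-quotient of smaller $\rGdim_{\ov R}$. The cleanest route is probably to induct instead asserting directly the stronger statement "$\rGdim_RM<\infty$" and deduce it from $\rGdim_RL<\infty$ via Proposition \ref{Horseshoe}; to get $\rGdim_RL<\infty$ I would relate $L$ to the module $K$ of smaller invariant over $\ov R$ together with the free-over-$\ov R$ piece $G/xG$, chasing the short exact sequences $0\to xG\to L\to K\to 0$ (note $xG\cong G$ as $R$-modules since $x$ is a non-zero-divisor, so $xG\in\rG(R)$) and applying Proposition \ref{Horseshoe} once the induction gives $\rGdim_RK<\infty$ from $\rGdim_{\ov R}K=n-1<\infty$ (here $K$ is killed by a power of $x$? no—$xK\ne 0$, so one must instead run the induction on $n$ with the hypothesis phrased over $\ov R$ and conclude finiteness over $R$ for every finitely presented $\ov R$-module of finite $\rGdim_{\ov R}$, using that such a module, regarded over $R$, is killed by $x$). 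Indeed $K$ is an $\ov R$-module, hence $xK=0$, so the induction hypothesis applies to $K$ after all, yielding $\rGdim_RK=n<\infty$; then $0\to G\to L\to K\to 0$ with $G\in\rG(R)$ gives $\rGdim_RL\le n$ by Proposition \ref{Horseshoe}, and $0\to L\to G\to M\to 0$ gives $\rGdim_RM\le n+1<\infty$. Finally Theorem \ref{changeofrings1} yields $\rGdim_{R/(x)}M=\rGdim_RM-1$.

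The main obstacle I anticipate is the base case bookkeeping: showing that $\rGdim_{\ov R}M=0$ forces $\rGdim_RM<\infty$ (equivalently $\le 1$) genuinely requires the coherence hypothesis to guarantee that the kernel and cokernel of $M\to M^{**}$—and the relevant syzygy $K$—are finitely generated, so that one can combine the Ext-vanishing $\Ext^i_R(M,R)=0$ for $i\ge 2$ with a Nakayama argument at each maximal ideal (using $x\in J(R)$) exactly as in Proposition \ref{Gmodx2}(2); without coherence this step can fail, which is precisely why Proposition \ref{changeofrings2} is stated separately from Theorem \ref{changeofrings1}. The inductive step, by contrast, is essentially a repackaging of the argument already given for Theorem \ref{changeofrings1} once one notices that every $\ov R$-module is automatically annihilated by $x$, so the induction hypothesis feeds back on the syzygy $K$ without extra hypotheses.
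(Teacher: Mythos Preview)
Your overall strategy---establish $\rGdim_RM<\infty$ and then invoke Theorem \ref{changeofrings1} for the exact equality---is correct and matches the paper's intent. Your inductive step is valid: the sequence $0\to xG\to L\to K\to 0$ with $xG\cong G\in\rG(R)$ and $K$ an $\ov R$-module of $\rGdim_{\ov R}K=n-1$ does let the induction hypothesis fire on $K$ (note $K=L/xG$ is finitely presented over $R$ since $R$ is coherent), and Proposition \ref{Horseshoe} then yields $\rGdim_RM\le n+1$.

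The base case, however, is only gestured at and contains a small confusion: the map $M\to M^{**}$ is irrelevant here since $M^*=0$ (as $x$ kills $M$ and is regular on $R$); what must be shown is that the first $R$-syzygy $K$ lies in $\rG(R)$. The way to do this is precisely the four-term sequence you mention, $0\to M\to K/xK\to F/xF\to M\to 0$: since $M$ and $F/xF$ lie in $\rG(\ov R)$, Proposition \ref{Gclass-ses} forces $K/xK\in\rG(\ov R)$, and then Proposition \ref{Gdim-modx}(2) (which rests on Proposition \ref{Gmodx2}(2), where coherence and $x\in J(R)$ enter) gives $K\in\rG(R)$.

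The paper's proof (following Christensen, Lemma 1.5.2) is more economical: it runs exactly this argument for \emph{all} $n$ at once, with no induction. From the same four-term sequence one gets $\rGdim_{\ov R}K/xK\le n$ directly via Proposition \ref{Horseshoe}, and then Proposition \ref{Gdim-modx}(2) yields $\rGdim_RK\le n$, hence $\rGdim_RM\le n+1$. In other words, the tool you need to complete your base case already handles every $n$, so your inductive step---though correct---becomes redundant once that machinery is invoked.
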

\begin{proof}  The proof is virtually identical to the proof of Lemma 1.5.2 of \cite{Christensen2000a}, where we use part (2) of Proposition \ref{Gdim-modx} in place of \cite[Proposition 1.4.5]{Christensen2000a}.
\end{proof}

Combining Theorem \ref{changeofrings1},  Proposition \ref{changeofrings2}, and Proposition \ref{Gdim-modx}  we obtain:

\begin{cor} \label{changeofrings3} Let $R$ be coherent, $M$ a nonzero finitely presented $R$-module, and $x\in J(R)$  a non-zero-divisor on $R$.
\begin{enumerate}
\item If $xM=0$ then $\rGdim_{R/(x)}M=\rGdim_RM-1.$
\item If $x$ is a non-zero-divisor on $M$ then $\rGdim_{R}M/xM=\rGdim_{R}M+1$.
\end{enumerate}
\end{cor}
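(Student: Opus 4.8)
The plan is to deduce both statements from the three change-of-rings results already established, splitting each into a finite and an infinite case. For part (1), suppose $xM = 0$ with $x \in J(R)$ a non-zero-divisor on $R$. If $\rGdim_R M < \infty$, then Theorem \ref{changeofrings1} applies directly and gives $\rGdim_{R/(x)} M = \rGdim_R M - 1$. If instead $\rGdim_R M = \infty$, I would argue that $\rGdim_{R/(x)} M = \infty$ as well; otherwise Proposition \ref{changeofrings2} (whose hypotheses are exactly $R$ coherent, $M$ finitely presented, $x \in J(R)$ a non-zero-divisor on $R$ with $xM = 0$, and $\rGdim_{R/(x)} M < \infty$) would force $\rGdim_R M = \rGdim_{R/(x)} M + 1 < \infty$, a contradiction. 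Hence in all cases the displayed equality holds, with the convention that $\infty - 1 = \infty$ and $\infty + 1 = \infty$.

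For part (2), suppose $x$ is a non-zero-divisor on $M$ (as well as on $R$), and $x \in J(R)$. Here I would invoke Proposition \ref{Gdim-modx}(2): since $R$ is coherent, $M$ is $\fpi$, and $x \in J(R)$ is a non-zero-divisor on $M$ and $R$, that result gives $\rGdim_{R/(x)} M/xM = \rGdim_R M$ outright. Now apply part (1) of the present corollary to the $R$-module $M/xM$, which is finitely presented over $R$ (as $R$ is coherent) and is annihilated by $x$: we get $\rGdim_{R/(x)} M/xM = \rGdim_R(M/xM) - 1$. Combining the two equalities yields $\rGdim_R M = \rGdim_R(M/xM) - 1$, i.e. $\rGdim_R M/xM = \rGdim_R M + 1$, as claimed. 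One should note here that $M/xM$ is nonzero by Nakayama's lemma since $x \in J(R)$ and $M \neq 0$, so part (1) applies to it.

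The only genuine subtlety is the bookkeeping in the infinite case of part (1): one must be sure that the three cited results together pin down the value of $\rGdim_{R/(x)} M$ whether or not it is finite, rather than leaving a gap when both sides are infinite. This is handled by the contrapositive use of Proposition \ref{changeofrings2} sketched above. Everything else is a direct concatenation of previously proved statements, so I expect no computational obstacle; the main care is simply in checking that the hypotheses (coherence, finite presentation, $x \in J(R)$, the appropriate non-zero-divisor conditions) are met at each invocation, in particular for the module $M/xM$ when feeding it back into part (1).
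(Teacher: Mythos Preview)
Your argument is correct and is precisely the combination of Theorem~\ref{changeofrings1}, Proposition~\ref{changeofrings2}, and Proposition~\ref{Gdim-modx} that the paper indicates; the paper gives no further detail beyond citing those three results, so you have simply made the intended proof explicit. The case split on finiteness in part~(1) and the feeding of $M/xM$ back into part~(1) for part~(2) are exactly what is needed.
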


\section{The Auslander-Bridger formula for restricted Gorenstein dimension}

In this section we prove the Auslander-Bridger formula for  modules of finite restricted G-dimension over a quasi-local ring.  The proof here differs by necessity from those given in  \cite{Auslander1969a} and \cite{Christensen2000a}  to avoid arguments using the theory of associated primes.  One also needs to pass to faithfully flat extensions to make the induction argument work.

We begin with the following elementary result:

\begin{lemma} \label{dual=0} Let $R$ be a quasi-local ring with $\depth R=0$ and $M$ a finitely presented $R$-module.  Then $M=0$ if and only if $M^*=0$.
\end{lemma}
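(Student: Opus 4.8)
The plan is to prove both implications using the characterization of depth via Ext (Proposition \ref{depth-coherent} and Lemma \ref{depth-ext}) together with Nakayama's lemma, noting that over a quasi-local ring of depth zero there is a nonzero element of $R$ annihilated by the maximal ideal — more precisely, $\depth R = 0$ means $\Ext^0_R(R/I,R) = \Hom_R(R/I,R) \neq 0$ for every finitely generated ideal $I \subseteq \m$ with $R/I$ finitely presented, equivalently $(0 :_R I) \neq 0$ for all such $I$.

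First I would dispose of the easy direction: if $M = 0$ then trivially $M^* = \Hom_R(M,R) = 0$. For the converse, suppose $M^* = 0$ but $M \neq 0$; I will derive a contradiction. Since $M$ is finitely presented and $M \neq 0$, by Nakayama's lemma $M/\m M \neq 0$, so there is a surjection $\pi: M \to R/\m'$ for some maximal... — actually more carefully, since $R$ is quasi-local with maximal ideal $\m$, $M/\m M$ is a nonzero vector space over $R/\m$, giving a surjection $M \twoheadrightarrow R/\m$. Pulling back, we get a submodule structure; the point I want is that $M$ maps onto $R/\m$. Now I would use that $\depth R = 0$ to produce a nonzero homomorphism $M \to R$, contradicting $M^* = 0$.

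The key step — and the main obstacle — is manufacturing a nonzero map $M \to R$ from the data $M \neq 0$ and $\depth R = 0$. Here is how I would carry it out: since $M$ is finitely presented and nonzero, choose a minimal generator, giving a surjection $M \twoheadrightarrow R/\m$ (via $M \twoheadrightarrow M/\m M \twoheadrightarrow R/\m$). Let $I = \operatorname{Ann}_R(\text{this quotient})$... no — instead, let $N = \ker(M \to R/\m)$, so $M/N \cong R/\m$ and in particular $R/\m$ is finitely presented (as $R$ is... wait, the lemma does not assume coherence). Let me instead argue: $R/\m$ itself need not be finitely presented, but I only need \emph{some} finitely generated ideal $I \subseteq \m$ with $R/I$ finitely presented and $\operatorname{Hom}(R/I, M) \neq 0$, then lift to $M$. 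Better: because $M$ is finitely presented and $\depth R = 0$, I would apply Proposition \ref{depth-coherent}... but that requires $R$ coherent. The cleanest route avoiding coherence: take a nonzero cyclic submodule $Rm \subseteq M$; since $M$ is finitely presented but $Rm \cong R/\operatorname{Ann}(m)$ need not be, push forward differently. Ultimately I expect the argument to run: $M$ finitely presented and nonzero $\Rightarrow$ there is a prime (indeed maximal) ideal $\m$ with $M_\m \neq 0$; localize, but that changes depth.

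I think the intended argument is shorter and uses Lemma \ref{depth-ext} directly: $\depth R = 0$ is equivalent (taking $I = \m$ when $R/\m$ is $(FP)_1$, e.g. in the coherent case, or via Proposition \ref{depth-basic}(1) in general) to the existence of $0 \neq a \in R$ with $\m a = 0$, i.e. $a$ spans a copy of $R/\m$ inside $R$. Then, given $M$ finitely presented and nonzero, the surjection $M \twoheadrightarrow M/\m M \neq 0$ splits off a copy of $R/\m$ as a \emph{quotient} of $M$; composing $M \twoheadrightarrow R/\m \xrightarrow{\;1 \mapsto a\;} R$ gives a nonzero element of $M^* = \Hom_R(M,R)$, the required contradiction. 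Thus the whole proof is: ($\Leftarrow$) is the content just described; ($\Rightarrow$) is trivial. The one genuine subtlety to nail down is why $\depth R = 0$ yields such an $a$ with $\operatorname{Ann}(a) = \m$ (not merely $\operatorname{Ann}(a) \supseteq$ some finitely generated ideal), which follows because $\operatorname{Hom}_R(R/I, R) \neq 0$ for \emph{every} finitely generated $I \subseteq \m$ forces, by a direct limit / maximality argument over finitely generated ideals together with $\m = \bigcup I$, a nonzero element killed by all of $\m$.
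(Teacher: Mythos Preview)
Your final argument has a genuine gap: the claim that $\depth R=0$ forces the existence of a nonzero $a\in R$ with $\m a=0$ is false for non-Noetherian quasi-local rings. The condition $\depth R=0$ says (via Proposition \ref{depth-basic}(1) and Lemma \ref{depth-ext}) only that $(0:_RI)\neq 0$ for every \emph{finitely generated} ideal $I\subseteq\m$; passing to the limit over all such $I$ is an inverse limit, and an inverse limit of nonzero modules can vanish. A concrete counterexample is the very ring appearing later in the paper: $R=k[x_1,x_2,\ldots]/(x_1^2,x_2^2,\ldots)$, which is quasi-local with $\depth R=0$ (for any finitely generated $J\subseteq\m$ the product of the variables involved lies in $(0:_RJ)$), yet $(0:_R\m)=0$ because any nonzero element is a finite sum of squarefree monomials and is therefore not killed by $x_N$ for $N$ large. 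So your composition $M\twoheadrightarrow R/\m\xrightarrow{1\mapsto a} R$ cannot get off the ground in general, and the ``direct limit / maximality'' hand-wave does not rescue it.

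The paper's proof sidesteps this by never mapping to $R/\m$. Instead it inducts on the number of generators of $M$: a cyclic finitely presented module is $R/I$ with $I$ \emph{finitely generated}, and for such $I$ the hypothesis $\depth R=0$ genuinely gives $\Hom_R(R/I,R)\neq 0$ via Lemma \ref{depth-ext}. For $n>1$ generators, one passes to the cyclic quotient $M/M'$ (still finitely presented, since $M'$ is finitely generated), shows it vanishes, and reduces the generator count. The point you missed is that finite presentation of $M$ is exactly what lets you work entirely with finitely generated ideals, where the depth hypothesis actually bites.
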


\begin{proof} Let $M$ be generated by $n$ elements and suppose $M^*=0$.  We show by induction on $n$ that $M=0$.  If $n=1$ then $M\cong R/I$ for some finitely generated ideal $I$. Since $\depth R=0$ we have $\Hom_R(R/I,R)\neq 0$ unless $I=R$ (Lemma \ref{depth-ext}).  As $M^*=\Hom_R(R/I,R)=0$ we must have $I=R$ and hence $M=0$.  If $n>1$ then there exists a submodule $M'$ of $M$ generated by $n-1$ elements and  such that $M/M'$ is cyclic.  Clearly, $M/M'$ is finitely presented and $(M/M')^*=0$.  By the $n=1$ case, we have $M/M'=0$.  Hence, $M$ is generated by $n-1$ elements and $M=0$.
\end{proof}

\begin{lemma}  \label{depth-zero} Let $R$ be a quasi-local ring with $\depth R=0$ and $M$ a nonzero $R$-module of finite $\rG$-dimension.  Then $\rGdim_RM=0$ and $\depth M=0$.
\end{lemma}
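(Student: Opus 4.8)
The plan is to prove the two assertions separately, with Lemma~\ref{dual=0} supplying the crucial leverage in each case.

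\emph{Step 1: $\rGdim_R M=0$.} By Corollary~\ref{G=rGa} it suffices to show that $\Ext^i_R(M,R)=0$ for all $i>0$. Suppose not, and put $n:=\rGdim_R M\ge 1$. Replacing $M$, if necessary, by the $(n-1)$st syzygy of a free resolution of $M$ --- which is $\fpi$ and, by Proposition~\ref{GDProp}, has $\rG$-dimension exactly $1$ --- we may assume $n=1$. Then there is an exact sequence $0\to G_1\to G_0\to M\to 0$ with $G_0,G_1\in\rG(R)$, and $E:=\Ext^1_R(M,R)$ is nonzero by Corollary~\ref{G=rGa} and finitely presented by Lemma~\ref{ext-fg}. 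Applying $\Hom_R(-,R)$ and using $\Ext^1_R(G_0,R)=0$ gives an exact sequence $G_0^*\to G_1^*\to E\to 0$; dualizing once more yields a left-exact sequence $0\to E^*\to G_1^{**}\to G_0^{**}$, and since $G_0$ and $G_1$ are reflexive this last map is, up to the canonical identifications, the original injection $G_1\hookrightarrow G_0$. Hence $E^*=0$, so Lemma~\ref{dual=0} (using $\depth R=0$ and that $E$ is finitely presented) forces $E=0$, a contradiction. Therefore $M\in\rG(R)$.

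\emph{Step 2: $\depth M=0$.} Suppose instead that $\depth M\ge 1$. By Proposition~\ref{depth-basic}(1) there is a finitely generated ideal $I\subseteq\m$ with $\depth_I M\ge 1$, and since $R/I$ is $\fp{1}$, Lemma~\ref{depth-ext} gives $\Hom_R(R/I,M)=0$. Now I would exploit reflexivity: because $M\in\rG(R)$ we have $M\cong M^{**}$ and $M^*$ is $\fpi$, hence finitely presented, so by Hom--tensor adjunction
\[
\Hom_R(R/I,M)\cong\Hom_R\bigl(R/I,\Hom_R(M^*,R)\bigr)\cong\Hom_R\bigl(R/I\otimes_R M^*,\,R\bigr)\cong\Hom_R(M^*/IM^*,R).
\]
Writing $I=(a_1,\dots,a_k)$, the module $M^*/IM^*$ is the cokernel of the map $(M^*)^{\oplus k}\to M^*$ with coordinates $a_1,\dots,a_k$, whose image $IM^*$ is finitely generated; since $M^*$ is finitely presented, Lemma~\ref{fpi} shows $M^*/IM^*$ is finitely presented. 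Moreover $M^*/IM^*\ne 0$ by Nakayama's lemma, as $M^*\ne 0$ and $I\subseteq\m$. Since $\depth R=0$, Lemma~\ref{dual=0} gives $(M^*/IM^*)^*\ne 0$, and hence $\Hom_R(R/I,M)\ne 0$, contradicting the previous paragraph. Therefore $\depth M=0$.

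\emph{Main obstacle.} The substantive step is Step~1: the idea of isolating the top nonvanishing module $E=\Ext^{\rGdim_R M}_R(M,R)$, noting it is finitely presented (Lemma~\ref{ext-fg}), and showing $E^*=0$ using reflexivity of the syzygies, so that the ``dual detects nonvanishing'' principle of Lemma~\ref{dual=0} produces a contradiction. Step~2 is then a short adjunction computation. The routine care required throughout is the finite-presentation bookkeeping --- verifying that $E$ and $M^*/IM^*$ really are finitely presented, since Lemma~\ref{dual=0} needs this hypothesis --- together with checking that the canonical biduality identifications are applied correctly when recognizing $E^*$ as the kernel of the original injection.
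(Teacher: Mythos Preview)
Your proof is correct. Step~1 matches the paper's argument essentially verbatim: reduce to $\rGdim_RM\le 1$, dualize twice, and use reflexivity of $G_0,G_1$ together with Lemma~\ref{dual=0} to kill $\Ext^1_R(M,R)$.

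Step~2, however, takes a genuinely different route from the paper. The paper argues by passing to the faithfully flat extension $S=R[x]_{\m R[x]}$ so as to obtain an honest non-zero-divisor $x\in\m$ on $M$ (via Proposition~\ref{depth-basic}(4)); it then dualizes the sequence $0\to M\xrightarrow{x}M\to M/xM\to 0$ twice, shows $\Ext^1_R(M/xM,R)^*=0$ using reflexivity of $M$, applies Lemma~\ref{dual=0} to conclude $\Ext^1_R(M/xM,R)=0$, and reads off $M^*=xM^*$, contradicting Nakayama. Your argument stays inside $R$: you invoke Proposition~\ref{depth-basic}(1) and Lemma~\ref{depth-ext} to get $\Hom_R(R/I,M)=0$ for a suitable finitely generated $I\subseteq\m$, and then use the adjunction $\Hom_R(R/I,M)\cong\Hom_R(R/I,\Hom_R(M^*,R))\cong(M^*/IM^*)^*$ (available precisely because $M\in\rG(R)$ is reflexive) to feed $M^*/IM^*$ directly into Lemma~\ref{dual=0}. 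This is cleaner: it avoids the faithfully flat base change entirely and exploits the reflexivity of $M$ more directly. The paper's approach, by contrast, is closer in spirit to the classical Noetherian argument with actual regular elements, and the base-change technique it introduces here is reused later (e.g.\ in Lemma~\ref{Gclass-depth}). Your finite-presentation bookkeeping for $M^*/IM^*$ via Lemma~\ref{fpi}(1) is correct.
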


\begin{proof} By induction it suffices to prove the case when $\rGdim_RM\le 1$.   Then there exists a short exact sequence $0\to G_1\to G_0\to M\to 0$ where $G_0$ and $G_1$ are in $\rG(R)$.  Applying $\Hom_R(-,R)$ twice, we get the exact sequence
$$0\to \Ext^1_R(M,R)^*\to G_1^{**}\to G_0^{**}.$$
Since $G_0$ and $G_1$ are reflexive and the map $G_1\to G_0$ is injective, we obtain $\Ext^1_R(M,R)^*=0$.
As $\Ext^1_R(M,R)$ is finitely presented by Lemma \ref{ext-fg}, we have $\Ext^1_R(M,R)=0$ by
Lemma \ref{dual=0}.  Hence, $\rGdim_RM=0$ by Proposition \ref{GDProp}.

By way of contradiction, assume $\depth M>0$.  Let $m$ be the maximal ideal of $R$ and $S=R[x]_{mR[x]}$ where $x$ is an indeterminate over $R$.   As $S$ is faithfully flat over $R$, $M\otimes_RS$ is in
$\rG(S)$, $\depth_SS=\depth_R R=0$, and $\depth_{S} M\otimes_RS=\depth M$ (cf. Propositions \ref{Gclass-flat} and \ref{depth-basic}, part (3)).  Furthermore,  $\Depth_{S}M\otimes_RS>0$ by part (4) of Proposition \ref{depth-basic}.  Thus by passing to $S$ and resetting notation, we may assume there exists an element $x\in m$ such that $x$ is a non-zero-divisor on $M$.   Applying $\Hom_R(-,R)$ to the short exact sequence $0\to M\xrightarrow{x} M\to M/xM\to 0$ we get the exact sequence
$$0\to (M/xM)^*\to M^*\xrightarrow{x} M^* \to \Ext^1_R(M/xM,R)\to 0.$$  In particular, this sequence shows that $\Ext^1_R(M/xM, R)$ is finitely presented.    Dualizing again, we have
$$0\to \Ext^1_R(M/xM,R)^*\to M^{**}\xrightarrow{x} M^{**}$$
is exact.  Since $M\cong M^{**}$ and $x$ is a non-zero-divisor on $M$ we have $\Ext^1_R(M/xM,R)^*=0$.   By Lemma \ref{dual=0}, we see that $\Ext^1_R(M/xM,R)=0$.  From the exact sequence above, this implies $M^*=xM^*$.  By Nakayama's Lemma we have $M^*=0$, a contradiction.  Thus, $\depth M=0$.
\end{proof}

\begin{lemma} \label{Gclass-depth} Let $R$ be a quasi-local ring and $M$ a nonzero $R$-module in $\rG(R)$.  Then
$\depth R=\depth M$.
\end{lemma}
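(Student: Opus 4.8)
The goal is to show $\depth R = \depth M$ for a nonzero $M \in \rG(R)$ over a quasi-local ring $R$. I would proceed by induction on $\depth R$.

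\textbf{Base case.} If $\depth R = 0$, then by Lemma \ref{depth-zero} (applied to $M$, which has finite $\rG$-dimension since it lies in $\rG(R)$) we get $\depth M = 0 = \depth R$. So the base case is immediate.

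\textbf{Inductive step.} Suppose $\depth R = d > 0$. The first task is to produce a non-zero-divisor that is regular on both $R$ and $M$ simultaneously, and to do so without leaving the quasi-local setting. As in the proof of Lemma \ref{depth-zero}, I would pass to the faithfully flat extension $S = R[x]_{mR[x]}$: by Proposition \ref{Gclass-flat} the module $M \otimes_R S$ stays in $\rG(S)$, by Proposition \ref{depth-basic}(3) depths are preserved for both $R$ and $M$, and crucially $\depth_S S = d > 0$ lets me invoke Proposition \ref{depth-basic}(5) (or iterate part (4)) to guarantee an honest non-zero-divisor in the maximal ideal of $S$. Since $M$ is torsion-free (being reflexive — noted right after the definition of $\rG(R)$), that element is also a non-zero-divisor on $M \otimes_R S$. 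Resetting notation, I may assume there is $x \in m$ a non-zero-divisor on both $R$ and $M$. Now by Lemma \ref{Gmodx1}, $M/xM \in \rG(R/(x))$, and $M/xM \neq 0$ by Nakayama. The ring $R/(x)$ is quasi-local with $\depth R/(x) = d - 1$ by Proposition \ref{depth-basic}(7). By the induction hypothesis, $\depth_{R/(x)} M/xM = \depth R/(x) = d - 1$. Finally, applying Proposition \ref{depth-basic}(7) once more (now to $M$ and the $M$-regular element $x$), $\depth_R M = \depth_{R/(x)} M/xM + 1 = d$, as desired.

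\textbf{Main obstacle.} The delicate point is the same one flagged throughout the paper: one cannot simply assert the existence of a non-zero-divisor in the maximal ideal of a quasi-local ring of positive depth, nor can one work with associated primes. The faithfully-flat-extension maneuver is what makes the induction go, and the care needed is in checking that all the relevant data — membership in the restricted G-class, the depth of the ring, the depth of the module, and the non-zero-divisor property on the module — transfer correctly to $S$ and back. Everything else is a bookkeeping application of Proposition \ref{depth-basic}(7) and Lemma \ref{Gmodx1}. (One should also note that $\rGdim_R M = 0$ here, so all the finiteness hypotheses needed to invoke the cited results are automatic.)
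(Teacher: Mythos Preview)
Your argument for the case of finite $\depth R$ is correct and essentially identical to the paper's: same faithfully flat extension $S=R[x]_{mR[x]}$, same use of torsion-freeness to get the non-zero-divisor on $M$, same reduction via Lemma \ref{Gmodx1} and Proposition \ref{depth-basic}(7).

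There is, however, a genuine gap: you have not addressed the possibility $\depth R=\infty$, which does occur for non-Noetherian quasi-local rings (e.g.\ a polynomial ring in infinitely many variables over a field, localized at the irrelevant maximal ideal). Ordinary induction on the integer $d=\depth R$ simply does not reach this case. The paper treats it separately: one shows $\depth M\ge n$ for every $n$ by the same reduction---pass to $R[x]_{mR[x]}$, choose a non-zero-divisor $x$, note $\depth_{R/(x)}R/(x)=\infty$ still, and apply the hypothesis ``$\depth N\ge n$ for all $N\in\rG(S)$ over any quasi-local $S$ of infinite depth'' to $M/xM$ to conclude $\depth M\ge n+1$. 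The mechanics are the same as in your finite case; the point is only that the induction must be organized differently (on $n$ rather than on $\depth R$) when the target is $\infty$.
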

\begin{proof}  Let $m$ denote the maximal ideal of $R$.  We will induct on \(\depth R = n\).  If $n=0$ the result holds by Lemma \ref{depth-zero}.  If \(\depth R = n > 0\), as in the proof of Lemma \ref{depth-zero}  we may assume, by  passing to $R[x]_{mR[x]}$ if necessary, that there exists $x\in m$ which is a non-zero-divisor on $R$.  Then $x$ is also a non-zero-divisor on $M$ (as reflexive modules are torsion-free) and $M/xM$ is in $\rG(R/(x))$ by Lemma \ref{Gmodx1}.   Hence $\depth_{R/(x)}R/(x)=n-1$ and $\depth_{R/(x)}M/xM=\depth M-1$. Suppose first that $0 <\depth R=n<\infty$.  By induction on $n$, we obtain $\depth_{R/(x)}M/xM=n-1$ and hence $\depth M=n$.

Suppose now that $\depth R=\infty$.  We prove that $\depth M\ge n$ for all $n$. The case $n=0$ is trivial.  Assume that for all quasi-local rings $S$ with $\depth S=\infty$ and modules $N$ in $\rG(S)$, that $\depth_S N\ge n$.  As above, we may assume there exists $x\in m$ such that $x$ is a non-zero-divisor on $R$ (and hence $M$).  Then $M/xM$ is in $\rG(R/(x))$, $\depth_{R/(x)}R/(x)=\infty$, and $\depth_{R/(x)}M/xM=\depth M-1$.  By assumption, $\depth_{R/(x)}M/xM\ge n$ which implies $\depth M\ge n+1$.  \end{proof}

We now prove the Auslander-Bridger formula for restricted G-dimension:

\begin{thm} \label{AB} Let $R$ be a quasi-local ring and $M$ an nonzero $R$-module of finite $\rG$-dimension.
Then
$$\depth M+\rGdim_RM=\depth R.$$
\end{thm}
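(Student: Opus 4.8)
The plan is to induct on $\rGdim_R M = d$, using the change-of-rings results and the auxiliary lemmas just established. The base case $d = 0$ is exactly Lemma \ref{Gclass-depth}: if $\rGdim_R M = 0$ then $M$ is in $\rG(R)$, so $\depth M = \depth R$, giving $\depth M + 0 = \depth R$. For the inductive step, suppose $d = \rGdim_R M \ge 1$ and the formula holds for all quasi-local rings and all nonzero modules of restricted G-dimension strictly less than $d$.

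First I would dispose of the case $\depth R = 0$: by Lemma \ref{depth-zero}, any nonzero module of finite $\rG$-dimension over such a ring has $\rGdim_R M = 0$ and $\depth M = 0$, so $d \ge 1$ cannot occur. Hence in the inductive step we may assume $\depth R \ge 1$ (possibly $\infty$). As in the proofs of Lemmas \ref{depth-zero} and \ref{Gclass-depth}, I would pass to the faithfully flat extension $S = R[x]_{mR[x]}$ if necessary: by Corollary \ref{Gdim-local}(2) restricted G-dimension is preserved, by Proposition \ref{depth-basic}(3) depth is preserved for both $R$ and $M$, and by Proposition \ref{depth-basic}(4) the extension produces a non-zero-divisor on $R$ inside the maximal ideal. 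So after resetting notation I may assume there is $x \in m$ that is a non-zero-divisor on $R$. Now I would pick a surjection $G \to M$ with $G \in \rG(R)$ and let $K$ be the kernel; by Proposition \ref{Gdim-ses}, $\rGdim_R K = d - 1$. The element $x$ is a non-zero-divisor on $R$, on $G$ (reflexive modules are torsion-free), and on $K$ (a submodule of $G$). I want to relate $\depth M$ to $\depth K$ and then apply induction to $K$.

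The key computation is with depths along the short exact sequence $0 \to K \to G \to M \to 0$. Since $G \in \rG(R)$, Lemma \ref{Gclass-depth} gives $\depth G = \depth R =: n \ge 1$. I would then argue that $\depth M = n - 1$ is impossible to rule out directly, so instead I consider the relationship more carefully: applying Proposition \ref{depth-basic}(8) to this sequence — noting $\depth G > \depth M$ would force $\depth K = \depth M + 1$ — I need to know $\depth G > \depth M$, or else handle the case $\depth M \ge \depth G$. If $\depth M \ge n$, then since $\depth G = n$ and $G \to M$ is surjective with kernel $K$, standard depth inequalities (the depth lemma) applied to $0 \to K \to G \to M \to 0$ would give $\depth K \ge n$ as well; but then by induction $\depth K + (d-1) = \depth R = n$, forcing $d - 1 \le 0$, contradicting $d \ge 1$ unless $\depth K = n$ and $d = 1$, which is the $d=1$ subcase to be checked separately. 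Otherwise $\depth M < n = \depth G$, so Proposition \ref{depth-basic}(8) yields $\depth K = \depth M + 1$. Then by the induction hypothesis applied to $K$ (which is nonzero — one must check this, e.g. if $K = 0$ then $M \cong G$ is reflexive and $d = 0$), we get $\depth K + \rGdim_R K = \depth R$, i.e. $(\depth M + 1) + (d - 1) = \depth R$, which is exactly $\depth M + d = \depth R$.

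The main obstacle will be the bookkeeping around the case $\depth M \ge \depth G$ and the possibility $K = 0$ or $M = 0$ after reductions, together with correctly invoking the depth lemma (Proposition \ref{depth-basic}(8)) in the right direction — that proposition only gives information when $\depth M > \depth N$, so one must separately verify that the degenerate configurations ($\depth M = \depth G$, or $\depth M = \infty$) are consistent with the formula. I expect that the cleanest route handles $\depth R = \infty$ in parallel, showing by the same induction that $\depth M + d = \infty$, i.e. $\depth M = \infty$, which follows since $\depth K = \infty$ by induction and the depth lemma propagates this to $M$ through $0 \to K \to G \to M \to 0$ with $\depth G = \infty$. Assembling these cases gives the formula in full generality.
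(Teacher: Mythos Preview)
Your induction on $d = \rGdim_R M$ handles $\depth R = \infty$ and the case $\depth M < \depth R$ correctly, but the residual case you isolate --- $d = 1$ with finite $\depth R = n$ and $\depth M \ge n$ --- is a genuine gap, not bookkeeping. You reduce to the configuration $0 \to K \to G \to M \to 0$ with $K, G \in \rG(R)$, $\depth K = \depth G = n$, $\depth M \ge n$, and $\rGdim_R M = 1$, and then stop. Nothing in an induction on $d$ excludes this: $d$ has not decreased, and the desired conclusion $\depth M = n - 1$ contradicts the standing assumption $\depth M \ge n$, so you must show this configuration is impossible by some argument independent of your induction hypothesis. (Note also that you introduce a non-zero-divisor $x \in m$ on $R$ but never actually use it; you only observe that $x$ is regular on $G$ and $K$.)

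The paper closes this gap by organizing the finite-depth case as an induction on $\depth R$ rather than on $d$. When $\depth R > 0$ and $\depth M > 0$, one applies Proposition~\ref{depth-basic}(4) to $M \oplus R$, so that after the faithfully flat extension the resulting non-zero-divisor is regular on $R$ \emph{and} on $M$; then Proposition~\ref{Gdim-modx} passes to $R/(x)$ and $M/xM$, both depths drop by one while $\rGdim$ is preserved, and the induction on $\depth R$ applies. When $\depth M = 0 < \depth R$, the paper uses exactly your short exact sequence: now $\depth G > \depth M$ is automatic, Proposition~\ref{depth-basic}(8) gives $\depth K = 1$, and one feeds $K$ into the already-established positive-depth case. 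In short, your two tools --- the non-zero-divisor and the syzygy sequence --- are both correct, but each is suited to only one of the two subcases, and it is the induction on $\depth R$ (not on $d$) that makes them fit together.
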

\begin{proof}
First assume $\depth R=\infty$.   If $\rGdim_RM=0$ then $\depth M=\infty$ by Lemma \ref{Gclass-depth}.
Suppose $\rGdim_RM=n>0$ and $\depth N=\infty$ for all $R$-modules $N$ such that $\rGdim_RN<n$.
Let $0\to K\to G\to M\to 0$ be exact where $G$ is in $\rG(R)$.  Then $\rGdim_RK=n-1$ and hence $\depth K=\infty$.    Since $\depth G=\infty$, $\depth M=\infty$ by part (8) of Proposition \ref{depth-basic} and the equality holds.

Assume now that $\depth R<\infty$.  We proceed by induction on $\depth R$.  If $\depth R=0$ the formula holds by Lemma \ref{depth-zero}.  Suppose $\depth R=n>0$.  Let $m$ denote the maximal ideal of $R$.  If $\depth M>0$ we may assume,  by passing to $R[x]_{mR[x]}$ if necessary,
that there exists $x\in m$ such that $x$ is a non-zero-divisor on $R$ and $M$.  (In this case, one can show  $\depth M\oplus R>0$ and we may apply part (4) of Proposition \ref{depth-basic}.)  Then $\rGdim_{R/(x)}M/xM=\rGdim_RM$ by Proposition \ref{Gdim-modx}.  Furthermore, $\depth_{R/(x)}M/xM=\depth M-1$ and $\depth_{R/(x)}R/(x)=\depth R-1$.  By the induction hypothesis , we have
$\depth_{R/(x)}M/xM + \rGdim_{R/(x)}M/xM=\depth_{R/(x)}R/(x)$.    Substituting, we see the formula holds.

Finally, assume $\depth R=n>0$ and $\depth M=0$.   Then $\rGdim_RM>0$ by Lemma \ref{Gclass-depth}.  Let $0\to K\to G\to M\to 0$ be a short exact sequence where $G$ is in $\rG(R)$.  Then $\rGdim_RK=\rGdim_RM-1$ (Proposition \ref{Gdim-ses}) and, since $\depth G=\depth R>\depth M=0$,  $\depth K=1$ (part (8) of Proposition \ref{depth-basic}).  By the $\depth M>0$ case applied to $K$, we have
$\depth K + \rGdim_RK=\depth R$.  Substituting, we again see the formula holds.

\end{proof}

As a corollary, we  get the desired result for coherent rings:

\begin{cor}\label{AB-coherent} Let $R$ be a quasi-local coherent ring and $M$ a finitely presented $R$-module of finite G-dimension.  Then
$$\depth M + \Gdim_RM = \depth R.$$
\end{cor}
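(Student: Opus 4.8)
The plan is to derive Corollary \ref{AB-coherent} directly from Theorem \ref{AB} by invoking the dictionary between G-dimension and restricted G-dimension that was set up in Section 3. The key observation is that over a coherent ring the two dimensions agree for finitely presented modules, so the corollary is essentially a translation of the theorem into the language most natural for the applications.

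First I would note that since $R$ is coherent and $M$ is finitely presented, $M$ is $\fpi^R$, so both $\Gdim_RM$ and $\rGdim_RM$ are defined. By hypothesis $\Gdim_RM<\infty$, and by Corollary \ref{G=rGb} we have $\Gdim_RM=\rGdim_RM$; in particular $\rGdim_RM<\infty$ as well. Thus $M$ is a nonzero finitely presented $R$-module of finite restricted G-dimension, and Theorem \ref{AB} applies to give
$$\depth M+\rGdim_RM=\depth R.$$
Substituting $\rGdim_RM=\Gdim_RM$ yields the claimed formula. That is the entire argument; there is essentially no obstacle, as the substantive content is already contained in Theorem \ref{AB} and Corollary \ref{G=rGb}.

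If one wanted to be slightly more careful, the only point worth spelling out is why $M$ being finitely presented over a coherent ring forces it to be $\fpi$: this is exactly the remark in Section 2 that over a coherent ring the $\fpi$ modules are precisely the finitely presented ones (cf. \cite[Corollary 2.5.2]{Glaz1989a}), together with the fact that coherent rings have the property that syzygies of finitely presented modules are again finitely presented. With that in hand the corollary is immediate, and I would present it as a one-line deduction.
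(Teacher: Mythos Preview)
Your proposal is correct and matches the paper's own proof essentially verbatim: the paper's argument is the single line ``As $R$ is coherent and $M$ is finitely presented, $\Gdim_RM=\rGdim_RM$ by Corollary~\ref{G=rGb},'' after which Theorem~\ref{AB} is applied implicitly. Your write-up simply spells out the obvious steps a bit more fully.
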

\begin{proof}  As $R$ is coherent and $M$ is finitely presented, $\Gdim_RM=\rGdim_RM$ by Corollary \ref{G=rGb}.
\end{proof}

We note that Theorem \ref{AB} also generalizes (and hence gives another proof of)  \cite[Theorem 2, Chapter 6]{Northcott1976a}:

\begin{cor} Let $R$ be a quasi-local ring and $M$ an $\fpi$ module of finite projective dimension.  Then
$$\depth M+ \pd_RM = \depth R.$$
\end{cor}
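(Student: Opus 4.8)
The plan is to obtain this corollary as an immediate consequence of Theorem \ref{AB} together with Proposition \ref{pdgd}. First I would observe that since $M$ is $\fpi$ and $\pd_R M<\infty$, Proposition \ref{pdgd} gives $\rGdim_R M=\pd_R M$; in particular $M$ has finite $\rG$-dimension, so the hypotheses needed to apply the Auslander--Bridger formula are in place. I would also note in passing that $M$ is necessarily nonzero here, since $\depth M$ is only defined when $mM\neq M$, so there is no degenerate case to separate out.

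Next I would invoke Theorem \ref{AB}, which yields
$$\depth M+\rGdim_R M=\depth R,$$
and then substitute $\rGdim_R M=\pd_R M$ from the first step to conclude $\depth M+\pd_R M=\depth R$, as desired.

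I do not expect any genuine obstacle: all of the substance of the argument already resides in Theorem \ref{AB} (and, underneath it, in Lemma \ref{Gclass-depth} and the change-of-rings results of Section 3) and in the identification $\rGdim_R M=\pd_R M$ for $\fpi$ modules of finite projective dimension. The only point requiring even a moment's attention is verifying that the finiteness and nonvanishing hypotheses of Theorem \ref{AB} are met, which is precisely what the first step arranges. Finally, I would remark that this statement recovers \cite[Theorem 2, Chapter 6]{Northcott1976a}, so the above also furnishes a new proof of that result.
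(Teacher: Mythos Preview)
Your proposal is correct and follows exactly the paper's approach: the paper's proof is the single line ``By Proposition \ref{pdgd}, $\pd_RM=\rGdim_RM$,'' with the application of Theorem \ref{AB} left implicit. Your added remark about $M$ being nonzero (so that $\depth M$ is defined and Theorem \ref{AB} applies) is a reasonable clarification that the paper leaves tacit.
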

\begin{proof}
By Proposition \ref{pdgd}, $\pd_RM=\rGdim_RM$.
\end{proof}

\section{Coherent Gorenstein rings}

In \cite{Bertin1971a}, J. Bertin defines a quasi-local ring to be {\it regular} if every finitely generated ideal has finite projective dimension.   A ring $R$ is said to be regular if $R_m$ is regular for every maximal ideal $m$ of $R$.   It is clear that this definition of regular reduces to the usual definition  for Noetherian local rings.   In this spirit, we propose the following definition:

\begin{defn}{\rm A quasi-local ring $R$ is called {\it Gorenstein} if every finitely generated ideal of $R$ has finite G-dimension.  In general, a ring $R$ is called {\it Gorenstein} if $R_m$ is Gorenstein for every maximal ideal $m$ of $R$.}
\end{defn}

We remark that by \cite[Theorem 4.20]{Auslander1969a}, this definition agrees with the usual notion of Gorenstein for Noetherian rings.    While most of the results and examples in this section pertain to coherent Gorenstein rings, we point out that there exist non-coherent Gorenstein rings.   We thank the referee for suggesting the following example.

A valuation ring \(R\) is \textit{almost maximal} if for every non-zero ideal \(I\), \(R/I\) is linearly compact in the discrete topology.

\begin{example} \label{noncohGor}
Let \(V\) be an almost maximal valuation domain with value  group
$\mathbb{R}$, the additive group of real numbers. (For instance, one
could take $V$ to be the ring of formal power series in one variable
over an arbitrary field $K$ with exponents in the nonnegative reals.
See section II.6 of \cite{Fuchs2001a} for details.) Let $P$ denote
the maximal ideal of $V$ and let $a$ be a nonzero element of $P$.
Then $R=V/aP$ is a non-coherent Gorenstein ring.
\end{example}

\begin{proof} Since $P$ has elements of arbitrarily small positive value, $(0:_Ra)=P/aP$ is not finitely generated.  Thus, $R$ is not coherent.  Note that the ideals of $V$ have the form $xV$ or  $xP$ for $x\in V$.  If $xV\supseteq aP$, it is easily seen that
$(aP:_VxV)=x^{-1}aP$.  Similarly, if $xP\supseteq aP$ then $(aP:_V xP)=x^{-1}aV$.    It follows that for all ideals $I$ of
$R$ one has $(0:_R(0:_R I))=I$; i.e., every ideal of $R$ is an annihilator ideal. We next show that $R$ is injective as an $R$-module.  To see this,
let  $J$ be an arbitrary ideal of $R$
generated by $\{r_{\alpha}\mid \alpha\in \Lambda \}$ where $\Lambda$ is an  index set, and let $\phi:J\to R$ be
an $R$-homomorphism.   Since for any $r\in R$ we have
 $(0:_R(0:_R r))=rR$, it follows that for each $\alpha\in \Lambda$ there exists
exists an $s_{\alpha}\in R$
such that $\phi(r_{\alpha})=s_{\alpha}r_{\alpha}$.   Since every finitely generated ideal of $R$ is principal, we get that for every finite subset $\Lambda_0$ of $\Lambda$ there exists an $s\in R$
such that $s_{\alpha}r_{\alpha}=\phi(r_{\alpha})=sr_{\alpha}$ for all $\alpha\in \Lambda_0$.    In other words,
the set of cosets $\{r_{\alpha}+(0:_Rr_{\alpha})\mid \alpha \in \Lambda\}$ has the finite intersection property.
By the linear compactness of $R$, there exists an $s\in R$ such that $\phi(r_\alpha)=sr_{\alpha}$ for all $\alpha\in \Lambda$.  Thus, $R$ is injective.  It now readily follows from the injectivity of $R$ and the fact that every ideal is an annihilator ideal  that every ideal is reflexive and every cyclic module
 is a member of $\text{\rm G}(R)$.   By induction on the number of generators, one obtains that every finitely generated $R$-module has $G$-dimension zero.  Hence, $R$ is Gorenstein.
\end{proof}

For the remainder of this section we restrict our attention to the properties of coherent Gorenstein rings.   In particular, since every finitely generated projective module is a member of the restricted G-class, we have:

\begin{prop} Let $R$ be a coherent regular ring.  Then $R$ is Gorenstein.
\end{prop}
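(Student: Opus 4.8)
The plan is to reduce immediately to the quasi-local case and then combine Proposition \ref{pdgd} with the trivial inequality $\Gdim \le \rGdim$. By the definition of Gorenstein for a (not necessarily quasi-local) ring, it suffices to show that $R_m$ is Gorenstein for every maximal ideal $m$ of $R$; and $R_m$ is itself regular, directly from the definition of $R$ being regular.

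So I would fix a maximal ideal $m$ and let $J$ be an arbitrary finitely generated ideal of $R_m$. Clearing denominators, one can write $J = IR_m$ for some finitely generated ideal $I$ of $R$. Since $R$ is coherent, $I$ is finitely presented, hence $\fpi^R$; then $J \cong I \otimes_R R_m$ is $\fpi^{R_m}$ by Remark \ref{fpi-flat}. Since $R_m$ is regular, $\pd_{R_m} J < \infty$, so Proposition \ref{pdgd} gives $\rGdim_{R_m} J = \pd_{R_m} J < \infty$, and therefore $\Gdim_{R_m} J \le \rGdim_{R_m} J < \infty$. Thus every finitely generated ideal of $R_m$ has finite G-dimension, i.e.\ $R_m$ is Gorenstein; since $m$ was arbitrary, $R$ is Gorenstein. (If $R$ happens to be quasi-local, the localization step is unnecessary and the same computation applies directly to a finitely generated ideal of $R$.)

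I expect no genuine obstacle here: once the results of Section 3 are available, the statement is essentially a corollary of Proposition \ref{pdgd}. The only points that need a moment's attention are the reduction to $R_m$ --- forced because Gorenstein is defined locally for rings that are not quasi-local --- and the check that the relevant ideal is $\fpi$ over $R_m$, which we obtain by localizing the $\fpi^R$ module $I$ (rather than by invoking coherence of $R_m$), so that Proposition \ref{pdgd} applies.
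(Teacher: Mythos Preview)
Your proof is correct and follows the same line as the paper, which simply observes (in a single sentence preceding the proposition) that finitely generated projective modules lie in the restricted G-class, so finite projective dimension forces finite G-dimension; your invocation of Proposition~\ref{pdgd} is exactly this observation packaged as a proposition. Your reduction to $R_m$ and verification that $J$ is $\fpi^{R_m}$ are the details the paper leaves implicit---note, incidentally, that localizations of coherent rings are coherent, so you could also get $J$ $\fpi^{R_m}$ directly without lifting to $R$.
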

Thus, every valuation domain is Gorenstein, as are polynomial rings in an arbitrary number of variables over a field.
We note that the property of being Gorenstein localizes:

\begin{prop} \label{Gor-local} Let $R$ be a coherent Gorenstein ring and $S$ a multiplicatively closed set.  Then $R_S$ is Gorenstein.
\end{prop}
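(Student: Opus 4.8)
The plan is to exploit the fact that the Gorenstein property is defined locally and that $\rG$-dimension is well behaved under flat base change, thereby reducing everything to the case of localizing a quasi-local coherent Gorenstein ring at a prime ideal.

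First I would record that $R_S$ is coherent, since localizations of coherent rings are coherent (cf.\ Chapter 2 of \cite{Glaz1989a}), so that the definition of Gorenstein even applies to $R_S$. Next I would reduce to the quasi-local case: a maximal ideal $\n$ of $R_S$ contracts to a prime $\q$ of $R$ with $(R_S)_\n\cong R_\q$, and choosing a maximal ideal $\m\supseteq\q$ of $R$ gives $R_\q\cong (R_\m)_{\q R_\m}$, where $R_\m$ is coherent, quasi-local, and Gorenstein. Thus it suffices to prove the following: if $A$ is a coherent quasi-local Gorenstein ring and $\p$ a prime of $A$, then $A_\p$ (which is coherent and quasi-local) is Gorenstein, i.e.\ every finitely generated ideal $J$ of $A_\p$ has finite G-dimension.

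To handle this, I would clear denominators to write $J = IA_\p$ for some finitely generated ideal $I$ of $A$. Since $A$ is coherent, $A/I$ is finitely presented, hence $\fpi^A$; since $A$ is Gorenstein, $\Gdim_A I<\infty$, so $\rGdim_A I<\infty$ by Corollary \ref{G=rGb}, and then $\rGdim_A A/I<\infty$ follows from the exact sequence $0\to I\to A\to A/I\to 0$ and Proposition \ref{Horseshoe} (using $\rGdim_A A = 0$). Now applying Corollary \ref{Gdim-local}(1) to the flat $A$-algebra $A_\p$ and the $\fpi^A$ module $A/I$ yields $\rGdim_{A_\p}(A/I\otimes_A A_\p)\le \rGdim_A A/I<\infty$; since $A/I\otimes_A A_\p\cong A_\p/J$ and $A_\p$ is coherent, Corollary \ref{G=rGb} gives $\Gdim_{A_\p}A_\p/J<\infty$. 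Feeding the exact sequence $0\to J\to A_\p\to A_\p/J\to 0$ back into Proposition \ref{Horseshoe} (noting $J$ is $\fpi^{A_\p}$ by Lemma \ref{fpi}) then gives $\rGdim_{A_\p}J = \Gdim_{A_\p}J<\infty$, which is what we want.

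I do not expect a genuine obstacle here: the argument is essentially bookkeeping around two inputs, namely preservation of coherence under localization and the non-increase of $\rG$-dimension under flat base change (Corollary \ref{Gdim-local}(1)), together with the identity $\Gdim=\rGdim$ for finitely presented modules over coherent rings (Corollary \ref{G=rGb}), which is what allows one to pass back and forth between a finitely generated ideal and its cyclic quotient. The only point requiring care is verifying the $\fpi$ hypotheses needed to invoke Corollary \ref{Gdim-local} and Proposition \ref{Horseshoe}, and these are automatic because all the rings in sight are coherent.
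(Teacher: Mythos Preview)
Your argument is correct and follows essentially the same approach as the paper: reduce to showing $R_\q$ is Gorenstein for a prime $\q$, lift a finitely generated ideal of $R_\q$ to one of $R$ (or of $R_\m$), and invoke the flat base change inequality for $\rGdim$ (Corollary \ref{Gdim-local}) together with $\Gdim=\rGdim$ over coherent rings (Corollary \ref{G=rGb}). The paper just compresses your last several steps into the single line $\rGdim_{R_\p} I_\p \le \rGdim_{R_\m} I_\m = \Gdim_{R_\m} I_\m < \infty$, working directly with the ideal rather than detouring through $A/I$ and back via Proposition \ref{Horseshoe}.
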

\begin{proof} It is enough to show that if $p$ is a prime ideal of $R$ then  $R_p$ is Gorenstein. Let $I$ be a finitely generated ideal contained in $p$ and $m$ a maximal ideal containing $p$.  As $R$ is Gorenstein and coherent,  we have by Corollary \ref{Gdim-local} and Corollary \ref{G=rGb}
$$\rGdim_{R_p} I_p\le \rGdim_{R_m} I_m=\Gdim_{R_m}I_m<\infty.$$  Hence, $R_p$ is Gorenstein (and coherent).
\end{proof}

We also have:

\begin{prop} \label{Gor-fp} Let $R$ be a quasi-local coherent ring.  The following are equivalent:
\begin{enumerate}
\item $R$ is Gorenstein.
\item Every finitely presented $R$-module has finite G-dimension.
\end{enumerate}
\end{prop}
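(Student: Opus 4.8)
The plan is to treat the two implications separately, with $(2)\Rightarrow(1)$ being essentially immediate and $(1)\Rightarrow(2)$ carrying all of the content. For $(2)\Rightarrow(1)$: since $R$ is coherent, every finitely generated ideal $I$ of $R$ is finitely presented, so $(2)$ applies to the module $M=I$ and gives $\Gdim_RI<\infty$, which is exactly the condition that $R$ be Gorenstein.

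For $(1)\Rightarrow(2)$, let $M$ be a finitely presented $R$-module. Because $R$ is coherent and $M$ is finitely presented, $\Gdim_RM=\rGdim_RM$ by Corollary \ref{G=rGb} (and likewise for every finitely presented module appearing below), so it suffices to prove $\rGdim_RM<\infty$. I would induct on an integer $n$ such that $M$ is generated by $n$ elements. In the base case $n=1$ one has $M\cong R/I$ with $I$ a finitely generated ideal (a cyclic module being finitely presented exactly when its defining ideal is finitely generated); the Gorenstein hypothesis gives $\rGdim_RI<\infty$, so applying Proposition \ref{Horseshoe} to the short exact sequence $0\to I\to R\to R/I\to 0$, in which $I$ and $R$ both have finite $\rG$-dimension, yields $\rGdim_R(R/I)<\infty$. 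For the inductive step $n>1$, pick a submodule $M'\subseteq M$ generated by $n-1$ of the chosen generators, so that $M/M'$ is cyclic. Since $R$ is coherent, the finitely generated submodule $M'$ of the finitely presented module $M$ is again finitely presented (cf.\ Chapter 2 of \cite{Glaz1989a}), and then $M/M'$ is finitely presented by Lemma \ref{fpi}. By the inductive hypothesis $\rGdim_RM'<\infty$, and by the base case $\rGdim_R(M/M')<\infty$ since $M/M'$ is cyclic; a final application of Proposition \ref{Horseshoe} to $0\to M'\to M\to M/M'\to 0$ then gives $\rGdim_RM<\infty$, completing the induction.

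I do not expect a genuine obstacle; the only delicate point is the bookkeeping of finite presentedness along the two short exact sequences. Coherence is used precisely here — to know that a finitely generated submodule of a finitely presented module is finitely presented — and this is what keeps both Corollary \ref{G=rGb} (hence the identification $\Gdim=\rGdim$) and Proposition \ref{Horseshoe} applicable at every stage of the induction.
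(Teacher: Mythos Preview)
Your proof is correct and follows essentially the same approach as the paper: both directions are handled identically, with $(1)\Rightarrow(2)$ proved by induction on the number of generators, using coherence to ensure the submodule $M'$ is finitely presented and then invoking Proposition \ref{Horseshoe} on the short exact sequence $0\to M'\to M\to M/M'\to 0$. The only cosmetic difference is that for $(2)\Rightarrow(1)$ you apply (2) directly to the finitely presented ideal $I$, whereas the paper applies it to $R/I$; both are equally valid.
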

\begin{proof} An ideal $I$ is finitely generated if and only if $R/I$ is finitely presented.  Hence, (2) implies (1) is clear.  Suppose $R$ is Gorenstein and let $M$ be a finitely presented $R$-module generated by $n$ elements.  We use induction on $n$ to prove $\rGdim_RM<\infty$.  This is clear if $n\le 1$.  Suppose $n>1$.  Then there exists a submodule $M'$ of $M$ generated by $n-1$ elements such that $M/M'$
is cyclic.  As $R$ is coherent and $M$ is finitely presented, $M'$ is also finitely presented.   Hence, $\rGdim_RM'$ and $\rGdim_RM/M'$ are finite.  By Proposition \ref{Horseshoe}, $\rGdim_RM<\infty$.
\end{proof}

In \cite{Hamilton2007a} a definition for  an arbitrary  commutative ring to be Cohen-Macaulay is given which agrees with the usual definition if the ring is Noetherian.  We will show below that every coherent Gorenstein ring is Cohen-Macaulay.  Let $R$ be a ring and $\mathbf x$ denote a finite sequence $x_1,\dots,x_n$ of elements of $R$.  For an $R$-module $M$, let $\text{\v H}^i_{\mathbf x}(M)$ denote the $i$th \v Cech cohomology of $M$ with respect to $\mathbf x$ and  $\lc^i_{\mathbf x}(M)$ the $i$th local cohomology of $M$ (i.e., the $i$th right derived functor of the $(\mathbf x)$-torsion functor).    The sequence $\mathbf x$ is called {\it weakly proregular} if for all $i\ge 0$ and all $R$-modules $M$ the natural map $ \lc^i_{\mathbf x}(M)\to\text{\v H}^i_{\mathbf x}(M)$ is an isomorphism (cf. \cite{Schenzel2003a}).   A sequence $\mathbf x$ of length $n$ is called a {\it parameter sequence} if $\mathbf x$ is weakly proregular, $(\mathbf x)R\neq R$, and $\lc^n_{\mathbf x}(R)_p\neq 0$ for all prime ideals containing $(\mathbf x)$.
The sequence $\mathbf x$ is a {\it strong parameter sequence} if $x_1,\dots,x_i$ is a parameter sequence for all $1\le i\le n$.   A ring $R$ is called {\it Cohen-Macaulay} if every strong parameter sequence on $R$ is a regular sequence.    It is not known if this property localizes;  thus, we say that $R$ is {\it locally Cohen-Macaulay} if $R_p$ is Cohen-Macaulay for all prime ideals $p$ of $R$.  It is easily seen that locally Cohen-Macaulay rings are Cohen-Macaulay; see \cite{Hamilton2007a} for details.

\begin{prop} \label{Gor-CM} Let $R$ be a coherent Gorenstein ring.  Then $R$ is locally Cohen-Macaulay.
\end{prop}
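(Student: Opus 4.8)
The plan is to reduce to the quasi-local case and then induct on the length of a strong parameter sequence, the heart of the matter being that its first term is a non-zero-divisor. Since the Gorenstein property localizes (Proposition \ref{Gor-local}) and a localization of a coherent ring is coherent, it suffices to show that a coherent quasi-local Gorenstein ring $R$ is Cohen-Macaulay; that is, that every strong parameter sequence on $R$ is a regular sequence. So let $\mathbf{x}=x_1,\dots,x_n$ be a strong parameter sequence, and note $\mathbf{x}\subseteq\m$ since $(\mathbf{x})R\ne R$. I will argue by induction on $n$, the case $n=0$ being vacuous.

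The key step, which also disposes of the case $n=1$, is to prove that $x_1$ is a non-zero-divisor on $R$. Suppose not, and set $K=(0:_Rx_1)$; this is nonzero, and finitely presented because $R$ is coherent. Since $x_1K=0$ we have $x_1\in\operatorname{ann}_RK$, so $x_1$ lies in every prime of $\Supp K$. Localizing $R$ at a minimal prime of $\Supp K$ (this is permissible: $x_1$, being the initial segment of a strong parameter sequence, is itself a parameter sequence, so $\lc^1_{x_1}(R)$ remains nonzero after localizing at any prime containing $x_1$), we may assume $\Supp K=\{\m\}$, equivalently $\sqrt{\operatorname{ann}_RK}=\m$. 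From the identity $\lc^1_{x_1}(R)=\varinjlim_t\Ext^1_R(R/(x_1^t),R)$ and $\lc^1_{x_1}(R)\ne 0$ we get $\Ext^1_R(R/(x_1^t),R)\ne 0$ for some $t$. The module $R/(x_1^t)$ is finitely presented ($R$ coherent) of finite G-dimension ($R$ Gorenstein), and by Corollary \ref{G=rGa} its G-dimension is at least $1$, so the Auslander-Bridger formula (Corollary \ref{AB-coherent}) gives $\depth R\ge 1$. Now pass to the faithfully flat extension $S=R[X]_{\m R[X]}$: by Proposition \ref{depth-basic}(4) there is a non-zero-divisor $y$ on $S$ lying in the maximal ideal $\m S$, while $K\otimes_RS$ is a nonzero ideal of $S$ supported precisely at the closed point (since $S/\m S$ is a field, $\m S$ is the only prime of $S$ lying over $\m$, and $\Supp K=\{\m\}$). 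Hence $y\in\m S=\sqrt{\operatorname{ann}_S(K\otimes_RS)}$, so $y^t(K\otimes_RS)=0$ for some $t$, forcing $K\otimes_RS\subseteq(0:_Sy^t)=0$ and contradicting $K\ne 0$. Therefore $x_1$ is a non-zero-divisor.

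Since $x_1$ is a non-unit non-zero-divisor, $R/(x_1)$ is coherent and quasi-local, and it is Gorenstein by Theorem \ref{thmB}(1) (equivalently by Corollary \ref{changeofrings3}). Moreover $x_2,\dots,x_n$ is a strong parameter sequence on $R/(x_1)$, by the standard behaviour of strong parameter sequences modulo a non-zero-divisor (see \cite{Hamilton2007a}). By the induction hypothesis $x_2,\dots,x_n$ is a regular sequence on $R/(x_1)$, so $\mathbf{x}$ is a regular sequence on $R$, completing the induction.

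The main obstacle is the base case, namely passing from the non-vanishing of $\lc^1_{x_1}(R)$ to the non-zero-divisor property of $x_1$. This is exactly where the Gorenstein hypothesis enters, through the Auslander-Bridger formula, and where one is forced to leave the coherent category by base-changing to $R[X]_{\m R[X]}$ in order to produce an honest non-zero-divisor; verifying that supports and local cohomology transfer correctly under this base change is routine but requires some care.
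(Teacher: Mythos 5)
Your reduction to the quasi-local case and your key step --- that the first entry of a strong parameter sequence is a non-zero-divisor on a quasi-local coherent Gorenstein ring --- are sound, and the mechanism is the same one the paper exploits: localizing at a prime containing $x_1$, extracting a nonvanishing $\Ext^1_R(R/(x_1^t),R)$ from $\lc^1_{x_1}(R)_p\neq 0$ (legitimate over a coherent ring, since $R/(x_1^t)$ is $\fpi$ and these Ext modules localize), and then using Corollary \ref{G=rGa} together with the Auslander--Bridger formula (Corollary \ref{AB-coherent}) to force $\depth R_p\geq 1$, which is incompatible with the nonzero finitely presented module $(0:_Rx_1)_p$ being supported only at the maximal ideal. (Your detour through $R[X]_{\m R[X]}$ to produce an honest non-zero-divisor also works, though Proposition \ref{depth-coherent} would give the contradiction more directly.)

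The gap is in your inductive step. After showing $x_1$ is a non-zero-divisor you pass to $R/(x_1)$ and assert that $x_2,\dots,x_n$ is a strong parameter sequence on $R/(x_1)$ ``by the standard behaviour of strong parameter sequences modulo a non-zero-divisor.'' That assertion is doing essential work and is neither proved here nor obviously available: a parameter sequence on $R/(x_1)$ must be weakly proregular \emph{over the quotient ring} and must satisfy $\lc^{\,i-1}_{x_2,\dots,x_i}(R/(x_1))_q\neq 0$ for every prime $q$ containing the ideal. The nonvanishing part can be extracted from the long exact sequence of \v Cech cohomology attached to $0\to R\xrightarrow{x_1}R\to R/(x_1)\to 0$ (using that the top cohomology is $x_1$-torsion and nonzero locally), but the descent of weak proregularity is not routine: the natural isomorphism $H_j(x_1^t,\dots,x_i^t;R)\cong H_j(x_2^t,\dots,x_i^t;R/(x_1^t))$ involves the varying rings $R/(x_1^t)$, not the fixed ring $R/(x_1)$, so pro-vanishing for $R$ does not immediately transfer. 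The paper is visibly wary of exactly this kind of reduction --- it points out that Cohen--Macaulayness itself can fail to pass to $R/(x)$ for $x$ a non-zero-divisor --- and its own proof is structured to avoid quotienting altogether: it keeps the whole sequence in $R$, assumes by induction that $\mathbf x'=x_1,\dots,x_{n-1}$ is regular, uses \cite{Hamilton2007a} (Lemma 2.8) to find a prime $p\ni x_n$ with $\depth R_p/(\mathbf x')R_p=0$, hence $\depth R_p=n-1$, and then contradicts $\lc^{\,n}_{\mathbf x}(R)_p\neq 0$ because $\rGdim_{R_p} R_p/(\mathbf x)^tR_p\le n-1$ kills every $\Ext^n$; only localization, which is built into the definition of a parameter sequence, is needed. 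To complete your argument you must either prove (or cite precisely) the descent of strong parameter sequences modulo $x_1$, or restructure the induction along the paper's lines.
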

\begin{proof}  Since $R_p$ is coherent Gorenstein for all primes $p$ of $R$, it suffices to prove that a coherent quasi-local Gorenstein ring is Cohen-Macaulay.  Let $\mathbf x=x_1,\dots,x_n$ be a strong parameter sequence of $R$.  We proceed by induction on $n$ to show that $\mathbf x$ is a regular sequence.  Let $\mathbf x'$ denote the sequence $x_1,\dots,x_{n-1}$.  (In the case $n=1$, $\mathbf x'$ denotes the empty sequence, which generates the zero ideal.)  By way of contradiction, we assume $\mathbf x'$ is a regular sequence but $x_n$ is a zero-divisor on $R/(\mathbf x')$.    Then there exists a prime $p$ of $R$ such that $x_n\in p$ and
$\depth R_p/(\mathbf x')R_p=0$  \cite[Lemma 2.8]{Hamilton2007a}.  Hence, $\depth R_p=n-1$.  By localizing at $p$, we can assume $R$ is a quasi-local coherent Gorenstein ring with $\depth R=n-1$ and
$\mathbf x$ is a parameter sequence of length $n$.  In particular, $H^{n}_{\mathbf x}(R)\neq 0$.
As $R$ is coherent Gorenstein and $(\mathbf x)^{t}$ is finitely generated for all $t$, we have $\rGdim_R R/(\mathbf x)^t<\infty$ for all $t$.  By Theorem \ref{AB}, $\rGdim_RR/(\mathbf x)^t\le \depth R=n-1$ for all $t$, and thus $\Ext^n_R(R/(\mathbf x)^t,R)=0$ for all $t$.   But then
$$H^n_{\mathbf x}(R)\cong \directlimit{t}{\Ext^n_R(R/(\mathbf x)^t,R)}=0,$$
a contradiction.  Hence, $\mathbf x$ is a regular sequence and $R$ is Cohen-Macaulay.
\end{proof}

We next show that the Gorenstein property is preserved by passing to (and lifting from) a quotient modulo a non-zero-divisor.   This certainly does not hold for regularity.  We note also that, in contrast to the Noetherian case, there are examples of quasi-local Cohen-Macaulay rings which do not remain Cohen-Macaulay modulo a non-zero-divisor.  (See Example 4.9 of \cite{Hamilton2007a}.)

\begin{thm} \label{Gor-modx} Let $R$ be a quasi-local coherent ring with maximal ideal $m$ and $x\in m$ a non-zero-divisor on $R$.   Then $R$ is Gorenstein if and only if $R/(x)$ is Gorenstein.
\end{thm}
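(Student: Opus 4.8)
The plan is to derive both implications from the change-of-rings theorems of Section~3 together with Proposition~\ref{Gor-fp}, which identifies the Gorenstein property of a quasi-local coherent ring with the finiteness of $\rG$-dimension for all its finitely presented modules. The first thing to note is that $\ov R:=R/(x)$ is again quasi-local, with maximal ideal $m/(x)$, and coherent, being the quotient of a coherent ring by a finitely generated ideal; in particular $x\in m=J(R)$, so the hypotheses ``$x\in J(R)$'' appearing in Section~3 are automatically met, and by Corollary~\ref{G=rGb} we have $\Gdim=\rGdim$ for finitely presented modules over both $R$ and $\ov R$.

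In the forward direction I would assume $R$ is Gorenstein, take a finitely generated ideal $\ov I=I/(x)$ of $\ov R$ (with $I\supseteq(x)$ finitely generated in $R$), and argue as follows. Since $R$ is coherent, $\ov R/\ov I=R/I$ is a finitely presented $R$-module, hence of finite $\rG$-dimension over $R$ by Proposition~\ref{Gor-fp}; as $x$ is a non-zero-divisor on $R$ annihilating $R/I$, Theorem~\ref{changeofrings1} then gives $\rGdim_{\ov R}(\ov R/\ov I)=\rGdim_R(R/I)-1<\infty$. Feeding this into Proposition~\ref{Horseshoe} applied to $0\to\ov I\to\ov R\to\ov R/\ov I\to 0$, and using $\rGdim_{\ov R}\ov R=0$, yields $\rGdim_{\ov R}\ov I<\infty$, so $\ov R$ is Gorenstein.

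For the converse, assuming $\ov R$ is Gorenstein, I would use Proposition~\ref{Gor-fp} to reduce to showing that every finitely presented $R$-module $N$ has finite $\rG$-dimension, and handle this by resolving $N$ one step: choose (using coherence of $R$) a short exact sequence $0\to K\to F\to N\to 0$ with $F$ finitely generated free and $K$ finitely presented. Since $x$ is a non-zero-divisor on $R$, it is a non-zero-divisor on $F$ and hence on its submodule $K$; therefore $K/xK$ is a finitely presented $\ov R$-module by Remark~\ref{fpi-nzd}, and $\rGdim_{\ov R}(K/xK)<\infty$ because $\ov R$ is Gorenstein. As $x\in J(R)$ and $R$ is coherent, Proposition~\ref{Gdim-modx}(2) then transports this to $\rGdim_R K=\rGdim_{\ov R}(K/xK)<\infty$, and Proposition~\ref{Horseshoe} applied to $0\to K\to F\to N\to 0$ finishes with $\rGdim_R N\le\rGdim_R K+1<\infty$.

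I expect the converse to be the only real obstacle: a prescribed finitely generated ideal of $R$ need not contain $x$, so one cannot simply reduce modulo $x$, and the non-Noetherian setting blocks the usual inductive or prime-avoidance arguments (one also cannot invoke an FP-injective dimension characterization, since $\depth R$ may be infinite). The device that makes it work is the passage to a first syzygy: any syzygy of a finitely presented module embeds in a free module and is thus automatically $x$-torsion-free, which is precisely the hypothesis needed to lift finiteness of $\rG$-dimension from $\ov R$ to $R$ through Proposition~\ref{Gdim-modx}(2). The remaining points --- coherence of $\ov R$ and of $K$, and the degenerate cases such as $I=R$, $N=0$, or $K=0$ --- are routine.
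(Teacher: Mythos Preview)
Your proposal is correct and follows essentially the same route as the paper. The forward direction matches the paper almost verbatim (lift $\ov I$ to $I\supseteq(x)$, apply Theorem~\ref{changeofrings1} to $R/I$); in the converse, the paper first treats directly the case where $x$ is a non-zero-divisor on the given module and only passes to a first syzygy in the remaining case, whereas you take the syzygy uniformly---a cosmetic difference, since the syzygy step subsumes the easy case. Your explicit identification of the key device (a first syzygy sits inside a free module, hence is automatically $x$-torsion-free, enabling Proposition~\ref{Gdim-modx}(2)) is exactly the idea the paper uses.
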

\begin{proof} Assume $R$ is Gorenstein.  Let $J$ be a finitely generated ideal of $R/(x)$.  Then $J=I/(x)$ for some finitely generated ideal $I$ of $R$ containing $(x)$.  As $R$ is (coherent) Gorenstein, $\rGdim R/I<\infty$.  By Theorem \ref{changeofrings1}, $\rGdim_{R/(x)}R/I<\infty$, which implies $\rGdim_{R/(x)}J<\infty$.  Thus, $R/(x)$ is Gorenstein.

Conversely, assume $R/(x)$ is Gorenstein.  Let $M$ be a finitely presented $R$-module.  Assume first that $x$ is a non-zero-divisor on $M$.  Then $M/xM$ is a finitely presented $R/(x)$-module and by Proposition \ref{Gor-fp}, $\rGdim_{R/(x)}M/xM<\infty$.  By Proposition \ref{Gdim-modx}, $\rGdim_RM<\infty$.  If $x$ is a zero-divisor on $M$ let $0\to K\to G\to M\to 0$ be a short exact sequence where $G$ is in $\text{\rm G}(R)$.  As $R$ is coherent, $K$ is finitely presented.  Furthermore, $x$ is a non-zero-divisor on $G$ and hence on $K$.  Thus, $\rGdim_RK<\infty$.  Hence, $\rGdim_RM<\infty$.
\end{proof}

Hence, for example,  if $V$ is a valuation domain and $x\in V$ is a non-unit non-zero-divisor, then $V/xV$ is Gorenstein.

We now aim to prove that the Gorenstein property passes to finitely generated polynomial ring extensions, assuming coherence is preserved.  To do this,  we first need a folklore result which can be traced at least as far back as 1966 (\cite{Jensen1966a}; see also \cite{Vasconcelos1976a}).  The proof here is of a different style than the one found in the above references,  although the argument is  essentially the same.

\begin{lemma} \label{R[x]-ses} Let $R$ be a ring, $x$ an indeterminate over $R$, and $M$ an $R[x]$-module.    There exists a short exact sequence of $R[x]$-modules
$$0\to R[x]\otimes_RM\to  R[x]\otimes_RM\to M\to 0.$$
\end{lemma}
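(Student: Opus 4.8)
The plan is to write down the three maps in the sequence explicitly and then verify exactness by a straightforward "leading term" argument using the natural grading on $R[x]\otimes_RM$. Set $S=R[x]$ and regard $M$ as an $R$-module by restriction of scalars, so that $S\otimes_RM$ becomes an $S$-module through its left-hand factor; in this way every element of $S\otimes_RM$ has a unique expression $\sum_{i=0}^n x^i\otimes m_i$ with $m_i\in M$. The surjection $S\otimes_RM\to M$ will be the multiplication map $\mu$ sending $f\otimes m$ to $fm$, where $fm$ uses the \emph{given} $S$-module structure on $M$; this is $S$-linear for the left-factor structure and is visibly surjective since $1\otimes m\mapsto m$. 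The injection will be the map $\psi\colon S\otimes_RM\to S\otimes_RM$ defined by $\psi(f\otimes m)=xf\otimes m-f\otimes xm$, which is $S$-linear (each summand is $S$-linear in $f$) and satisfies $\mu\circ\psi=0$ because $x f\cdot m-f\cdot(xm)=0$ by commutativity of $S$.

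Next I would check that $\psi$ is injective. Given a nonzero element $u=\sum_{i=0}^{n}x^i\otimes m_i$ with $m_n\neq 0$, one computes $\psi(u)=x^{n+1}\otimes m_n+(\text{terms of $x$-degree}\le n)$; since the expansion of an element of $S\otimes_RM$ in the monomials $x^i\otimes(-)$ is unique, the term $x^{n+1}\otimes m_n$ is nonzero and cannot be cancelled, so $\psi(u)\neq 0$.

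For exactness in the middle I already have $\im\psi\subseteq\ker\mu$, and for the reverse inclusion I would induct on the $x$-degree of $u=\sum_{i=0}^{n}x^i\otimes m_i\in\ker\mu$. If $n=0$ then $\mu(u)=m_0=0$, so $u=0$. If $n\ge 1$, replace $u$ by $u-\psi(x^{n-1}\otimes m_n)$; the $x^n\otimes m_n$ terms cancel, yielding an element of $x$-degree at most $n-1$ that still lies in $\ker\mu$ (as $\mu\psi=0$), hence in $\im\psi$ by the induction hypothesis, and therefore $u\in\im\psi$ as well.

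The only real point requiring care is bookkeeping between the two roles $x$ plays: the $S$-module structure on $S\otimes_RM$ used to make $\mu$ and $\psi$ morphisms of $S$-modules comes from the left tensor factor, whereas the symbol $xm$ in the formula for $\psi$ refers to the original action of $x\in S$ on $m\in M$. Once this distinction is kept straight, the degree-filtration computations above are entirely routine, so I expect no genuine obstacle.
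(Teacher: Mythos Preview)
Your argument is correct. The maps $\mu$ and $\psi$ are well defined $R[x]$-linear maps, the leading-term observation shows $\psi$ is injective, and the degree induction gives exactness in the middle; nothing is missing.

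Your route, however, differs from the paper's. The paper introduces a second indeterminate $t$, lets $t$ act on $M$ by the rule $t\cdot m:=xm$ (so $M$ becomes an $R[x,t]$-module), and then tensors the obvious exact sequence
\[
0\to R[x,t]\xrightarrow{\,x-t\,}R[x,t]\to R[t]\to 0
\]
over $R[t]$ with $M$; since $R[x,t]$ is free over $R[t]$, exactness is preserved, and the identification $R[x,t]\otimes_{R[t]}M\cong R[x]\otimes_RM$ yields the desired sequence at once. Unwinding this, the middle map is precisely your $\psi(f\otimes m)=xf\otimes m-f\otimes xm$, so the two proofs produce the same sequence; the difference is that the paper obtains exactness for free from flat base change, while you verify it directly by a filtration/degree argument. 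Your approach is more elementary and self-contained, requiring no auxiliary variable or Tor-vanishing; the paper's approach is slicker and makes the origin of the map (multiplication by $x-t$) transparent, at the cost of an extra layer of identification.
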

\begin{proof} Let $t$ be an indeterminate over $R[x]$ and let $t$ act on $M$ via $t\cdot m:=xm$ for all $m\in M$.  In this way one can consider $M$  as an $R[x,t]$-module.
Consider the short exact sequence of $R[x,t]$-modules
$$0\to R[x,t]\xrightarrow{x-t} R[x,t] \to R[t]\to 0.$$
Applying $-\otimes_{R[t]}M$, we get the short exact sequence of $R[x,t]$-modules
$$0\to R[x,t]\otimes_{R[t]}M\to R[x,t]\otimes_{R[t]}M\to R[t]\otimes_{R[t]}M\to 0.$$
Now, $R[x,t]\otimes_{R[t]}M\cong (R[x]\otimes_RR[t])\otimes_{R[t]}M\cong R[x]\otimes_RM$ as $R[x,t]$-modules.  Thus, we have a short exact sequence of $R[x,t]$-modules
$$0\to R[x]\otimes_RM\to R[x]\otimes_RM\to M\to 0.$$
Restricting scalars to $R[x]$ we get the desired result.
\end{proof}

This leads to another change of rings result for restricted Gorenstein dimension:

\begin{cor} \label{Gor-cor}Let $R$ be a ring, $x$ an indeterminate over $R$, and $M$ an $R[x]$-module which is $\fpi^R$.  Then $\rGdim_{R[x]}M\le \rGdim_RM+1$.
\end{cor}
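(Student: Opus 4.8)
If $\rGdim_R M=\infty$ the inequality is vacuous, so assume $n:=\rGdim_R M<\infty$. The plan is to feed the short exact sequence of Lemma \ref{R[x]-ses},
$$0\to R[x]\otimes_R M\to R[x]\otimes_R M\to M\to 0,$$
into Proposition \ref{Horseshoe}(3). To do this I must check the three hypotheses: that all three modules are $\fpi^{R[x]}$, and that the outer two have $\rG$-dimension at most $n$ over $R[x]$.

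First I would handle $R[x]\otimes_R M$. Since $R[x]$ is a (faithfully) flat $R$-algebra and $M$ is $\fpi^R$ by hypothesis, Remark \ref{fpi-flat}(1) gives that $R[x]\otimes_R M$ is $\fpi^{R[x]}$. Moreover, because $\rGdim_R M$ is defined and finite, Corollary \ref{Gdim-local}(1) applied to the flat extension $R\to R[x]$ yields
$$\rGdim_{R[x]}\bigl(R[x]\otimes_R M\bigr)\le \rGdim_R M=n.$$
Next I would observe that $M$ itself is $\fpi^{R[x]}$: in the short exact sequence above the first two terms are $\fpi^{R[x]}$, so by Lemma \ref{fpi} (the consequence that any two $\fpi$ modules in a short exact sequence force the third to be $\fpi$) the quotient $M$ is $\fpi^{R[x]}$ as well.

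Now all the hypotheses of Proposition \ref{Horseshoe}(3) are met for the sequence $0\to L\to M'\to N\to 0$ with $L=M'=R[x]\otimes_R M$ (each of $\rG$-dimension $\le n$ over $R[x]$) and $N=M$. That proposition then gives $\rGdim_{R[x]}M\le n+1=\rGdim_R M+1$, as desired. There is no serious obstacle here; the only point requiring a moment's care is to note that one cannot apply Proposition \ref{Horseshoe} until $M$ has been shown to be $\fpi$ \emph{over} $R[x]$ (and not merely over $R$), which is exactly what the exact sequence of Lemma \ref{R[x]-ses} supplies.
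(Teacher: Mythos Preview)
Your proposal is correct and follows essentially the same route as the paper: invoke the short exact sequence of Lemma~\ref{R[x]-ses}, use flat base change (Corollary~\ref{Gdim-local}) to bound $\rGdim_{R[x]}(R[x]\otimes_R M)$, and then apply Proposition~\ref{Horseshoe}(3). The only cosmetic differences are that the paper cites the faithfully flat equality from Corollary~\ref{Gdim-local}(2) rather than the inequality from part~(1), and asserts ``it is easily seen that $M$ is $\fpi^{R[x]}$'' up front, whereas you (quite reasonably) extract this from the exact sequence itself via Lemma~\ref{fpi}.
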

\begin{proof}  It is easily seen that $M$ is also $\fpi^{R[x]}$ and hence $\rGdim_{R[x]}M$ is defined.  Clearly, we may assume $\rGdim_RM<\infty$.  Since $R[x]$ is faithfully flat as an $R$-module, $\rGdim_RM=\rGdim_{R[x]}R[x]\otimes_RM$ by Proposition \ref{Gdim-local}.  The result now follows from Lemma \ref{R[x]-ses} and Proposition \ref{Horseshoe}.
\end{proof}

\begin{thm} Let $R[x]$ be a polynomial ring over $R$ and assume $R[x]$ is coherent.  Then $R$ is Gorenstein if and only if $R[x]$ is Gorenstein.
\end{thm}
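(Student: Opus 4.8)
The plan is to prove the two implications separately. Note first that $R$ is coherent, since $x$ is a non-zero-divisor on the coherent ring $R[x]$ and hence $R\cong R[x]/(x)$ is coherent \cite{Glaz1989a}. For $R[x]$ Gorenstein $\Rightarrow$ $R$ Gorenstein I would argue by localization: given a maximal ideal $\m$ of $R$, the ideal $\n:=\m R[x]+xR[x]$ is maximal in $R[x]$ (as $R[x]/\n\cong R/\m$), the localization $R[x]_\n$ is Gorenstein by Proposition \ref{Gor-local} and is quasi-local and coherent, and $x$ is a non-zero-divisor lying in its maximal ideal with $R[x]_\n/(x)\cong R_\m$; so Theorem \ref{Gor-modx} forces $R_\m$ to be Gorenstein, and since $\m$ was arbitrary, $R$ is Gorenstein.

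For the converse, fix a maximal ideal $\n$ of $R[x]$; I must show $R[x]_\n$ is Gorenstein. Setting $\p:=\n\cap R$, the ring $R_\p$ is coherent Gorenstein (Proposition \ref{Gor-local}), $R_\p[x]$ is a localization of $R[x]$ hence coherent, and $\n$ extends to a maximal ideal $\n'$ of $R_\p[x]$ with $(R_\p[x])_{\n'}=R[x]_\n$; replacing $R$ by $R_\p$ and $\n$ by $\n'$, I may assume $R$ is quasi-local coherent Gorenstein with maximal ideal $\m$ and $\n\cap R=\m$. Then $\m R[x]\subseteq\n$ and $\n/\m R[x]$ is a maximal ideal of the polynomial ring $(R/\m)[x]$, so $\n=\m R[x]+fR[x]$ for a monic polynomial $f\in R[x]$ whose image modulo $\m$ is irreducible of some degree $d\ge1$.

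The decisive step is a Noether normalization followed by a localization of the base, arranged precisely to avoid the circularity in a naive appeal to ``$R[f]\cong R[y]$ is Gorenstein.'' Since $f$ is monic of degree $d$ it is transcendental over $R$, so $A:=R[f]$ is a polynomial ring over $R$, $R[x]$ is free over $A$ on $1,x,\dots,x^{d-1}$, and $R[x]\cong A[T]/(h)$ for some monic $h\in A[T]$ of degree $d$. With $\mathfrak N:=\n\cap A$ one checks $\mathfrak N=\m A+fA$, corresponding under $A\cong R[y]$ to the standard maximal ideal $(\m,y)$. Put $\tilde R:=A_{\mathfrak N}$ and $C:=R[x]\otimes_A\tilde R\cong\tilde R[T]/(g)$ with $g\in\tilde R[T]$ monic of degree $d$, so $C$ is free of rank $d$ over $\tilde R$. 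I then record two facts: (i) $R[x]_\n=C$, because $C$ is integral over the quasi-local $\tilde R$ so all its maximal ideals contract to $\mathfrak m_{\tilde R}$, while $C/\mathfrak m_{\tilde R}C\cong(R/\m)[T]/(\bar f)$ is a field ($\bar f$ being irreducible), whence $C$ is quasi-local with maximal ideal $\n C$ and equals its own localization at $\n$; and (ii) $\tilde R$ is quasi-local coherent (a localization of $R[y]\cong R[x]$) and Gorenstein, since $y$ is a non-zero-divisor in $\mathfrak m_{\tilde R}$ with $\tilde R/(y)\cong R$, so Theorem \ref{Gor-modx} applies using that $R$ is Gorenstein.

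To finish, by Proposition \ref{Gor-fp} (applicable since $C=R[x]_\n$ is quasi-local and coherent) it suffices to show every finitely presented $C$-module $N$ has $\rGdim_C N<\infty$. As $C$ is free of finite rank over $\tilde R$, such an $N$ is finitely presented, hence $\fpi^{\tilde R}$, over $\tilde R$, so $\rGdim_{\tilde R}N<\infty$ by Proposition \ref{Gor-fp}; viewing $N$ as an $\tilde R[T]$-module through $\tilde R[T]\twoheadrightarrow C$, Corollary \ref{Gor-cor} gives $\rGdim_{\tilde R[T]}N\le\rGdim_{\tilde R}N+1<\infty$; and since $g$ is a non-zero-divisor on $\tilde R[T]$ (being monic) with $gN=0$, Theorem \ref{changeofrings1} — whose crucial virtue here is that it requires no coherence hypothesis, as $\tilde R[T]$ need not be coherent — yields $\rGdim_C N=\rGdim_{\tilde R[T]}N-1<\infty$. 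Hence $R[x]_\n=C$ is Gorenstein, and so $R[x]$ is Gorenstein. I expect the converse to be the main obstacle, and within it the genuinely hard point is the idea of localizing the Noether-normalization base $R[f]\cong R[y]$ at the standard maximal ideal \emph{before} invoking the Gorenstein property (which is what breaks the circularity), together with the commutative-algebra bookkeeping needed to identify $R[x]_\n$ with $\tilde R[T]/(g)$ and to run the coherence-free change-of-rings results over the possibly non-coherent ring $\tilde R[T]$.
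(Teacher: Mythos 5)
Your proof is correct, but after the first reduction (localize at $\n\cap R$, so $R$ is quasi-local Gorenstein and $\n=(\m,f)$ with $f$ monic and irreducible mod $\m$) it follows a genuinely different route from the paper's. The paper keeps $R$ itself as the Gorenstein base: since $R[x]/fR[x]$ is finite free over $R$, for a finitely generated ideal $J=I_P$ of $R[x]_P$ the module $I/fI$ is finitely presented over $R$ and hence has finite $\rG$-dimension; Corollary \ref{Gor-cor} lifts this to $R[x]$, localization carries it to $R[x]_P$, and Corollary \ref{changeofrings3}(2), applied over the coherent quasi-local ring $R[x]_P$, converts finiteness for $J/fJ$ into finiteness for $J$. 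You instead change the base: you present $R[x]_\n$ as the hypersurface $\tilde R[T]/(g)$, finite free over $\tilde R=R[f]_{(\m,f)}$, first prove $\tilde R$ is Gorenstein via Theorem \ref{Gor-modx} (using $\tilde R/(f)\cong R$, which is indeed what breaks the circularity you mention), and then, for an arbitrary finitely presented module over $C=R[x]_\n$, restrict to $\tilde R$ (Proposition \ref{Gor-fp}), go up to $\tilde R[T]$ with Corollary \ref{Gor-cor}, and descend to $C$ with the coherence-free Theorem \ref{changeofrings1}. Both arguments rest on the same tools (finite freeness forced by the monic $f$, Corollary \ref{Gor-cor}, and a change of rings modulo a non-zero-divisor), but the paper's version is shorter because it never needs the auxiliary local base $\tilde R$ or Theorem \ref{Gor-modx} in this direction, at the price of invoking the coherent change-of-rings result \ref{changeofrings3}(2); yours is longer but keeps the final descent free of any coherence hypothesis on $\tilde R[T]$ and yields directly that every finitely presented $R[x]_\n$-module has finite $\rG$-dimension. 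Your forward direction coincides with the paper's, carried out a bit more carefully by localizing at $(\m,x)$ before applying Theorem \ref{Gor-modx}.
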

\begin{proof}
Suppose $R[x]$ is Gorenstein.  As $x$ is a non-zero-divisor on $R$, $R\cong R[x]/(x)$ is Gorenstein (and coherent) by Theorem \ref{Gor-modx}.  Conversely, suppose $R$ is Gorenstein and let $P$ be a maximal ideal of $R[x]$.  By localizing at $q=P\cap R$, we can assume $R$  is quasi-local coherent Gorenstein (by Proposition \ref{Gor-local})   with maximal ideal $m$ and $P\cap R=m$.  Now, $P=(m,f)R[x]$ for some monic polynomial  $f\in R[x]$.
Then $R[x]/fR[x]$ is a free $R$-module of finite rank.    Let $J$ be a finitely generated ideal of $R[x]_{P}$.
Then $J=I_{P}$ for some finitely generated (hence, finitely presented) ideal $I$ of $R[x]$.    Consequently, $I/fI$
is a finitely presented $R[x]/fR[x]$-module, and hence finitely presented as an $R$-module as well.  Thus, $\rGdim_RI/fI<\infty$.  By  Corollary \ref{Gor-cor}, $\rGdim_{R[x]}I/fI<\infty$.  Localizing, we have $\rGdim_{R[x]_{P}}J/fJ<\infty$.  Since $f\in P$ is a non-zero-divisor on both $R[x]_P$ and $J$, we have that $\rGdim_{R[x]_P}J<\infty$ by Corollary \ref{changeofrings3}.  Hence, $R[x]_P$ is Gorenstein.
\end{proof}

It is known that the direct limit of a flat family of  coherent regular rings is coherent regular (e.g., \cite[Theorem 6.2.2]{Glaz1989a}).  The analogous result holds for Gorenstein rings:

\begin{prop} \label{directlimit}  Let $\{R_i\}_{i\in \Lambda}$ be a direct system of commutative rings over a directed index set $\Lambda$.
Suppose each $R_i$ is a coherent Gorenstein ring and the maps $R_i\to R_j$  are flat for all $i\le j$.
Then $\directlimit{}{R_i}$ is a coherent Gorenstein ring.
\end{prop}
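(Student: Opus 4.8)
The plan is to invoke first the classical fact that a directed colimit of coherent rings along flat maps is coherent, and then to verify the Gorenstein property at each maximal ideal of the colimit by faithfully flat descent from one of the $R_i$'s.

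Write $R=\varinjlim R_i$ and let $\phi_i:R_i\to R$ be the canonical maps. Since $R$ is the colimit of the flat $R_i$-algebras $R_j$ ($j\ge i$), it is flat over each $R_i$. First I would record the standard observation that every finitely generated ideal $I$ of $R$ is \emph{extended}: using directedness of $\Lambda$ to collect finitely many generators at a single stage, $I=\phi_i(I_i)R$ for some index $i$ and some finitely generated ideal $I_i$ of $R_i$; flatness then identifies $I$ with $I_i\otimes_{R_i}R$, which is finitely presented over $R$ by Remark \ref{fpi-flat} because $I_i$ is finitely presented over the coherent ring $R_i$. Hence $R$ is coherent (cf. \cite{Glaz1989a}), and so is every localization $R_{\mathfrak m}$.

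Next, fix a maximal ideal $\mathfrak m$ of $R$; the goal is to show $R_{\mathfrak m}$ is Gorenstein, i.e. that every finitely generated ideal $J$ of $R_{\mathfrak m}$ has finite $G$-dimension. Write $J=IR_{\mathfrak m}$ with $I$ a finitely generated ideal of $R$, and then $I=\phi_i(I_i)R$ as above; set $\mathfrak p=\phi_i^{-1}(\mathfrak m)$. The map $R_i\to R_{\mathfrak m}$ factors through $(R_i)_{\mathfrak p}$, the induced map $(R_i)_{\mathfrak p}\to R_{\mathfrak m}$ is local (as $\phi_i(\mathfrak p)\subseteq\mathfrak m$), and since $R_{\mathfrak m}$ is flat over $R_i$ — hence over $(R_i)_{\mathfrak p}$ — and a flat local homomorphism of quasi-local rings is faithfully flat, $(R_i)_{\mathfrak p}\to R_{\mathfrak m}$ is faithfully flat. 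By Proposition \ref{Gor-local}, $(R_i)_{\mathfrak p}$ is a quasi-local coherent Gorenstein ring, so the finitely presented (hence $\fpi$) module $N:=(R_i)_{\mathfrak p}/I_i(R_i)_{\mathfrak p}$ satisfies $\rGdim_{(R_i)_{\mathfrak p}}N=\Gdim_{(R_i)_{\mathfrak p}}N<\infty$ by Corollary \ref{G=rGb}. Right-exactness of tensor identifies $R_{\mathfrak m}/J$ with $N\otimes_{(R_i)_{\mathfrak p}}R_{\mathfrak m}$, so part (2) of Corollary \ref{Gdim-local} gives
\[
\rGdim_{R_{\mathfrak m}}R_{\mathfrak m}/J=\rGdim_{(R_i)_{\mathfrak p}}N<\infty ,
\]
and since $R_{\mathfrak m}$ is coherent and $R_{\mathfrak m}/J$ is finitely presented, this equals $\Gdim_{R_{\mathfrak m}}R_{\mathfrak m}/J$ by Corollary \ref{G=rGb}. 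As $J$ and $\mathfrak m$ were arbitrary, $R$ is Gorenstein.

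The bulk of the argument is routine once the flat-base-change results of Section 3 are in place; the two points that need care are the (standard) fact that finitely generated ideals of the colimit, and of $R_{\mathfrak m}$, are extended from a single stage of the system, and the verification that $(R_i)_{\mathfrak p}\to R_{\mathfrak m}$ is a faithfully flat \emph{local} homomorphism — this is exactly what permits replacing the (possibly non-quasi-local) ring $R_i$ by the quasi-local ring $(R_i)_{\mathfrak p}$, to which the definition of Gorenstein and Corollary \ref{Gdim-local}(2) apply directly. I do not expect any serious obstacle.
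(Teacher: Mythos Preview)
Your proof is correct and follows essentially the same approach as the paper: reduce to the quasi-local setting, extend a finitely generated ideal from a single stage, and use faithfully flat base change (Corollary \ref{Gdim-local}) to transport finite $\rG$-dimension up to the colimit. The only cosmetic difference is that the paper replaces the whole system by $S_Q\cong\varinjlim (R_i)_{q_i}$ to make every $R_i$ quasi-local at once, whereas you localize just the relevant stage via the local faithfully flat map $(R_i)_{\mathfrak p}\to R_{\mathfrak m}$.
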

\begin{proof} Let $S=\directlimit{}{R_i}$ and $Q$ a prime ideal of $S$.  Since $S_Q\cong \directlimit{}{(R_i){q_i}}$, where $q_i=Q\cap R_i$, we may assume each $R_i$ is quasi-local.   Thus, $R_i\to S$ is faithfully flat for all $i$.  Let $I$ be a finitely generated ideal of $S$.  Then there exists a $j\in \Lambda$ and a  finitely generated ideal $J$ of $R_j$ such that $JS=I$.  Since $R_j$ is coherent Gorenstein and quasi-local, $\rGdim_{R_i}R_j/J<\infty$.  Since $S$ is faithfully flat over $R_j$, by Corollary \ref{Gdim-local}, $\rGdim_S S/I<\infty$.  Hence, $S$ is Gorenstein.  Note that the argument also shows that $S$ is coherent.
\end{proof}

As an application, we have the following:

\begin{example}Let $S=k[x_1,x_2,x_3,\dots]/(x_1^2, x_2^2,x_3^2,\dots)$, where $k$ is a field and the $x_i$ are variables.  Then $S\cong \directlimit{}{R_i}$
where for $i\in \mathbb N$,  $R_i=k[x_1,x_2,\dots,x_i]/(x_1^2,x_2^2,\dots, x_i^2)$.  Since each $R_i$ is a coherent Gorenstein ring and the maps $R_i\to R_{i+1}$ are flat for all $i$, $S$ is Gorenstein by Proposition \ref{directlimit}.
\end{example}

It is well-known that a Noetherian local ring is Gorenstein if and only if the ring has finite injective dimension.  This characterization can be generalized to quasi-local coherent rings using the notion of FP-injective dimension as defined by Stenstr\"om \cite{Stenstrom1970a}:

\begin{defn}{\rm Let $M$ be an $R$-module.  The {\it FP-injective dimension} of $M$ is defined by
$$\fpid_RM:=\inf\{n\ge 0\mid \Ext^{n+1}_R(N,M)=0 \text{ for every finitely presented $R$-module }N\},$$  where the infimum of the empty set is defined to be $\infty$.  If $\fpid_RM=0$ we say $M$ is {\it FP-injective}.}
\end{defn}

\begin{prop} \label{fpid} Let $R$ be a coherent ring, $M$ an $R$-module, and $n$ a nonnegative integer.  The following conditions are equivalent:
\begin{enumerate}
\item $\fpid_RM\le n$.
\item $\Ext^i_R(N,M)=0$ for all $i>n$ and all finitely presented $R$-modules $N$.
\item $\Ext^{n+1}_R(R/I,M)=0$ for all finitely generated ideals $I$ of $R$.
\item For every exact sequence
$$0\to M\to E^0\to E^1\to \cdots \to E^{n-1}\to E^n\to 0$$
such that $E^i$ is FP-injective for $0\le i\le n-1$, we have $E^n$ is FP-injective.
\end{enumerate}
\end{prop}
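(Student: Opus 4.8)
The plan is to run the cycle of implications $(1) \Rightarrow (2) \Rightarrow (3) \Rightarrow (1)$ first, and then to fold $(4)$ in by proving $(2) \Rightarrow (4) \Rightarrow (1)$. The only substantive ingredient beyond formal dimension-shifting in $\Ext$ is the fact, valid over a coherent ring, that the class of finitely presented modules is closed under syzygies and under passage to finitely generated submodules (cf. Chapter 2 of \cite{Glaz1989a} and Lemma \ref{fpi}); this is exactly where the coherence hypothesis is used, and it is what makes the inductions go through.

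For $(1) \Rightarrow (2)$, I would fix a finitely presented $N$ together with a presentation $0 \to K \to F \to N \to 0$ with $F$ finitely generated free; by coherence $K$ is again finitely presented, and $\Ext^j_R(N,M) \cong \Ext^{j-1}_R(K,M)$ for $j \ge 2$. An induction on $m \ge n+1$, iterating this isomorphism and applying the hypothesis $\Ext^{n+1}_R(-,M)=0$ to successive (finitely presented) syzygies, yields $\Ext^m_R(N,M)=0$ for all $m > n$. The implication $(2) \Rightarrow (3)$ is immediate, since over a coherent ring $I$ finitely generated forces $R/I$ finitely presented. For $(3) \Rightarrow (1)$, I would induct on the minimal number of generators of a finitely presented module $N$: the cyclic case is $N \cong R/I$ with $I$ finitely generated, covered by $(3)$; in general, letting $N' \subseteq N$ be the submodule spanned by all but one generator, coherence makes $N'$ finitely presented with fewer generators while $N/N'$ is cyclic finitely presented, so the long exact $\Ext$ sequence of $0 \to N' \to N \to N/N' \to 0$ places $\Ext^{n+1}_R(N,M)$ between two terms that vanish by induction.

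With $(1) \Leftrightarrow (2) \Leftrightarrow (3)$ in hand, I note in particular that every FP-injective module $E$ satisfies $\Ext^j_R(N,E)=0$ for all $j \ge 1$ and all finitely presented $N$ (this is $(1)\Rightarrow(2)$ with $n=0$). For $(2) \Rightarrow (4)$, given an exact sequence $0 \to M \to E^0 \to \dots \to E^n \to 0$ with $E^0,\dots,E^{n-1}$ FP-injective, I would split it into short exact sequences $0 \to C^i \to E^i \to C^{i+1} \to 0$ with $C^0 = M$ and $C^n = E^n$, and then dimension-shift: for finitely presented $N$, $\Ext^1_R(N,E^n) \cong \Ext^2_R(N,C^{n-1}) \cong \dots \cong \Ext^{n+1}_R(N,M) = 0$, so $E^n$ is FP-injective. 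For $(4) \Rightarrow (1)$, I would take an injective resolution $0 \to M \to I^0 \to I^1 \to \dots$ (injectives being FP-injective), let $Z^n$ be its $n$th cosyzygy so that $0 \to M \to I^0 \to \dots \to I^{n-1} \to Z^n \to 0$ is exact with the $I^i$ FP-injective, conclude from $(4)$ that $Z^n$ is FP-injective, and then obtain $\Ext^{n+1}_R(N,M) \cong \Ext^1_R(N,Z^n) = 0$ for every finitely presented $N$, i.e.\ $\fpid_R M \le n$.

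I do not anticipate a genuine obstacle: the whole argument is formal once the coherence-based closure properties of finitely presented modules are available. The one point that needs attention is the ordering — one must establish $(1)\Leftrightarrow(2)\Leftrightarrow(3)$ before attacking $(2)\Rightarrow(4)$, so that the vanishing of all higher $\Ext$'s into an FP-injective module is legitimate in the dimension shift; getting that bookkeeping right is the only thing that could trip one up.
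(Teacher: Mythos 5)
Your proof is correct. The paper itself gives no argument for this proposition --- it simply cites Lemma 3.1 of Stenstr\"om's paper on coherent rings and FP-injective modules --- and your dimension-shifting proof is exactly the standard argument behind that reference: the cycle $(1)\Rightarrow(2)\Rightarrow(3)\Rightarrow(1)$ via syzygies and induction on generators, then $(2)\Rightarrow(4)\Rightarrow(1)$ via cosyzygies of an injective resolution. You also invoke coherence precisely where it is genuinely needed (first syzygies of finitely presented modules, and finitely generated submodules of finitely presented modules, remain finitely presented), so the write-up supplies the details the paper leaves to the citation.
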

\begin{proof}  See Lemma 3.1 of \cite{Stenstrom1970a}.
\end{proof}

Combining this result with Proposition \ref{depth-coherent} we have:

\begin{cor} \label{inequality} Let $R$ be a coherent quasi-local ring with maximal ideal $m$ and $M$ an $R$-module such that $mM\neq M$.  Then $\depth M\le \fpid_RM$.
\end{cor}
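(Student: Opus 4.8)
The plan is to show $\depth M \le \fpid_R M$ by unwinding the characterizations of both sides in terms of vanishing of $\Ext$ against cyclic modules $R/I$ with $I$ finitely generated. The key observation is that Proposition \ref{depth-coherent} expresses $\depth M$ as the supremum over finitely generated ideals $I \subseteq m$ of the largest $n$ for which $\Ext^i_R(R/I,M) = 0$ for all $i < n$, while Proposition \ref{fpid} (specifically the equivalence of (1) and (3)) says that $\fpid_R M \le n$ precisely when $\Ext^{n+1}_R(R/I,M) = 0$ for \emph{all} finitely generated ideals $I$ of $R$. So the two quantities are controlled by the same family of $\Ext$ groups, and I just need to compare them.

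First I would dispose of the trivial case: if $\fpid_R M = \infty$ there is nothing to prove, so assume $\fpid_R M = n < \infty$. I want to show $\depth M \le n$, i.e., by Proposition \ref{depth-coherent} it suffices to show that for every finitely generated ideal $I \subseteq m$, it is \emph{not} the case that $\Ext^i_R(R/I,M) = 0$ for all $i < n+1$ — in other words, I need some finitely generated $I \subseteq m$ with $\Ext^i_R(R/I,M) \ne 0$ for some $i \le n$. Equivalently, if for \emph{every} finitely generated $I \subseteq m$ we had $\Ext^i_R(R/I,M)=0$ for all $i \le n$, then $\depth M$ would be at least $n+1$; I must rule this out using $\fpid_R M = n$, which via Proposition \ref{fpid}(3) guarantees at least some finitely generated ideal $I_0$ (not necessarily inside $m$) realizing nonvanishing $\Ext$ in degree exactly $n+1$ — but actually the definition of $\fpid$ as an infimum means $\Ext^{n+1}_R(N,M) \ne 0$ for some finitely presented $N$.

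The cleaner route, which I would actually pursue, is contrapositive: suppose $\depth M = d$ and I wish to show $\fpid_R M \ge d$. If $d = \infty$ this needs the statement that $\fpid$ is also infinite, handled by the same argument pushed to all finite stages; so assume $d$ finite and suppose for contradiction $\fpid_R M = n < d$. By Proposition \ref{fpid}, $\Ext^{i}_R(R/I, M) = 0$ for all $i > n$ and all finitely generated ideals $I$. Pick, via Proposition \ref{depth-coherent}, a finitely generated ideal $I \subseteq m$ with $\Ext^i_R(R/I,M) = 0$ for all $i < d$; combined with the $\fpid$ bound this gives $\Ext^i_R(R/I,M) = 0$ for \emph{all} $i \ge 0$ (using $n < d$ so the ranges $i<d$ and $i>n$ overlap and cover everything, in particular $\Hom_R(R/I,M)=0$). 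But $\Hom_R(R/I,M) = 0$ together with $IM \subseteq mM \ne M$ forces $\depth_I M \ge 1$, yet the real contradiction is that $R/I$ has a projective resolution by finitely generated frees (coherence), and $\Ext^i_R(R/I,M)=0$ for all $i$ means $R\Hom(R/I,M)=0$; applying this with the fact that $R/I \ne 0$ surjects onto $R/m' $ for a maximal $m'$... — so I would instead simply note that $\Hom_R(R/I,M)=0$ with $M \ne 0$, $mM \ne M$ leads via a standard prime-filtration / generator-count argument (as in Lemma \ref{dual=0}) to a contradiction, or more directly invoke that $\depth M \ge d$ already requires $\Ext^i_R(R/I,M)=0$ only for $i<d$, so the overlap forces vanishing in all degrees including producing $\Ext^d_R(R/I,M)=0$ which contradicts the maximality in the supremum defining $\depth M = d$.

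\textbf{Main obstacle.} The subtle point — and where I expect to spend the real effort — is that Proposition \ref{depth-coherent} takes a \emph{supremum} over finitely generated ideals $I \subseteq m$, so a single $I$ witnessing $\depth M = d$ has $\Ext^{<d}$ vanishing but need not have $\Ext^d_R(R/I,M) \ne 0$; meanwhile the $\fpid$ side quantifies \emph{universally} over finitely generated ideals (in $R$, not just in $m$). Reconciling "there exists $I\subseteq m$ with good low-degree vanishing" against "for all $I$, high-degree vanishing" requires care about the degree $d$ itself. The right fix is: if $\depth M \ge d$ then by Lemma \ref{depth-ext} applied along a suitable $I$ we actually need $\fpid$ to accommodate degree $d-1$ and below being flexible; so I would prove the inequality in the sharp form by induction on $\depth M$, peeling off an $M$-regular element after passing to the faithfully flat extension $R[X]_{mR[X]}$ (which preserves coherence and both $\depth$ and $\fpid$, the latter because $\Ext$ commutes with this flat base change on finitely presented first arguments), thereby reducing to the base case $\depth M = 0$, where the claim $\fpid_R M \ge 0$ is vacuous. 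This inductive/flat-extension strategy mirrors the proofs of Lemmas \ref{depth-zero} and \ref{Gclass-depth} and sidesteps the supremum-versus-universal-quantifier mismatch entirely.
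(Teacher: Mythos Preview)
Your direct approach in the second and third paragraphs is the paper's intended argument --- the paper says only ``combining this result with Proposition~\ref{depth-coherent}'' and gives no further details. You correctly reach the point where, assuming $\fpid_R M=n<\infty$ and $\depth M\ge n+1$, some finitely generated ideal $I\subseteq m$ satisfies $\Ext^i_R(R/I,M)=0$ for \emph{all} $i\ge 0$. Your ``contradicts the maximality of the supremum'' does finish the case $\depth M<\infty$ (every $k$ then lies in the set of Proposition~\ref{depth-coherent}, forcing the supremum to be $\infty$ rather than $d$), but your one-line dismissal of the case $\depth M=\infty$ is where the real gap sits: there is no maximality to contradict there. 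The uniform one-line finish you were missing is this: since $R$ is coherent, $R/I$ is $\fpi$, so Lemma~\ref{depth-ext} gives $\depth_I M=\infty$; but if $I=(x_1,\dots,x_s)$ then $H_0(\mathbf{x},M)=M/IM\ne 0$ (as $IM\subseteq mM\ne M$), whence $\depth_I M\le s$ by Proposition~\ref{depth-basic}(2). That is the entire proof, and there is no ``supremum versus universal quantifier'' obstacle to work around.

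Your inductive fallback via $R[X]_{mR[X]}$ has a more serious flaw: polynomial extension does \emph{not} preserve coherence in general (Soublin's example), a point the paper itself acknowledges by imposing coherence of $R[x]$ as a separate hypothesis in its theorem on polynomial extensions. Without coherence of the extension ring you cannot invoke Proposition~\ref{fpid} or Proposition~\ref{depth-coherent} there, so the induction does not get started. (Your claim that $\fpid$ is preserved under this base change is also unjustified: finitely presented modules over $R[X]_{mR[X]}$ need not arise by base change from finitely presented $R$-modules.) Abandon this route and complete the direct argument as above.
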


We now give a characterization of quasi-local Gorenstein rings of finite depth in terms of FP-injective dimension.  (For a proof of this result in the Noetherian case, see Theorem 4.20 of \cite{Auslander1969a}.)

\begin{thm} \label{Gor=fpid} Let $R$ be a quasi-local coherent ring, and $n$ a nonnegative integer.   The following conditions are equivalent:
\begin{enumerate}
\item $\fpid_RR=n$.
\item $R$ is Gorenstein and $\depth R=n$.
\end{enumerate}
\end{thm}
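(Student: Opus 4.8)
The plan is to prove the two implications separately, with essentially all the work lying in $(1)\Rightarrow(2)$. \emph{For $(2)\Rightarrow(1)$:} by Corollary \ref{inequality} we always have $\depth R\le\fpid_R R$, so it suffices to show $\fpid_R R\le n$, which by Proposition \ref{fpid} amounts to proving $\Ext^{n+1}_R(R/I,R)=0$ for every finitely generated ideal $I$. We may assume $I\neq R$, so $I\subseteq m$ and $R/I$ is a nonzero finitely presented module; since $R$ is coherent and Gorenstein, $\rGdim_R(R/I)=\Gdim_R(R/I)<\infty$ by Corollary \ref{G=rGb}. The Auslander--Bridger formula (Theorem \ref{AB}) then gives $\rGdim_R(R/I)=\depth R-\depth_R(R/I)\le n$, so $\Ext^i_R(R/I,R)=0$ for all $i>n$ by Corollary \ref{G=rGa}, and in particular $\Ext^{n+1}_R(R/I,R)=0$.

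\emph{For $(1)\Rightarrow(2)$:} here $\depth R\le\fpid_R R=n$ by Corollary \ref{inequality}. The crux is to show that every finitely presented $R$-module $M$ has $\rGdim_R M\le n$; granting this, every finitely generated ideal has finite G-dimension, so $R$ is Gorenstein, and applying the implication $(2)\Rightarrow(1)$ just proved, with $\depth R$ in place of $n$, gives $\fpid_R R=\depth R$, whence $\depth R=n$.

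To prove the claim, fix a resolution $\cdots\to F_1\to F_0\to M\to0$ of $M$ by finitely generated free modules and let $K$ be its $n$th syzygy when $n\ge1$, or $K=M$ when $n=0$. By coherence $K$ and $K^*$ are finitely presented, hence $\fpi$. Dimension shifting along the resolution gives $\Ext^i_R(K,R)\cong\Ext^{i+n}_R(M,R)=0$ for $i>0$, since $\fpid_R R=n$ (Proposition \ref{fpid}). As $\Ext^i_R(K,R)=0$ for $i>0$, dualizing a finitely generated free resolution $\cdots\to G_1\to G_0\to K\to0$ of $K$ yields an exact sequence $0\to K^*\to G_0^*\to G_1^*\to\cdots$ of finitely presented modules; writing $Z_0=K^*$ and $Z_{j+1}=\im(G_j^*\to G_{j+1}^*)$, the short exact sequences $0\to Z_j\to G_j^*\to Z_{j+1}\to0$ together with another round of dimension shifting give $\Ext^i_R(K^*,R)\cong\Ext^{i+m}_R(Z_m,R)$ for all $i>0$ and $m\ge0$, which vanish once $i+m>n$; in particular $\Ext^i_R(K^*,R)=0$ for $i>0$ and $\Ext^1_R(Z_1,R)=0$. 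Finally, applying $\Hom_R(-,R)$ to $0\to K^*\to G_0^*\to Z_1\to0$ and using $\Ext^1_R(Z_1,R)=0$ shows the natural map $K\to K^{**}$ is surjective; it is injective because $K$ embeds in a free module when $n\ge1$, while when $n=0$ (so $\depth R=0$) its kernel is finitely presented with zero dual, hence zero by Lemma \ref{dual=0}. Thus $K$ is reflexive, $K\in\rG(R)$, and $\rGdim_R M\le n$ follows from Proposition \ref{GDProp}.

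The step I expect to be the main obstacle is verifying the reflexivity of the syzygy $K$, i.e.\ condition (3) in the definition of the G-class: the Ext-vanishing conditions drop out mechanically from $\fpid_R R=n$ by repeated dimension shifting, but controlling the kernel and cokernel of $K\to K^{**}$ requires carefully dualizing the coresolution of $K^*$, and in the edge case $n=0$ it must fall back on the depth-zero machinery of Section 4 (Lemma \ref{dual=0} and $\depth R=0$).
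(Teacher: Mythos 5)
Your overall strategy is essentially the paper's: reduce to showing that the $n$th syzygy $K$ of a finite free resolution lies in $\rG(R)$, obtain $\Ext^{i}_R(K,R)=0$ and $\Ext^{i}_R(K^*,R)=0$ for $i>0$ by dimension shifting along the dualized resolution (your $Z_1$ is exactly $L^*$ for $L$ the first syzygy of $K$, so your $\Ext^1_R(Z_1,R)=0$ is the paper's $\Ext^1_R(L^*,R)=0$), and deduce surjectivity of $\theta_K\colon K\to K^{**}$; your $(2)\Rightarrow(1)$ via $\depth R\le \fpid_RR$ is a harmless variant of the paper's, which instead exhibits a nonvanishing $\Ext^n_R(R/I,R)$ via Proposition \ref{depth-coherent}. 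One citation slip there: finiteness of $\rGdim_R(R/I)$ comes from the Gorenstein hypothesis via Proposition \ref{Gor-fp} (or Proposition \ref{Horseshoe}); Corollary \ref{G=rGb} only gives the equality $\Gdim=\rGdim$.

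The genuine gap is the injectivity of $\theta_K$ in the case $n=0$. You assert that the kernel $T$ of $M\to M^{**}$ ``is finitely presented with zero dual,'' but $T^*=0$ is not a general fact and you give no argument: over $R=k[x,y]/(x,y)^2$ (quasi-local, coherent, $\depth R=0$) with $M=R/(x)$, the kernel of $\theta_M$ is the copy of $k$ generated by the class of $y$, and its dual is the socle $\cong k^2\neq 0$. So the claim really uses $\fpid_RR=0$ and must be proved. The repair is short with the tools you already have: $M/T\cong \operatorname{im}\theta_M$ is finitely presented by coherence, so dualizing $0\to T\to M\to M/T\to 0$ and using $\Ext^1_R(M/T,R)=0$ (as $\fpid_RR=0$) shows the restriction map $M^*\to T^*$ is surjective; but every element of $M^*$ vanishes on $T$ by the very definition of $T$, so that map is zero and hence $T^*=0$, after which Lemma \ref{dual=0} applies as you intended. (Equivalently, since you have already shown $\theta_M$ surjective, dualize $0\to T\to M\to M^{**}\to 0$ and use that $(\theta_M)^*$ is split surjective together with $\Ext^1_R(M^{**},R)=0$.) Note that the paper avoids the case split $n\ge 1$ versus $n=0$ altogether: it proves surjectivity of both $\theta_N$ and $\theta_L$ (with $L$ the first syzygy), checks exactness of $0\to L^{**}\to G_0^{**}\to N^{**}\to 0$, and gets injectivity from the snake lemma, which you may find cleaner than your two-case treatment.
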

\begin{proof} Suppose $R$ is Gorenstein and $\depth R=n$.  Let $M$ be a finitely presented $R$-module.  By Theorem \ref{AB}, $\rGdim_RM\le n$ and hence $\Ext^i_R(M,R)=0$ for all $i>n$ by Proposition \ref{GDProp}.   On the other hand, $\Ext^n_R(R/I,R)\neq 0$
for some finitely generated ideal $I$ of $R$ by Proposition \ref{depth-coherent}.  Therefore, $\fpid_RR=n$.

Conversely, suppose $\fpid_RR=n$.  By Corollary \ref{inequality}, $\depth R=m\le n$.    If  $R$ is Gorenstein then $m=n$ as  we have shown (2) implies (1).    Let $M$ be a finitely presented $R$-module and consider
$$0\to K\to F_{n-1}\to \cdots \to F_1\to F_0\to M\to 0$$
where each $F_i$ is a finitely generated free $R$-module.  Since $\Ext^i_R(M,R)=0$ for all $i>n$, it is easily seen that $\Ext^i_R(K,R)=0$ for all $i>0$.  It suffices to show that $K$ is in $\rG(R)$.  In fact, we will show that for any finitely presented $R$-module $N$ such that $\Ext^i_R(N,R)=0$ for all $i>0$, one has $N\in \rG(R)$.  Let
$$0\to C\to G_{n-1}\to G_{n-2}\to \cdots \to G_1\to G_0\to N\to 0$$
be exact where $G_i$ is a finitely generated free $R$-module for all $i$.   By Lemma \ref{longexactseq}  the sequence
$$0\to N^*\to G_0^*\to G_1^*\to \cdots \to G_{n-1}^*\to C^*\to 0$$
is exact.
As $C^*$ is finitely presented (coherence) and $\fpid_RR=n$, $\Ext^i_R(C^*,R)=0$ for $i>n$.  Hence (by the same argument as for $K$ above),
$\Ext^i_R(N^*,R)=0$ for $i>0$.  Now consider the short exact sequence $0\to L\to G_0\to N\to 0$.   Then $\Ext^i_R(L,R)=0$ for all $i>0$ and hence $\Ext^i_R(L^*,R)=0$ for $i>0$ as well (using $L$ in place of $N$).  Applying $\Hom_R(-,R)$ twice, one obtains the exactness of $$0\to L^{**}\to G_0^{**}\to N^{**}\to 0.$$  Since the canonical map $G_0\to G_{0}^{**}$ is an isomorphism, the canonical  map $N\to N^{**}$ is surjective.  With $L$ in place of $N$, we also obtain the map $L\to L^{**}$ is surjective.  Hence, the map $N\to N^{**}$ is an isomorphism (by the snake lemma) and $N$ is in $\rG(R)$. Thus, $R$ is Gorenstein.
\end{proof}

We remark that a version of Theorem \ref{Gor=fpid} (without reference to depth) was proved in \cite[Theorem 7]{Ding1996a} using very different methods.  We also note that a polynomial ring over a field in infinitely many variables localized at a maximal ideal is a coherent quasi-local regular (hence Gorenstein) ring with infinite FP-injective dimension.

Another equivalent characterization for a local ring $R$ to be Gorenstein is that $R$ be Cohen-Macaulay and some (equivalently, every) system of parameters generate an irreducible ideal.  We have already seen that coherent Gorenstein rings are Cohen-Macaulay.  Additionally, we have the following:

\begin{prop} Let $R$ be a quasi-local coherent  Gorenstein ring with $\depth R=n<\infty$.  Then every $n$-generated ideal generated by a  regular sequence is irreducible.   \end{prop}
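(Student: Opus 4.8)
The plan is to pass to the ring $S:=R/(x_1,\dots,x_n)$ and read off irreducibility from its self-FP-injectivity. Since $x_1,\dots,x_n$ is a regular sequence on $R$ we have $(x_1,\dots,x_n)R\neq R$, so each $x_i\in m$; and since a quotient of a coherent ring by a finitely generated ideal is again coherent (\cite{Glaz1989a}), $S$ is a coherent quasi-local ring with maximal ideal $m/(x_1,\dots,x_n)$. Applying Theorem~\ref{Gor-modx} $n$ times shows $S$ is Gorenstein, and applying part (7) of Proposition~\ref{depth-basic} $n$ times gives $\depth S=\depth R-n=0$. Hence $\fpid_SS=0$ by Theorem~\ref{Gor=fpid}; in particular $\Ext^1_S(N,S)=0$ for every finitely presented $S$-module $N$. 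Since an ideal $I$ of $R$ is irreducible exactly when $(0)$ is an irreducible ideal of $R/I$, it now suffices to prove that $(0)$ is irreducible in $S$.

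Suppose instead that $J_1\cap J_2=0$ for some nonzero ideals $J_1,J_2$ of $S$. Choose $0\neq a_i\in J_i$; then $Sa_1\cap Sa_2\subseteq J_1\cap J_2=0$, so the ideal $Sa_1+Sa_2$ is the internal direct sum $Sa_1\oplus Sa_2$. Put $C=S/(Sa_1\oplus Sa_2)$ and apply $\Hom_S(-,S)$ to the exact sequence $0\to Sa_1\oplus Sa_2\to S\to C\to 0$. Because $C$ is finitely presented over $S$ and $\fpid_SS=0$, we get $\Ext^1_S(C,S)=0$, so the restriction map
$$S=\Hom_S(S,S)\to \Hom_S(Sa_1,S)\oplus \Hom_S(Sa_2,S)$$
is surjective. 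Lifting the element whose first coordinate is the inclusion $Sa_1\hookrightarrow S$ and whose second coordinate is the zero map, we obtain $s\in S$ with $sa_1=a_1$ and $sa_2=0$. Then $1-s\in(0:_Sa_1)$ and $s\in(0:_Sa_2)$, and since $a_1$ and $a_2$ are nonzero these annihilator ideals are proper, hence contained in the maximal ideal of $S$. But then $1=(1-s)+s$ lies in the maximal ideal of $S$, which is absurd. Therefore $(0)$ is irreducible in $S$, and so $(x_1,\dots,x_n)$ is irreducible in $R$.

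The genuinely load-bearing step is the surjectivity of that restriction map, which is precisely where self-FP-injectivity of $S$ is used; the rest is bookkeeping, the one subtle point being that quasi-locality of $S$ is what puts the proper ideals $(0:_Sa_i)$ inside the maximal ideal. The main thing to verify carefully is $\fpid_SS=0$: coherence must be checked to propagate through the successive quotients, and the depth must be seen to drop by exactly $n$. An alternative that bypasses Theorem~\ref{Gor-modx} and the depth computation is to establish $\fpid_SS=0$ directly: every finitely generated ideal $L$ of $S$ gives a finitely presented $R$-module $S/L$ annihilated by $(x_1,\dots,x_n)$, and iterating the change-of-rings isomorphism $\Ext^{i+1}_R(-,R)\cong\Ext^i_{R/(x_1)}(-,R/(x_1))$ of \cite[Lemma 18.2]{Matsumura1986a} gives $\Ext^1_S(S/L,S)\cong\Ext^{n+1}_R(S/L,R)$, which vanishes since $\fpid_RR=n$ by Theorem~\ref{Gor=fpid}.
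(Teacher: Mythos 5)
Your proof is correct and takes essentially the same route as the paper: reduce modulo the regular sequence to the depth-zero case (the paper compresses this into one sentence), use the Gorenstein hypothesis to obtain $\Ext^1$ of the finitely presented cyclic module $S/(a_1,a_2)$ into $S$ equal to zero (you quote Theorem \ref{Gor=fpid}, the paper uses the same vanishing directly), dualize a short exact sequence, and conclude from quasi-locality that one of the annihilators is the whole ring. The only cosmetic difference is which sequence gets dualized: the paper uses $0\to R/((x)\cap(y))\to R/(x)\oplus R/(y)\to R/(x,y)\to 0$ to read off $(0:_Rx)+(0:_Ry)=R$ at once, while you dualize $0\to Sa_1\oplus Sa_2\to S\to C\to 0$ and extract the element $s$ with $sa_1=a_1$, $sa_2=0$ --- the same mechanism in different bookkeeping.
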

\begin{proof}  Since the Gorenstein property is preserved modulo a regular sequence, it suffices to prove the case $n=0$.   Suppose $(x)\cap (y)=(0)$ for some $x,y\in R$.  Then $(0:_R(x)\cap (y))=R$.  Consider the exact sequence
$$0\to R/((x)\cap (y))\to R/(x)\oplus R/(y)\to R/(x,y)\to 0.$$
Since $R$ is coherent Gorenstein of depth zero, $\Ext^1_R(R/(x,y),R)=0$.  Hence, applying $\Hom_R(-,R)$ we obtain
the exactness of
$$0\to (0:_R(x,y))\to (0:_Rx)\oplus (0:_Ry)\to (0:_R(x)\cap (y))\to 0.$$
Thus, $R=(0:_R(x)\cap (y))=(0:_Rx)+(0:_Ry)$.  As $R$ is quasi-local, this implies $x=0$ or $y=0$.
\end{proof}

We remark that in the above proposition, regular sequences of length $n=\depth R$ may not exist.  However, one can pass to a faithfully flat extension (assuming coherence is preserved) to obtain such  sequences.

\bibliography{citeGorenstein}

\end{document}